\documentclass[12pt]{amsart}
\usepackage{amssymb,amsmath,amscd,graphicx,amsthm,enumerate,latexsym, subfig}
\parskip=1ex
\pdfoutput=1
\usepackage[all]{xy} 
\usepackage[colorlinks]{hyperref}

\setlength{\textheight}{595pt}
\addtolength{\voffset}{-10pt}
\addtolength{\textheight}{55pt}
\addtolength{\textwidth}{35pt}
\addtolength{\evensidemargin}{-40pt}
\addtolength{\headsep}{10pt}

\headheight=0pt \topmargin= 0cm \textheight=21.3cm
\textwidth=450pt \oddsidemargin=0pt \evensidemargin=0pt

\newtheorem{theorem}{Theorem}[section]
\newtheorem{corollary}[theorem]{Corollary}
\newtheorem{lemma}[theorem]{Lemma}
\newtheorem{proposition}[theorem]{Proposition}

\theoremstyle{definition}
\newtheorem{definition}[theorem]{Definition}
\newtheorem{remark}[theorem]{Remark}
\newtheorem{claim}{Claim}

\numberwithin{equation}{section}

\theoremstyle{remark}
\newtheorem{example}[theorem]{Example}

\newcommand{\C}{\mathbb{\Bbbk}}

\newcommand{\Rep}{\operatorname{Rep}}
\newcommand{\N}{\mathbb{N}}

\newcommand{\Ext}{\operatorname{Ext}}

\newcommand{\GL}{\textrm{GL}}

\newcommand{\arr}[1]{\ar@{~}[#1] }
\newxyColor{pink}{1.0 0.1 1.0}{rgb}{}
\newxyColor{cyan}{0 0.1 1.0}{rgb}{}
\newxyColor{orange}{1 .7 .2}{rgb}{}

\newcommand{\rank}{\textrm{rank}_k}
\newcommand{\im}{\operatorname{im}}
\newcommand{\Hom}{\operatorname{Hom}}

\newcommand{\X}{\mathfrak{X}}
\newcommand{\EXT}{\ensuremath{\operatorname{EXT}}}
\newcommand{\LCOR}{\operatorname{S^2}}
\newcommand{\RCOR}{\operatorname{T^2}}
\newcommand{\LEND}{\operatorname{S^1}}
\newcommand{\REND}{\operatorname{T^1}}
\newcommand{\ISO}{\operatorname{ISO}}
\renewcommand{\EXT}{\ensuremath{\mathbb{EXT}}}

\thispagestyle{empty}
\title[Generic Modules for Gentle String Algebras]{Generic Modules for Gentle String Algebras}
\author{Andrew T. Carroll}
\address{Department of Mathematics,
		Northeastern University, 
		Boston, MA 02115}
\email{ carroll.a@husky.neu.edu}
\thanks{The author was partially supported by NSF grant number DMS-0801220.}
\date{}
\begin{document}
\begin{abstract}
We describe the generic modules in each component of the space of $\beta$-dimensional representations of certain string algebras.  In so doing, we calculate dimensions of higher self-extension groups $\Ext^i(M, M)$ for generic modules $M$.  This algorithm lends itself for use in determining tilting modules over gentle string algebras.  
\end{abstract}
\maketitle

\tableofcontents
\section*{Introduction}
Let $\Rep_Q(\beta)$ be the variety of $\beta$-dimensional representations of $Q$.  Kac posed the following question in \cite{Kac}: does there exist an open set $U\subset \Rep_Q(\beta)$ and a decomposition $\beta=\beta(1)+\dotsc+\beta(s)$ such that for each representation $V\in U$, $V=V(1)\oplus \dotsc \oplus V(s)$ with $V(i)$ indecomposable of dimension $\beta(i)$ for each $i$?  Such a decomposition of $\beta$ is referred to as the \emph{canonical decomposition} of $\beta$, and representations in the set $U$ are called \emph{generic}.  Later, Schofield \cite{Schofield} and Derksen-Weyman \cite{DW} gave independent algorithms for determining the canonical decomposition of a given dimension vector for any quiver $Q$, although descriptions of the open set $U$ are, for the most part, still unavailable.  

For quivers with relations, the situation is more intricate.  Representation varieties need not be irreducible, so one needs to consider the canonical decomposition of a dimension vector and describe the generic representations with respect to a given irreducible component.  While it is generally difficult to describe the irreducible components of representation varieties, this problem can be solved for a certain class of zero-relation algebras by relating the representation varieties to varieties of complexes.  

In this article, we give an algorithm to describe the set $U$ in irreducible components of representation varieties for gentle string algebras.  As in \cite{CW}, we determine the irreducible components of these algebras by viewing the representation varieties as products of varieties of complexes, studied by DeConicini and Strickland \cite{DCS}.  This work is a generalization of Kra\'{s}kiewicz and Weyman (\cite{KW}), in which the generic modules for the algebras $A(n)$ are constructed. 

Gentle string algebras have recently seen a resurgence in popularity.  They are an important class of algebras whose representation theory has been very well described (see \cite{AS},\cite{BR}).  More recently, they have appeared in connection with cluster algebras arising from surfaces.  From this point of view, modules without self-extension play an important role (see \cite{ABCP}, \cite{CCS}, \cite{CI}).  

In \cite{Kac}, it is shown that for a quiver $Q$ and dimension vector $\beta$, the decomposition $\beta=\beta(1)+\dotsc+\beta(s)$ is the canonical decomposition of $\beta$ if and only if $\beta(i)$ are Schur roots (i.e., the generic module is indecomposable) and there are no extensions between the generic modules of dimensions $\beta(i)$ and $\beta(j)$ for $i\neq j$.  This result (with some modifications, recalled in section \ref{sec:UDM}) was extended to module varieties of finite dimensional associative algebras by Crawley-Boevey and Schr\"{o}er in \cite{CBS}.  Furthermore (see \cite{G}, \cite{Voigt}), if a $\beta$-dimensional module admits no self extensions, then its $\GL(\beta)$ orbit is open in its irreducible component.  Thus the criterion used to determine the generic modules has interesting connections with tilting theory.  


The author would like to thank Jerzy Weyman for many helpful insights in the preparation of this article.  In addition, discussions with Ryan Kinser and Kavita Sutar proved very helpful.  

\section{Preliminary Definitions}\label{sec:preliminaries}

Fix an algebraically closed field $\C$.  A quiver $Q=(Q_0, Q_1)$ is a pair consisting of a set of vertices $Q_0$ and a set of arrows $Q_1$.  We denote by $ta$ (resp. $ha$) the tail (resp. head) of the arrow $a$.  A path $p$ in $Q$ is a sequence of arrows $a_s a_{s-1} \dotsc a_1$ such that $ha_i = ta_{i+1}$ for $i=1,\dotsc, s-1$.  We recall the definition of a gentle string algebra (which, in contrast to the original definition, we do not assume to be acyclic).

\begin{definition}
A finite-dimensional $\C$-algebra $A$ is called a \emph{gentle string algebra} if it admits a presentation $\C Q/I$ satisfying the following properties:
\begin{itemize}
\item[i.] each vertex is the head of at most two arrows, and the tail of at most two arrows;
\item[ii.] for each arrow $b\in Q_1$ there is at most one arrow $a$ with $ta=hb$ and at most one arrow $c$ with $hc=tb$ such that $ab\notin I$ and $bc\notin I$;
\item[iii.] for each arrow $b\in Q_1$, there is at most one arrow $a$ with $ta=hb$ and at most one arrow $c$ with $hc=tb$ such that $ab\in I$ and $bc \in I$;
\item[iv.] $I$ is generated by paths of length 2.  
\end{itemize}
\end{definition}

A coloring of a quiver $Q$ is a map $c: Q_1\rightarrow S$ with $S$ a finite set such that $c^{-1}(s)$ is a directed path for each $s\in S$.  The elements of $S$ will be called colors.  For a coloring $c$ of a quiver, define by $I_c$ the ideal $I_c=< ba \mid ha=tb \textrm{ and } c(a)=c(b)>$ (these are monochromatic paths of length two).  We will say that a quiver with relations $\C Q/I$ admits a coloring if $I=I_c$ for some coloring $c$ of $Q$.  Not all zero-relation algebras admit colorings, but the following rather simple result is shown in \cite{CW}.

\begin{proposition}
If $Q$ is acyclic, and $\C Q/I$ is a gentle string algebra then there is a coloring $c$ of $Q$ such that $I=I_c$.
\end{proposition}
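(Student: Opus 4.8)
The plan is to produce the coloring $c$ directly from the combinatorics of the arrows, exploiting the fact that the defining relations of a gentle string algebra control exactly which length-two paths are zero. Since $Q$ is acyclic, I want to first reduce the construction to a purely local matching problem at each vertex, and then globalize.

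First I would set up the local structure. Fix a vertex $v\in Q_0$. By condition (i), at most two arrows have head $v$ and at most two have tail $v$. Consider the bipartite-type relation on the arrows incident to $v$: for an arrow $b$ with $hb = v$ and an arrow $a$ with $ta = v$, say $b \sim_v a$ if $ab\notin I$. Conditions (ii) and (iii) say precisely that this relation is a partial matching — each $b$ is related to at most one such $a$ and vice versa — and moreover, when there are two arrows in and two arrows out at $v$, the complementary pairing (the one sending each $b$ to the $a$ it is \emph{not} matched with) consists of relations in $I$ (this is where condition (iii), paired with (ii), pins down the structure: the ``non-composable'' pairs are forced to be zero). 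So at each vertex the set of length-two paths through $v$ splits into those not in $I$ (a matching) and those in $I$, and $I$ is generated by the latter over all $v$ by condition (iv).

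Next I would build the coloring by declaring a color to be a maximal chain of arrows $a_1, a_2, \dots, a_t$ with $h a_i = t a_{i+1}$ and $a_{i+1} a_i \notin I$ for each $i$; that is, following the local matching forward and backward. Because at each vertex the matching $\sim_v$ pairs up at most one incoming with at most one outgoing arrow, these chains are well-defined and the arrow set $Q_1$ is partitioned into such maximal chains. I must check: (a) each such chain is a directed path — this needs that no arrow repeats, which follows from acyclicity of $Q$, since a repeated arrow would force a cyclic subquiver; (b) distinct chains are disjoint and cover $Q_1$ — immediate from the matching being a partial bijection at every vertex; (c) $I_c = I$ — by construction, $ba \in I_c$ iff $a,b$ lie in the same chain and are consecutive ``wrongly'', but since within a chain consecutive arrows satisfy $ba\notin I$, the monochromatic length-two paths in $I_c$ are exactly the $ba$ with $hb=ta$, $c(a)=c(b)$, and $ba$ \emph{not} a consecutive pair of the chain; one then verifies using (iii) that these are exactly the generators of $I$, and conversely any generator $ba\in I$ of $I$ has $a,b$ forced into the same chain by the local analysis. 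Define $c: Q_1\to S$ sending each arrow to the chain (color) containing it; each $c^{-1}(s)$ is a directed path by (a), so $c$ is a coloring, and $I=I_c$ by (c).

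The main obstacle I anticipate is step (c), and specifically verifying that $I_c \subseteq I$: a priori a chain could wrap so that two arrows $a, b$ get the same color with $hb = ta$ but $ba$ sitting ``inside'' the chain rather than at a consecutive step — this is exactly where acyclicity is essential, ruling out that a color (a directed path) could have its head meet its own tail or an interior vertex. I would handle this by arguing that in an acyclic quiver a directed path visits each vertex at most once, so for a color $s$ the only length-two monochromatic paths are the consecutive ones (which are \emph{not} in $I_c$ by definition) — hence $I_c$ is generated exactly by the non-consecutive same-color pairs, and a careful bookkeeping with (ii) and (iii) matches these against the length-two generators of $I$. The reverse inclusion $I\subseteq I_c$ is the more routine direction: any generator $ba\in I$ forces $b\not\sim_{ha} a$, and then condition (iii) guarantees the \emph{other} incoming/outgoing pair at that vertex is also zero, so the matching puts $a$ and $b$ into a common chain, giving $c(a)=c(b)$ and $ba\in I_c$.
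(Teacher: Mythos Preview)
Your construction has the matching relation backwards, and this is a genuine error rather than a cosmetic slip. You define a color to be a maximal chain $a_1,\dots,a_t$ with $a_{i+1}a_i\notin I$. But by definition $I_c$ is generated by the \emph{monochromatic} length-two paths, and---as you yourself observe---in an acyclic quiver a directed path visits each vertex once, so the only monochromatic length-two paths are the consecutive pairs $a_{i+1}a_i$. With your convention these all lie \emph{outside} $I$, so every generator of $I_c$ fails to lie in $I$; hence $I_c\not\subseteq I$ whenever any chain has length $\geq 2$. Concretely, for $1\xrightarrow{a}2\xrightarrow{b}3$ with $I=\langle ba\rangle$, your chains put $a$ and $b$ in different colors, giving $I_c=0\neq I$. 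Your step (c) compounds this by asserting that consecutive monochromatic pairs are ``not in $I_c$ by definition'' and that $I_c$ is generated by the non-consecutive same-color pairs; the first claim contradicts the definition of $I_c$, and the second set is empty by your own acyclicity argument.

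The fix is to swap the relation: declare a color to be a maximal chain with $a_{i+1}a_i\in I$ for each $i$. Condition (iii) is exactly what makes this a well-defined partial matching at each vertex (at most one forward and one backward extension), acyclicity ensures each chain is a genuine directed path, and now the generators of $I_c$ are the consecutive pairs $a_{i+1}a_i$, all in $I$ by construction, giving $I_c\subseteq I$. For the reverse inclusion, any length-two generator $ba\in I$ has $b$ as the unique forward extension of $a$ along the $\in I$ relation (uniqueness from (iii)), so $a$ and $b$ lie in the same chain and $ba\in I_c$. The paper does not give a proof here, citing \cite{CW}, but this corrected version of your argument is the natural one.
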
 

%

In this article, we consider gentle string algebras $\C Q/I$ admitting colorings.  

\subsection{Representation Spaces}

Recall that for a dimension vector $\beta \in \N^{Q_0}$ the variety of representations of $\C Q/I$ is given by 
\[
	\Rep_{\C Q/I}(\beta) =\left\{ (V_a)_{a\in Q_1} \in \prod\limits_{a\in Q_1} \Hom_k(k^{\beta_{ta}}, k^{\beta_{ha}}) \mid \sum\limits_{p\in \C Q_1} a_p V(p) = 0 \textrm{ whenever } \sum\limits_{p\in \C Q_1} a_p p \in I\right\}
	\]
where $V(p)$ is the composition of the maps corresponding to the arrows in the path $p$.  The algebraic group $\GL(\beta)=\prod_{i=1}^{n+1} \GL(\beta_i)$ acts linearly on $\Rep_{\C Q/I}(\beta)$, and orbits of this action correspond to isoclasses of modules. 

If there are no relations, then the above variety is simply an affine space.  Otherwise, $\Rep_{\C Q/I}$ need not be irreducible.  In case the algebra does admit a coloring, the irreducible components can be explicitly described by extending results of DeConcini-Strickland \cite{DCS}.  Irreducible components are parametrized by \emph{rank sequences}, defined below.

\begin{definition}
Suppose that $c$ is a coloring of $Q$, and $\beta$ is a dimension vector.  A map $r: Q_1 \rightarrow \N$ is called a \emph{rank map} for $\beta$ (with respect to the coloring $c$) if for each path $a_2a_1$ with $c(a_2)=c(a_1)$, we have $r(a_2)+r(a_1)\leq \beta_{ha_1}.$  A rank map is called \emph{maximal} if it is so under the partial ordering given by $r\leq r'$ if $r(a)\leq r'(a)$ for all $a\in Q_1$.
\end{definition}

Denote by $\Rep_{\C Q/I_c}(\beta, r)$ the set of representations $(V_a)_{a\in Q_1} \in \Rep_{\C Q/I_c}(\beta)$ such that $\rank V_a \leq r(a)$ for $a\in Q_1$.  Notice that if $(V_a)_{a\in Q_1} \in \Rep_{\C Q/I_c}(\beta)$ is a representation of dimension vector $\beta$, then the function $a \mapsto \rank V_a$ is a rank map, and certainly an invariant for the action of $\GL(\beta)$.  In fact, we have the following which is proven in \cite{CW}.
\begin{proposition}[\cite{CW},\cite{DCS}]\label{prop:irrcomponents}
Suppose that $c$ is a coloring of $Q$ such that $\C Q/I_c$ is a gentle string algebra, and let $\beta$ be a dimension vector.  Each irreducible component of $\Rep_{\C Q/I_c}(\beta)$ is of the form $\Rep_{Q, c}(\beta, r)$ where $r$ is a maximal rank map for $\beta$.  
\end{proposition}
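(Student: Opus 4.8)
The plan is to reduce the statement to the theorem of DeConcini--Strickland on varieties of complexes, exploiting the coloring to decompose $\Rep_{\C Q/I_c}(\beta)$ as a product of such varieties. First I would observe that, because $I_c$ is generated by monochromatic length-two paths, the defining equations of $\Rep_{\C Q/I_c}(\beta)$ couple only arrows of the same color. Concretely, fixing a color $s\in S$, the preimage $c^{-1}(s)$ is a directed path (or cycle) $p_s\colon \cdots \to a_2 \to a_1$, and the conditions $V_{a_{i+1}}V_{a_i}=0$ are exactly the equations cutting out a \emph{variety of complexes} in $\prod_{a\in c^{-1}(s)}\Hom_\C(\C^{\beta_{ta}},\C^{\beta_{ha}})$. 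Since distinct colors share no relations, one gets a $\GL(\beta)$-equivariant isomorphism
\[
	\Rep_{\C Q/I_c}(\beta)\;\cong\;\prod_{s\in S} \mathcal{C}_s(\beta),
\]
where $\mathcal{C}_s(\beta)$ is the variety of complexes attached to the path $c^{-1}(s)$ with the vector-space dimensions prescribed by $\beta$. (One must handle the case where $c^{-1}(s)$ is a cycle: gentleness forces such a cyclic monochromatic path to have a composite lying in $I_c$, so the complex has a ``periodicity'' boundary condition; this is precisely the setting already treated in \cite{CW}, so I would cite that rather than reprove it.)

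Next I would invoke the DeConcini--Strickland description of the irreducible components of a variety of complexes: the irreducible components of $\mathcal{C}_s(\beta)$ are indexed by the admissible rank vectors $(r(a))_{a\in c^{-1}(s)}$ that are maximal subject to the rank inequalities forced by the composability conditions, namely $r(a_{i+1})+r(a_i)\le \beta_{ha_i}$ for each monochromatic composable pair — and the component indexed by such a maximal rank vector is the closure of the locus where each $V_a$ has rank exactly $r(a)$. These are exactly the conditions packaged in the definition of a \emph{rank map} for $\beta$ with respect to $c$, restricted to the color $s$. Taking the product over all colors, and using that the irreducible components of a product of varieties are the products of irreducible components of the factors, I conclude that the irreducible components of $\Rep_{\C Q/I_c}(\beta)$ are exactly the subsets $\Rep_{Q,c}(\beta,r)=\{(V_a): \rank V_a\le r(a)\ \forall a\}$ for $r$ a maximal rank map, since $r$ is maximal for $\beta$ if and only if each of its monochromatic restrictions is maximal for the corresponding variety of complexes.

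The main obstacle, and the only place where real work is needed beyond bookkeeping, is verifying the product decomposition carefully — i.e. checking that there are genuinely \emph{no} relations in $I_c$ linking arrows of different colors, and that the cyclic-color case does not introduce cross-color constraints or change the form of the component description. Here gentleness is essential: conditions (i)--(iii) in the definition guarantee that at each vertex the two incoming (resp. outgoing) arrows are handled coherently and that the relations coming through a vertex are exactly the monochromatic ones, so the coupling really is color-by-color. I would spell this out at the level of the defining ideal, note that the remaining content — the component structure of an individual variety of complexes (acyclic or periodic) — is \cite{DCS} together with the cyclic extension in \cite{CW}, and then assemble the pieces. The translation between ``maximal rank vector for a variety of complexes'' and ``maximal rank map for $\beta$'' is then purely a matter of matching definitions, and the statement follows.
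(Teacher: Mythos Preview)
Your proposal is correct and is exactly the argument the paper has in mind: the proposition is not proved in the paper but cited from \cite{CW} and \cite{DCS}, and the product decomposition $\Rep_{\C Q/I_c}(\beta)\cong\prod_{s\in S}\operatorname{Com}(\beta,r,s)$ into varieties of complexes, followed by the DeConcini--Strickland component description, is precisely the content of those references (indeed the paper invokes this same decomposition explicitly later, in the proof of the Claim inside Corollary~\ref{cor:bandcomponents}). One small correction: for the algebras considered here the monochromatic subquiver $c^{-1}(s)$ is always a genuine directed path, never a cycle, so your cyclic-color caveat is unnecessary.
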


By abuse of notation, we will say the pair $(Q, c)$ is a gentle string algebra if $\C Q/I_c$ is.


\section{The Up-and-Down Graph}\label{sec:UDGraph}
In this section, we construct a graph for each irreducible component of $\Rep_{Q, c}(\beta)$ when $(Q, c)$ is a gentle string algebra.  In section \ref{sec:UDM} we will construct a module from each such graph.

Denote by $\X\subset Q_0 \times S$ the set of pairs $(x, s)$ such that there is an arrow $a$ of color $s$ incident to the vertex $x$.  We define a sign function, which will dictate how the graph is constructed.
\begin{definition}
A \emph{sign function} on $(Q,c)$ is a map $\epsilon: \X \rightarrow \{\pm 1\}$ such that if $(x,s_1),(x,s_2)$ are distinct elements in $\X$, then $\epsilon(x,s_1)=-\epsilon(x,s_2)$.  
\end{definition}
The following lemma is not used in the remainder of the article, but is recorded here for completeness.
\begin{lemma}\label{note:signfunction}
If there are no isolated vertices in $Q$, then there are $2^{\lvert Q_0 \rvert}$ sign functions on $(Q, c)$.
\end{lemma}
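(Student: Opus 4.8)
The plan is to establish the bound $1\le\card{\X_x}\le 2$ for every vertex $x$, where $\X_x:=\{s\in S\mid (x,s)\in\X\}$ is the set of colours incident to $x$, and then to note that the defining condition of a sign function is purely local at each vertex, so that the total number of sign functions factors as a product over $Q_0$ of local counts, each equal to $2$. The lower bound is immediate: since $x$ is not isolated it is incident to some arrow $a$, and then $c(a)\in\X_x$.

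\textbf{Step 1 (the bound $\card{\X_x}\le 2$).} Write $\inna x$ (resp.\ $\outa x$) for the set of arrows with head $x$ (resp.\ tail $x$); by axiom (i), $\card{\inna x}\le 2$ and $\card{\outa x}\le 2$. Suppose first that $\card{\outa x}=2$, say $\outa x=\{b_1,b_2\}$, and let $a\in\inna x$. Since $ha=x=tb_1=tb_2$, both $b_1a$ and $b_2a$ are length-two paths through $x$ (an arrow into $x$ followed by an arrow out of $x$). By axiom (ii), at most one of $b_1a,b_2a$ lies outside $I_c$; by axiom (iii), at most one lies inside $I_c$; since there are exactly two such paths, exactly one of them lies in $I_c$. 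As $I_c$ is generated by monochromatic paths of length two, $b_ja\in I_c$ iff $c(b_j)=c(a)$, so $c(a)\in\{c(b_1),c(b_2)\}$. Since every colour incident to $x$ is either $c(a)$ for some $a\in\inna x$ or $c(b_1)$ or $c(b_2)$, we conclude $\X_x\subseteq\{c(b_1),c(b_2)\}$, so $\card{\X_x}\le 2$. The case $\card{\inna x}=2$ is symmetric, using the parts of axioms (ii) and (iii) that bound the arrows into a vertex. If instead $\card{\inna x}\le 1$ and $\card{\outa x}\le 1$, then $x$ has at most two incident arrows and $\card{\X_x}\le 2$ trivially. (A loop at $x$ belongs to $\inna x\cap\outa x$ and is absorbed into this case analysis without change.)

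\textbf{Step 2 (counting).} Since $\X=\bigsqcup_{x\in Q_0}\{x\}\times\X_x$, giving a map $\epsilon\colon\X\to\{\pm1\}$ is the same as giving a family $(\epsilon_x)_{x\in Q_0}$ with $\epsilon_x\colon\X_x\to\{\pm1\}$; and because the condition defining a sign function only relates pairs $(x,s_1),(x,s_2)$ with the same first coordinate, $\epsilon$ is a sign function precisely when each $\epsilon_x$ is non-constant in the case $\card{\X_x}=2$ (there is no condition when $\card{\X_x}=1$, and by Step 1 these are the only cases). In either case the number of valid $\epsilon_x$ is $2$: a free choice of sign when $\card{\X_x}=1$, and a choice of the sign of one of the two elements of $\X_x$ (the other being then forced to the opposite value) when $\card{\X_x}=2$. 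Therefore the number of sign functions is $\prod_{x\in Q_0}2=2^{\card{Q_0}}$.

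The hypothesis that $Q$ has no isolated vertices is used exactly to guarantee $\card{\X_x}\ge 1$, hence a local count of $2$, at every vertex: an isolated vertex would have $\X_x=\emptyset$ and contribute a factor $1$ instead. The only substantive point is Step 1, the inequality $\card{\X_x}\le 2$, where axioms (ii) and (iii) are genuinely needed; the configuration requiring care is that of two arrows entering and two leaving $x$, in which (ii) and (iii) together force the incoming colours to coincide with the outgoing ones.
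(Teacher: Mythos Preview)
Your proof is correct and close in spirit to the paper's, but organised differently. The paper fixes, for each vertex $x$, a colour $s_x$ with $(x,s_x)\in\X$ and writes down an explicit bijection $\mathcal{E}\to\{\pm1\}^{Q_0}$ by $\epsilon\mapsto(\epsilon(x,s_x))_{x\in Q_0}$, with inverse extending a choice of signs by forcing the opposite sign on any other colour at $x$; the no-isolated-vertices hypothesis is used to guarantee each $s_x$ exists. You instead decompose $\X$ vertex by vertex and count the local possibilities directly. The substantive addition in your argument is Step~1: you actually derive $\card{\X_x}\le 2$ from the gentle axioms (ii) and (iii), whereas the paper's bijection tacitly relies on this bound (its inverse map only yields a sign function when there are at most two colours at each vertex) without spelling it out. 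So your route is slightly more self-contained, while the paper's gives an explicit parameterisation of the set of sign functions.
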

\begin{proof}
Let $\mathcal{E}$ be the set of all sign functions on $(Q, c)$.  We will define a bijection between this space and $\{\pm 1\}^{Q_0}$.  Namely, for each $x\in Q_0$, select a color $s_x \in \mathcal{C}$ such that $(x, s_x)\in \X$.  If $\epsilon$ is a sign function, denote by $\underline{\epsilon} \in \{\pm 1 \}^{Q_0}$ the vector with $\underline{\epsilon}_x = \epsilon(x,s_x)$.  For $\underline{\epsilon}\in \{\pm 1\}^{Q_0}$, let $\epsilon:\X\rightarrow \{\pm 1\}$ be the extension of the map $\underline{\epsilon}$ by 
\[
\epsilon(x,s)=\begin{cases} \epsilon(x, s_x) & \textrm{ if } s=s_x\\
					-\epsilon(x,s_x) & \textrm{ otherwise} \end{cases}
					\]
These maps are mutual inverses, so indeed $\lvert \mathcal{E} \rvert = \lvert \{\pm 1\}^{Q_0}\rvert =2^{\lvert Q_0 \rvert}$.  
\end{proof}

\begin{definition}\label{def:UDGraph}
Fix a quiver $Q$ with coloring $c$, a dimension vector $\beta$, and a maximal rank map $r$.  For any sign function $\epsilon$ on $(Q, c)$, denote by $\Gamma_{Q,c}(\beta, r, \epsilon)$ the graph with vertices $\{v_i^x\mid x\in Q_0\quad i=1,\dotsc, \beta_x\}$ and edges as follows (see figure \ref{fig:graph} for a visual depiction): for each arrow $a\in Q_1$ and each $i=1,\dotsc, r(a)$
\begin{itemize}
\item[a.] $\xymatrix{ v_i^{ta} \ar@{-}[r] & v_i^{ha}}$ if $\epsilon(ta, c(a)) = 1$, $\epsilon(ha,c(a))=-1$,
\item[b.] $\xymatrix{v_i^{ta} \ar@{-}[r] & v_{\beta_{ha}-i+1}^{ha}}$ if $\epsilon(ta,c(a))=\epsilon(ha,c(a))=1$,
\item[c.] $\xymatrix{v_{\beta_{ta}-i+1}^{ta} \ar@{-}[r] & v_i^{ha}}$ if $\epsilon(ta,c(a))=\epsilon(ha,c(a))=-1$
\item[d.] $\xymatrix{v_{\beta_{ta}-i+1}^{ta}\ar@{-}[r] & v_{\beta_{ha}-i+1}^{ha}}$ if $\epsilon(ta,c(a))=-1$, $\epsilon(ha,c(a))=1$.
\end{itemize}
We will call the graph $\Gamma_{Q,c}(\beta, r, \epsilon)$ an \emph{up-and-down graph}.
\end{definition}
Such a graph comes equipped with a map $w: \operatorname{Edges}(\Gamma_{Q, c}(\beta, r, \epsilon)) \rightarrow Q_1$ where $w(e)=a$ if $e$ is an edge arising from the arrow $a$.  The vertices $v_i^x$ will be referred to as the vertices \emph{concentrated at level $x$}.  Figure \ref{fig:graph} depicts the various edge configurations in $\Gamma_{Q, c}(\beta, r, \epsilon)$ for different choices of $\epsilon$ at the tail and head of an arrow.  

\begin{figure}[h!]
\begin{center}
\includegraphics[width=5in,height=4in]{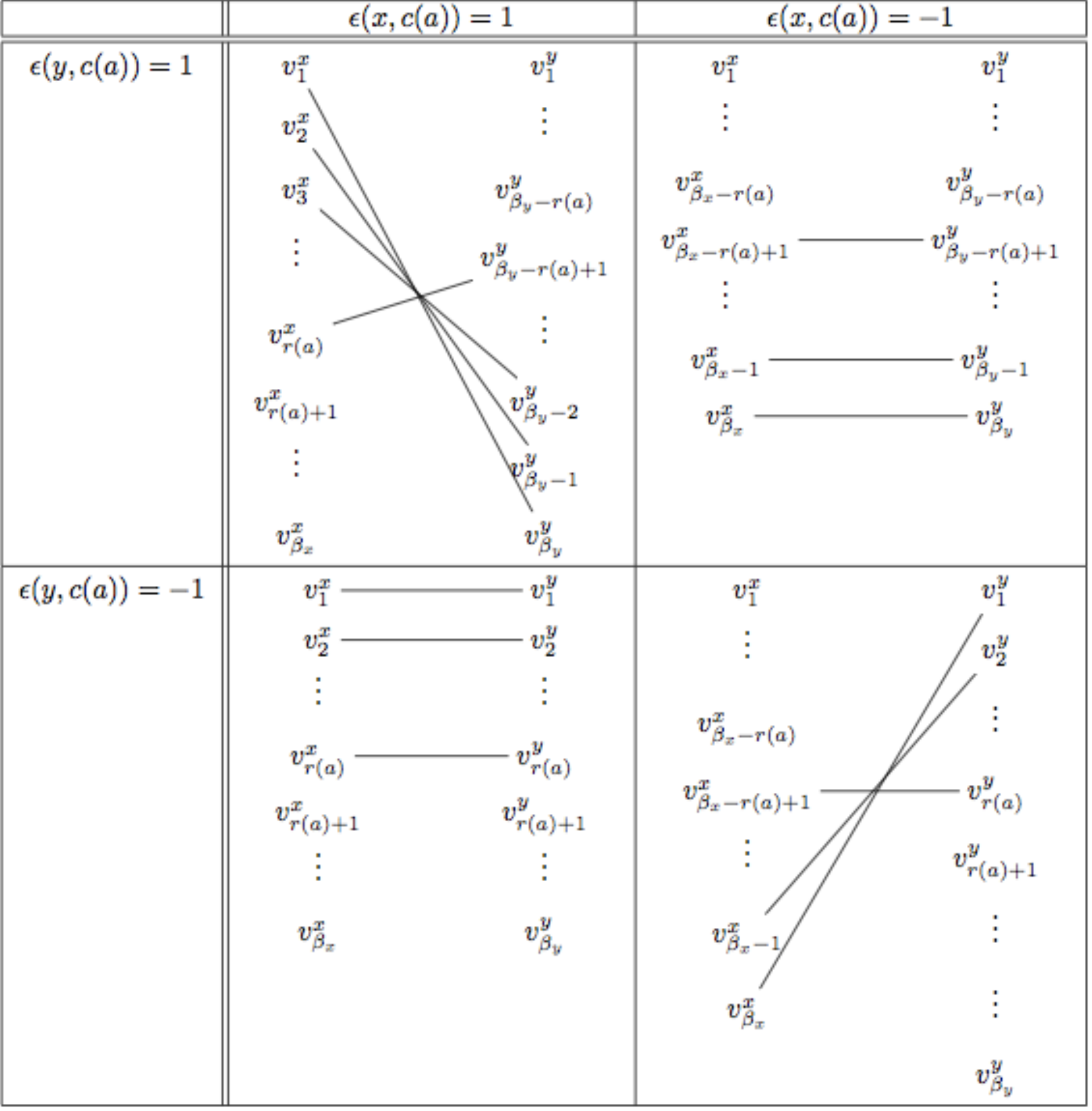}
\end{center}
\caption{A local picture of edges in $\Gamma_{Q, c}(\beta, r, \epsilon)$ with $a\in Q_1$, $x=ta$, $y=ha$, and $s=c(a)$, and varying choices of $\epsilon$.}
\label{fig:graph}
\end{figure}

\begin{proposition}\label{note:UDGraph} Let $\Gamma_{Q, c}(\beta, r, \epsilon)$ be an up-and-down graph.  Then
\begin{itemize}
\item[a.] If a vertex is contained in two edges $e, e'$, then $c(w(e))\neq c(w(e'))$;
\item[b.] Each vertex in $\Gamma_{Q, c}(\beta, r, \epsilon)$ is contained in at most two edges (therefore $\Gamma$ consists of string and band components).
\item[c.] A connected component of an up-and-down graph is again an up-and-down graph.
\end{itemize}
\end{proposition}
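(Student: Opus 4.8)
The plan is to prove parts (a) and (b) together by a local analysis at a single vertex of the graph, and then to deduce (c) by an explicit (small) realization.

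\emph{Parts (a) and (b).} Fix a vertex $v=v_i^x$ of $\Gamma=\Gamma_{Q,c}(\beta,r,\epsilon)$. Every edge at $v$ comes, via the weight map $w$, from an arrow $a$ with $ta=x$ or $ha=x$. First I would observe that a fixed such $a$ contributes at most one edge at $v$: inspecting the four cases of Definition~\ref{def:UDGraph}, the $r(a)$ edges coming from $a$ meet level $x$ in the distinct vertices $v_1^x,\dots,v_{r(a)}^x$, or in $v_{\beta_x}^x,\dots,v_{\beta_x-r(a)+1}^x$, according as $\epsilon(x,c(a))=1$ or $-1$. Next I would prove the sharper claim that \emph{for each color $s$, at most one edge at $v$ has weight of color $s$}. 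Since $c^{-1}(s)$ is a directed path, at most one arrow of color $s$ has head $x$ and at most one has tail $x$; call these $b^-$ and $b^+$ when they exist. If only one exists the previous observation suffices. If both exist, then $b^+b^-$ is a monochromatic path of length two (note $hb^-=x=tb^+$), hence lies in $I_c$, so the defining inequality for rank maps gives $r(b^+)+r(b^-)\le\beta_x$. Reading off the cases again, of the two arrows $b^+,b^-$ one contributes to level $x$ the initial segment $v_1^x,\dots$ and the other the final segment $\dots,v_{\beta_x}^x$ (which is which depends on the sign of $\epsilon(x,s)$), of lengths $r(b^+)$ and $r(b^-)$; the inequality makes these two segments disjoint, so $v$ lies on at most one edge of color $s$. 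Finally, an up-and-down graph carries a sign function, so $x$ is incident to at most two colors; hence $v$ lies on at most two edges, which is (b), and if on exactly two then they carry distinct colors, which is (a). As every vertex of $\Gamma$ now has degree at most two, $\Gamma$ is a disjoint union of paths and cycles, i.e.\ of strings and bands.

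\emph{Part (c).} Let $\Gamma'$ be a connected component. By (a) and (b) it is a path or a cycle whose successive edges carry distinct colors (or a single vertex, if it has no edge, realized by the one-vertex quiver). Enumerate the vertices of $\Gamma'$ as $0,1,2,\dots$ along the path or cycle, with $e_k$ the edge joining $k-1$ and $k$. Take $Q'$ to have vertex set $\{0,1,\dots\}$ (cyclically indexed in the band case) and one arrow $a_k$ joining $k-1$ and $k$ for each $e_k$, oriented arbitrarily in the string case and, in the band case, so that $Q'$ is not a consistently oriented cycle; let $c'$ assign a distinct color to each arrow. Then $I_{c'}=0$, the algebra $\C Q'/I_{c'}=\C Q'$ is finite dimensional (since $Q'$, being a path or a non-cyclically oriented cycle, has no arbitrarily long directed path), and $(Q',c')$ is a gentle string algebra (each vertex has degree at most two, and the remaining conditions are immediate as $I_{c'}=0$). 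Now set $\beta'=(1,1,\dots)$ and $r'=(1,1,\dots)$, which is a maximal rank map since no monochromatic length-two path constrains it, and fix any sign function $\epsilon'$. Because every $\beta'_y=1$, whichever of cases a--d of Definition~\ref{def:UDGraph} applies, the arrow $a_k$ (with $i=1$) contributes exactly the edge between $v_1^{ta_k}$ and $v_1^{ha_k}$, i.e.\ $e_k$; hence $\Gamma_{Q',c'}(\beta',r',\epsilon')\cong\Gamma'$.

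\emph{Main obstacle.} The one genuinely delicate step is the color-by-color count in (a)--(b): one must keep track of which of the configurations a--d of Definition~\ref{def:UDGraph} occurs at the head and at the tail of a monochromatic pair $b^+,b^-$ in order to see that the two families of level-$x$ endpoints are complementary segments, whose disjointness is exactly what the rank-map inequality $r(b^+)+r(b^-)\le\beta_x$ provides. A secondary point, in (c), is that in the band case the arrows of $Q'$ must be oriented so that the resulting algebra stays finite dimensional, as the path algebra of an oriented cycle is not a gentle string algebra.
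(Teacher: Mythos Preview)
Your treatment of (a) and (b) is essentially the paper's: the paper also shows that two edges of the same colour at $v_i^x$ would, via the cases of Definition~\ref{def:UDGraph}, force both $i\le r(a_2)$ and $i\ge\beta_x-r(a_1)+1$, contradicting the rank-map inequality $r(a_1)+r(a_2)\le\beta_x$, and then deduces (b) from the two-colour bound at each vertex coming from the gentle condition.

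For (c) your route genuinely diverges from the paper's. The paper keeps $(Q,c,\epsilon)$ fixed: from the component $\gamma$ it reads off $\beta'_x=\#\{\text{vertices of }\gamma\text{ at level }x\}$ and $r'(a)=\#\{\text{edges of }\gamma\text{ labelled }a\}$, and identifies $\gamma$ with $\Gamma_{Q,c}(\beta',r',\epsilon)$ by sending $w_i^x$ to the $i$-th vertex of $\gamma$ at level $x$ --- explicitly noting that this is ``not simply isomorphism of graphs, but one that preserves the labeling of edges and levels of vertices.'' You instead realise $\gamma$ only as an abstract graph, over a brand-new path- or cycle-shaped $(Q',c')$ with $\beta'\equiv 1$ and $r'\equiv 1$. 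That suffices for the statement read literally, but it discards exactly the structure the paper relies on later: Corollary~\ref{cor:bandcomponents} restricts $\beta$ and $r$ to each component and works inside $\Rep_{Q,c}(\beta|_i,r|_i)$ for the \emph{original} algebra, which requires the component to be an up-and-down graph for the same $(Q,c,\epsilon)$, not for an auxiliary one. So the paper's argument buys compatibility with the ambient representation variety, whereas yours buys only the graph shape --- and your detour through orienting $Q'$ so that $\C Q'$ stays finite-dimensional is a complication the paper's argument never has to face.
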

\begin{proof}
For part (a), suppose that $v_i^x$ is a vertex in $\Gamma$ incident to two edges $e$ and $e'$ where $c(w(e))=c(w(e'))=s$.  It is clear from the definition of the edges that $w(e)\neq w(e')$.  Since there is at most one outgoing and at most one incoming arrow of color $s$ relative to $x$, it can be assumed that $w(e)=a_1$ and $w(e')=a_2$ where $h(a_1)=t(a_2)=x$ and $c(a_i)=s$.  Suppose that $\epsilon(x,s)=1$ (the other case is identical).  Then by definition \ref{def:UDGraph}, $i\leq r(a_2)$, and $i\geq \beta_x-r(a_1)+1$.  But $r$ is a rank map, so $\beta_x\geq r(a_1)+r(a_2)$.  Therefore, $i\geq r(a_2)+1$ and $i\leq r(a_2)$, a contradiction.  
For part (b), if a vertex $v_i^x$ in $\Gamma$ is contained in three edges, then by part (a) the arrows corresponding to the edges are of three different colors, and all incident to $x$, which is false by assumption that $\C Q/I_c$ is a gentle string algebra.
Finally, suppose that $\gamma$ is a connected component of $\Gamma_{Q, c}(\beta, r,\epsilon)$.  Let us suppose that $\gamma$ has $\beta_x'$ vertices at level $x$ for each $x\in Q_0$, and has $r'(a)$ edges labeled $a$ for each $a\in Q_1$.  Then $\gamma=\Gamma_{Q, c}(\beta', r', \epsilon)$ (this is not simply isomorphism of graphs, but one that preserves the labeling of edges and levels of vertices).  Let us label the vertices in $\Gamma_{Q, c}(\beta',r',\epsilon)$ by $\{w_i^x\mid x\in Q_0, i=1,\dotsc, \beta_x'\}$.  Let $f:\Gamma_{Q, c}(\beta', r', \epsilon)\rightarrow \Gamma_{Q, c}(\beta, r, \epsilon)$ be the homomorphism of graphs defined as follows: $f:w_i^x \mapsto v_{\gamma_i(x)}^x$ where $\gamma_i(x)$ is the $i$-th vertex in $\gamma$ at level $x$.  It is clear that the image of this map is precisely the graph $\gamma$, and that $f$ gives a bijection between $\Gamma_{Q, c}(\beta',r',\epsilon)$ and $\gamma$.
\end{proof}
\begin{remark} It is worth noting that distinct sign functions give rise to a different numbering on the vertices of the graph $\Gamma$, but do not change the graph structure.  In fact, if $\epsilon$ and $\epsilon'$ differ in only one vertex, $x$ (say), the graphs $\Gamma_{Q, c}(\beta, r, \epsilon)$ and $\Gamma_{Q, c}(\beta, r, \epsilon')$ differ only by applying the permutation $i\mapsto \beta_x-i+1$ to the vertices $\{v_i^x \mid i=1,\dotsc, \beta_x\}$.  We will soon see that the families of modules arising from different choices of $\epsilon$ coincide.
\end{remark}

Here we collect some technical definitions and notations to be used concerning these graphs.  We will extend the terminology of Butler and Ringel (\cite{BR}) slightly.  Let $\Gamma_{Q, c}(\beta, r, \epsilon)$ be an up and down graph.  A vertex $v_{j'}^x$ is said to be {\bf above} (resp. {\bf below}) $v_j^x$ if $j>j'$ (resp. $j<j'$).  We will depict the graphs of $\Gamma_{Q, c}(\beta, r, \epsilon)$ in such a way that above and below are literal.  

A vertex $v_j^x$ in $\Gamma_{Q, c}(\beta, r, \epsilon)$ will be referred to as a {\bf source} (resp. {\bf target}) if $t(w(e))=x$ (resp. $h(w(e))=x$) for every edge $e$ containing it.  A {\bf 2-source} (resp. {\bf 2-target}) is a source (resp. target) incident to exactly two edges.  We will denote the sets of such vertices by $S(\Gamma)$, $T(\Gamma)$, $S^2(\Gamma)$, and $T^2(\Gamma)$, respectively.  

To a path $p=v_{i_n}^{x_n} e_n \dotsc v_{i_1}^{x_1} e_1 v_{i_0}^{x_0}$ on $\Gamma_{Q, c}(\beta, r, \epsilon)$, we will associate a sequence $A(p)$ of elements in the set alphabet $Q_1 \cup Q_1^{-1}$ (that is the formal alphabet with characters consisting of the arrows and their inverses), with $$A(p)_i= \begin{cases} w(e_i)  & \textrm{ if } t(w(e_i)) = x_{i-1} \\ w(e_i)^{-1} & \textrm{ if } t(w(e_i))=x_i \end{cases}.$$  Such a path $p$ will be called {\bf direct} (resp. {\bf inverse}) if $A(p)$ is a sequence of elements in $Q_1$ (resp. $Q_1^{-1}$).

Finally, a path $p$ will be called {\bf left positive} (resp. {\bf left negative} if $A(p)_n \in Q_1$ and $\epsilon(x_n, c(e_n))=1$ (resp. $-1$).  Analogously the path is called {\bf right positive} (resp. {\bf right negative}) if $A(p)_1 \in Q_1$ and $\epsilon(x_0, c(e_0))=1$ (resp. $-1$).

\begin{example}\label{example1}
Consider the quiver below with coloring indicated by type of arrow:
\begin{align*}
\xymatrix@C=100pt@R=6ex{
1 \ar@[|(4)][r]^{r_1} \ar@[|(4)]@{..>}[dr]_<<<<<{g_1} & 2 \ar@[|(4)][r]^{r_2} \ar@[|(4)]@{-->}_<<<<<<{p_2}[dr] & 3 \\
4 \ar@[|(4)]@{~>}[r]_{b_1} \ar@[|(4)]@{-->}[ur]^<<<<<{p_1} & 5 \ar@[|(4)]@{..>}[ur]^<<<<<<{g_2} \ar@[|(4)]@{~>}[r]_{b_2} & 6 }
\end{align*}
Let us say that the color of the arrow $a_i$ is $a$ in the above picture.  Let $\beta,r$ be the pair depicted in the following:
\begin{align*}
\xymatrix@C=100pt@R=2ex{
*+[F]{3} \ar@[|(4)][r]|*+[o][F]{3} \ar@[|(4)][ddddr]|<<<<<<<<<<<<*+[o][F]{2} & *+[F]{4} \ar@[|(4)][r]|*+[o][F]{1} \ar@[|(4)][ddddr]|<<<<<<<<<<*+[o][F]{2}& *+[F]{1} \\
&&\\
&&\\
&&\\
*+[F]{2} \ar@[|(4)][r]|*+[o][F]{2} \ar@[|(4)][uuuur]|<<<<<<<<<<<<*+[o][F]{2} & *+[F]{3} \ar@[|(4)][uuuur]|<<<<<<<<*+[o][F]{1} \ar@[|(4)][r]|*+[o][F]{1} & *+[F]{2} }
\end{align*}
and $\epsilon^{-1}(1)=\{(1,g), (2,p), (3,g), (4, b), (5,b), (6,p)\}$ (so $\epsilon^{-1}(-1)$ is the complement in $\X$).  Then $\Gamma_{Q, c}(\beta, r,\epsilon)$ takes the following form:

\xymatrix@R=5pt@C=90pt{
v_1^{(1)} \ar@[|(4)]@{-}[ddrr]^>>>>>>>>>>>{r_1} \ar@[|(4)]@{..}[dddddrr]^<<<<<<<<<<<<<<<<<<<<<<<<<<{g_1}&& v_1^{(2)} \ar@[|(4)]@{--}[ddddddrr]^<<<<<<<<<{p_2} && v_1^{(3)} \\
v_2^{(1)} \ar@[|(4)]@{-}[rr]^>>>>>>>>>{r_1} \ar@[|(4)]@{..}[dddddrr]^<<<<<<<<<<<<<<<<<<<<<{g_1} && v_2^{(2)} \ar@[|(4)]@{--}[ddddrr]^<<<<<<<<<{p_2} && \\
v_3^{(1)} \ar@[|(4)]@{-}[uurr]^>>>>>>>>>>>{r_1} && v_3^{(2)}  && \\
		&&		v_4^{(2)} \ar@[|(4)]@{-}[uuurr]^>>>>>>>>>>>>>>>>{r_2}  && \\
&&&&\\
v_1^{(4)}\ar@[|(4)]@{~}[ddrr]_>>>>>>>>>>>>>>>>>>{b_1} \ar@[|(4)]@{--}[uuurr]^<<<<<<<<<<<<<<<<<{p_1} && v_1^{(5)} \ar@[|(4)]@{~}[rr]^{b_2} && v_1^{(6)} \\
v_2^{(4)} \ar@[|(4)]@{--}[uuurr]^<<<<<<<<<<<<<<<<<<<{p_1} \ar@[|(4)]@{~}[rr]_>>>>>>>>>>>>>>{b_1} && v_2^{(5)}  && v_2^{(6)}\\
		&&		v_3^{(5)} \ar@{..}@[|(4)][uuuuuuurr]_<<<<<<<<<<<<<<<<<<<{g_2}		&& }
		
\begin{itemize}
\item[i.] The vertices $v_1^{(1)}, v_2^{(1)}, v_3^{(1)},v_1^{(4)}, v_2^{(4)}$ are sources, and $v_3^{(2)}, v_2^{(5)}, v_1^{(3)}, v_1^{(6)}, v_2^{(6)}$ are targets.  
\item[ii.] The path $v_2^{(6)}e_2v_1^{(2)}e_1v_3^{(1)}$ with $w(e_1)=r_1$ and $w(e_2)=p_2$ is a direct path that is left positive (since $\epsilon(6,p_2)=1$), and right negative; while $v_1^{(4)}e_2v_3^{(2)}e_1 v_1^{(1)}$ with $w(e_1)=p_1$, $w(e_2)=r_1$ is not a direct path.
\end{itemize}
\end{example}

\subsection{Some Combinatorics for Up-and-Down Graphs}

The proof of the main theorem requires an explicit description of the projective resolution of the modules arising from up-and-down graphs.  In this section, we collect some technical lemmas concerning the structure of the graphs $\Gamma_{Q, c}(\beta, r, \epsilon)$ to be used in describing the projective resolution.  
\begin{lemma}\label{lem:rightleftconnection} Let $v_j^x$ be a vertex in $\Gamma_{Q, c}(\beta, r, \epsilon)$, and suppose that $$p=v_j^x e_l v_{i_{l-1}}^{x_{l-1}} \dotsc v_{i_1}^{x_1} e_1 v_i^y$$ is a left direct path ending in $v_j^x$.
\begin{itemize}
\item[A.] If $p$ is left negative direct, and $v_{j'}^x$ is above $v_j^x$, then there is a left negative direct path $$p'=v_{j'}^x e_{l-1}' v_{i_{l-1}'}^{x_{l-1}} \dotsc v_{i_1'}^{x_1} e_1' v_{i'}^y$$ with $A(p')=A(p)$.  Furthermore,
\begin{itemize}
\item[A1.] $v_{i'}^y$ is above $v_i^y$ if and only if $\epsilon(y,c(w(e_1')))=1$;
\item[A2.] $v_{i'}^y$ is below $v_i^y$ if and only if $\epsilon(y, c(w(e_1')))=-1$.
\end{itemize}
\item[B.] If $p$ is left positive direct, and $v_{j'}^x$ is below $v_j^x$, then there is a left positive direct path $$p'=p'=v_{j'}^x e_{l-1}' v_{i_{l-1}'}^{x_{l-1}} \dotsc v_{i_1'}^{x_1} e_1' v_{i'}^y$$ with $A(p')=A(p)$.  Furthermore, 
\begin{itemize}
\item[B1.] $v_{i'}^y$ is below $v_j^x$ if and only if $\epsilon(y,c(w(e_1')))=-1$;
\item[B2.] $v_{i'}^y$ is above $v_j^x$ if and only if $\epsilon(y,c(w(e_1')))=1$.
\end{itemize}
\end{itemize}
\end{lemma}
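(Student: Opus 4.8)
The plan is to prove both parts by induction on the length $l$ of the path, tracking at each step how the level index transforms as we traverse an edge, and crucially how the ``above/below'' relation at one endpoint of an edge is related to the ``above/below'' relation at the other endpoint. The base of the induction ($l=0$, the empty path, where $v^x_j = v^y_i$) is vacuous or immediate. For the inductive step, I would peel off the \emph{last} edge $e_l$ of the path (the one incident to $v^x_j$), apply the inductive hypothesis to the shorter path $v^{x_{l-1}}_{i_{l-1}} e_{l-1}\cdots e_1 v^y_i$, and then push the ``above/below at $x_{l-1}$'' conclusion back across $e_l$ to obtain the ``above/below at $x$'' hypothesis we were handed. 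Since $A(p')=A(p)$ is required, the new path $p'$ uses the \emph{same underlying arrows}, just with shifted level indices, so the only content is bookkeeping the indices.

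The engine of the argument is a single observation about one edge. Fix an arrow $a$ with $x = ta$, $y = ha$, $s = c(a)$, and consider the four edge types in Definition \ref{def:UDGraph}. Because $p$ is \emph{direct}, every edge is traversed from tail-vertex to head-vertex, so each edge $e_k$ with $w(e_k)=a_k$ connects a level-$t(a_k)$ vertex to a level-$h(a_k)$ vertex via one of the four formulas $i \leftrightarrow i$, $i \leftrightarrow \beta_{h}-i+1$, $\beta_t - i +1 \leftrightarrow i$, $\beta_t - i +1 \leftrightarrow \beta_h - i + 1$, the case being selected by the pair $(\epsilon(t(a_k),c(a_k)), \epsilon(h(a_k),c(a_k)))$. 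The point is: the map $i\mapsto i$ is order-preserving while $i\mapsto \beta-i+1$ is order-reversing, so whether the correspondence across an edge \emph{preserves} or \emph{reverses} the above/below order depends only on whether $\epsilon$ at the two endpoints agree or disagree — more precisely, on the signs $\epsilon$ at the tail side and at the head side separately. Tracing which of the two index-slots ($i$ vs.\ $\beta - i + 1$) the vertex $v^x_j$ occupies is governed by ``left positive'' vs.\ ``left negative'' (that is, by $\epsilon(x,c(w(e_l)))$), which is exactly the hypothesis that distinguishes case A from case B. So in case A (left negative, $\epsilon(x,s_l)=-1$): $v^x_j$ sits in the ``$\beta_x - j + 1$'' slot, so moving $v^x_j$ \emph{up} (to $v^x_{j'}$, $j'>j$) moves that slot \emph{down}, and one checks this still lands inside the edge (using that $r$ is a rank map, exactly as in the proof of Proposition \ref{note:UDGraph}(a)), producing a genuine vertex $v^{x_{l-1}}_{i'_{l-1}}$, and the induction continues. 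In case B the roles of up/down swap because $\epsilon(x,s_l)=+1$.

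The conclusions A1/A2 and B1/B2 come out of the \emph{first} edge $e_1$ (the one incident to $v^y_i$) by the same edge-analysis: once we know whether the vertex at level $x_1$ moved up or down, crossing $e_1$ into level $y$ either preserves or reverses that, and the preserve/reverse is dictated precisely by $\epsilon(y, c(w(e_1')))$ being $+1$ or $-1$ — the order-preserving slot-correspondences are exactly the ones with the relevant $\epsilon(y,\cdot)=+1$, as read off from the four cases of Definition \ref{def:UDGraph}. I would set things up so that the inductive statement carried along is slightly stronger than A/B as written: it should record, for \emph{every} intermediate vertex $v^{x_k}_{i'_k}$, whether it lies above or below $v^{x_k}_{i_k}$, together with the sign $\epsilon(x_k, c(w(e_k')))$ controlling the transition; then A1/A2/B1/B2 are just the instance $k=0$ of this bookkeeping, and the ``Furthermore'' clauses require no extra work.

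The main obstacle is purely notational: keeping straight, across an edge, the four-way case split of Definition \ref{def:UDGraph} \emph{simultaneously} with which slot ($i$ or $\beta-i+1$) each endpoint occupies, and making sure the shifted indices $i'_k$ stay in the legal range $1,\dots, r(w(e_k))$ so that $p'$ is actually a path in $\Gamma_{Q,c}(\beta,r,\epsilon)$ and not a reference to a nonexistent edge. This range check is where the rank-map inequality $r(a_2)+r(a_1)\le \beta_{ha_1}$ re-enters; it is the same mechanism that made Proposition \ref{note:UDGraph}(a) work, and no genuinely new idea is needed beyond organizing the casework cleanly (ideally via the local pictures in Figure \ref{fig:graph}). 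Everything else is a routine but lengthy verification, best presented by reducing to the one-edge lemma and then invoking it $l$ times.
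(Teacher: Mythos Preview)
Your approach---induction on the path length with the one-edge analysis of Definition~\ref{def:UDGraph} as the engine, proving A and B simultaneously so the inductive hypothesis applies regardless of the sign at $x_{l-1}$---is exactly the paper's. One slip to fix when you write it out: in the paper's convention $v^x_{j'}$ is \emph{above} $v^x_j$ when $j'<j$ (smaller indices are drawn higher), so your parenthetical ``$j'>j$'' is reversed; once corrected, left negative at $x=h(a)$ gives $j\le r(a)$ directly from cases (a),(c) of Definition~\ref{def:UDGraph}, and $j'<j$ immediately yields the required edge at $v^x_{j'}$---the rank-map inequality is not actually needed for this range check.
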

\begin{proof}
We will prove this lemma by induction on the length of $p$.  Suppose that $p=v_j^x e_1 v_i^y$ with $A(p) =a$.  If $p$ is left negative direct, then $\epsilon(x,c(a))=-1$.  By definition of the graph $\Gamma$, then, $j\leq r(a)$.  But $v_{j'}^x$ is above $v_j^x$ if and only if $j'<j$.  By definition \ref{def:UDGraph} (a), (c), there is an edge $e_1'$ terminating at $v_{j'}^x$ labeled $a$, so $p'=v_{j'}^x e_1' v_{i'}^y$.  If $\epsilon(y,c(a))=1$, $i=j$ and $i'=j'$, so indeed $i'<i$, implying that $v_{i'}^y$ is above $v_i^y$.  On the other hand, if $\epsilon(x,c(a))=-1$, then $i=\beta_x-j+1$ and $i'=\beta_x-j'+1$, so $i'>i$, and $v_{i'}^y$ is below $v_i^y$.  The other direction is also clear for [A1] and [A2]. 

Now suppose that $p$ is left positive direct of length one, i.e., $\epsilon(x,c(a))=1$.  Write $p=v_i^y e_1 v_{j}^x$.  By definition of $\Gamma$, then, $j\geq\beta_x-r(a)+1$.  Suppose that $j=\beta_x-\hat{j}+1$, and $j'=\beta_x-\hat{j'}+1$.  Since $v_{j'}^x$ is below $v_j^x$, we have that $j'=\beta_x-\hat{j'}+1>\beta_x-\hat{j}+1 = j$, so $\hat{j'}<\hat{j}$.  Indeed, $j'>\beta_x-r(a)+1$, so by definition \ref{def:UDGraph} (b) or (d), there is an edge $e_1'$ labeled $a$ terminating at $v_{j'}^x$.  $\epsilon(y,c(a))=-1$ if and only if $i=\beta_y-\hat{j}+1$ and $i'=\beta_y-\hat{j'}+1$, i.e., $i'>i$, so $v_{i'}^y$ is below $v_i^y$.  $\epsilon(y,c(a))=1$ if and only if $i=\hat{j}$ and $i'=\hat{j'}$, i.e., $i'<i$, so $v_{i'}^y$ is above $v_i^y$.  

Now assume that [A] and [B] are true for all paths of length at most $l-1$.  Suppose that 
$$p=v_j^x e_l v_{j_{l-1}}^{y_{l-1}} e_{l-1} v_{j_{l-2}}^{y_{l-2}} e_{l-2} \dotsc e_1 v_i^y$$ 
is left negative direct, and $v_{j'}^x$ is above $v_j^x$.  By the first step, there is a path $v_{j'}^x e_l' v_{j_{l-1}'}^{y_{l-1}}$ with $w(e_l)=w(e_l')$.  
\begin{itemize}
\item[Case 1:] $\epsilon(y_{l-1}, c(w(e_l')))=1$ if and only if $v_{j_{l-1}'}^{y_{l-1}}$ is above $v_{j_{l-1}}^{y_{l-1}}$.  But by proposition \ref{note:UDGraph}, $\epsilon(y_{l-1}, c(w(e_l)))=-\epsilon(y_{l-2}, c(w(e_{l-1})))$, so $$\tilde{p}= v_{j_{l-1}}^{y_{l-1}} e_{l-1} \dotsc e_1 v_i^y$$ is left negative direct.  So by the inductive hypothesis, since $\tilde{p}$ is of length $l-1$, we have a path $$\tilde{p'} = v_{j_{l-1}'}^{y_{l-1}} e_{l-1}' \dotsc e_1' v_{i'}^y$$ with $A(\tilde{p})=A(\tilde{p'})$.  Taking $p'=e_l' \tilde{p'}$, we have a left negative direct path $p'$ terminating in $v_{j'}^x$.  Again, by the inductive step, $v_{i'}^y$ is above (resp. below) $v_i^y$ if and only if $\epsilon(y, c(w(e_1)))=1$ (resp. $-1$).
\item[Case 2:] $\epsilon(y_{l-1}, c(w(e_l')))=-1$ if and only if $v_{j_{l-1}'}^{y_{l-1}}$ is below $v_{j_{l-1}}^{y_{l-1}}$.  By proposition \ref{note:UDGraph}, $\epsilon(y_{l-1}, c(w(e_l)))=-\epsilon(y_{l-2}, c(w(e_{l-1})))$, so $$\tilde{p} = v_{j_{l-1}}^{y_{l-1}} e_{l-1} \dotsc e_l v_i^y$$ is left positive direct.  By the inductive hypothesis, there is a path $$\tilde{p'} = v_{j_{l-1}'}^{y_{l-1}} e_{l-1}' \dotsc e_1' v_{i'}^y$$ with $A(\tilde{p})=A(\tilde{p'})$.  Taking $p'=v_{j'}^x e_l' \tilde{p'}$, we have a left negative direct path $p'$ terminating in $v_{j'}^x$.  By the hypothesis, $v_{i'}^y$ is above (resp. below) $v_i^y$ if and only if $\epsilon(y, c(w(e_1)))=1$ (resp. $-1$).  
\end{itemize}
The same argument hold if $p$ is left positive direct, interchanging the terms `above' and `below'.   
\end{proof}

Here we collect some properties that determine what types of extremal vertices occur in which levels.
\begin{lemma}\label{note:corners} Let $\Gamma_{Q, c}(\beta, r, \epsilon)$ be an up-and-down graph, and let $a_1, a_2, b_1, b_2\in Q_1$ be colored arrows as indicated in the figure:
\begin{align*}
\xymatrix@R=1pt{
\ar[dr]^{a_1} &  & \\
&y \ar[ur]^{a_2}\ar@{..>}[dr]_{b_2}\\
\ar@{..>}[ur]_{b_1} & & }
\end{align*}
\begin{itemize}
\item[i.] If $v_j^y$ is a 2-source (resp. 2-target), then $r(a_1)+r(b_1)>\beta_y$ (resp. $r(a_2)+r(b_2)>\beta_y$);
\item[ii.] Let $m_1=\max\{r(a_1), r(b_2)\}$ and $m_2=\max\{r(b_1), r(a_2)\}$.  Then if $v_j^y$ is an isolated vertex, $m_1+m_2<\beta_y$ (in particular, there are neither 2-sources not 2-targets vertices at level $y$);
\item[iii.] If $v_i^y$ is a 1-target contained in an edge labeled by $a_1$ (resp. $b_2$), then $r(b_1)+r(b_2)<\beta_x$ (resp. $r(a_1)+r(a_2)<\beta_x$).  
\end{itemize}
\end{lemma}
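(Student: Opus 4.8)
The plan is to argue from the definitions of rank map and of the up-and-down graph, distinguishing cases according to the sign function at the vertex $y$. The key point is that the four incident arrows at $y$ come in two color classes, one (of color $s$, say, containing $a_1,a_2$ if $c(a_1)=c(a_2)$) and one (of color $t$, containing $b_1,b_2$), and the sign function assigns $\epsilon(y,s)=-\epsilon(y,t)$; I will fix notation so that $\epsilon(y,s)=1$ and $\epsilon(y,t)=-1$ (the other case is symmetric, swapping ``above'' with ``below''). Under this convention, by Definition \ref{def:UDGraph}, the edges labeled $a_1$ (incoming of color $s$) occupy the vertices $v_j^y$ with $j \geq \beta_y - r(a_1)+1$, i.e.\ the top $r(a_1)$ vertices; the edges labeled $a_2$ (outgoing of color $s$) occupy the bottom $r(a_2)$ vertices $v_j^y$, $j\le r(a_2)$; the edges labeled $b_1$ (incoming of color $t$) occupy the bottom $r(b_1)$ vertices; and the edges labeled $b_2$ (outgoing of color $t$) occupy the top $r(b_2)$ vertices. (Here I use that a vertex at level $y$ touches at most one edge of each color, by Proposition \ref{note:UDGraph}(a).)

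For part (i): if $v_j^y$ is a 2-source it lies on an edge labeled $a_1$ and an edge labeled $b_1$ — these are the only two arrows with head $y$ — so with the above convention $j \geq \beta_y - r(a_1) + 1$ and $j \leq r(b_1)$, forcing $\beta_y - r(a_1)+1 \leq r(b_1)$, i.e.\ $r(a_1)+r(b_1) > \beta_y$; dually for a 2-target one gets $j \leq r(a_2)$ and $j \geq \beta_y - r(b_2)+1$, whence $r(a_2)+r(b_2) > \beta_y$. For part (iii): if $v_i^y$ is a 1-target on an edge labeled $a_1$, then it is \emph{not} on the edge labeled $b_2$ (the other outgoing arrow), nor — being a target — on $a_2$ or $b_1$; here I should be careful that the relevant alternative occupancy is the one that would make $v_i^y$ a source or 2-target, so I will check directly that $v_i^y$ avoiding the edges labeled $b_1$ and $b_2$ across all $i$ in the $a_1$-range forces those two ranges to be disjoint from a common band, giving $r(b_1)+r(b_2) < \beta_y$; symmetrically for $b_2$. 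For part (ii): an isolated vertex $v_j^y$ lies on no edge, so it avoids all four incoming/outgoing ranges; taking $m_1 = \max\{r(a_1),r(b_2)\}$ (the two ``top'' ranges, of sizes $r(a_1)$ and $r(b_2)$, both counted from the top) and $m_2 = \max\{r(b_1),r(a_2)\}$ (the two ``bottom'' ranges), the union of the top block of size $m_1$ and the bottom block of size $m_2$ must miss $v_j^y$, so $m_1 + m_2 < \beta_y$; and since $2$-source/$2$-target vertices require $r(a_1)+r(b_1)>\beta_y$ or $r(a_2)+r(b_2)>\beta_y$ — both ruled out by $m_1+m_2 < \beta_y$ — the parenthetical follows.

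The main obstacle is purely bookkeeping: matching the four cases of Definition \ref{def:UDGraph} (a)--(d) to which end of the level-$y$ block each arrow occupies, and verifying that the two arrows sharing a color really do land on opposite ends (this is exactly the content of the sign condition $\epsilon(y,s) = -\epsilon(y,t)$, combined with the fact that within a single color there is at most one incoming and one outgoing arrow at $y$, by the gentle hypothesis). Once the occupancy picture is set up correctly, each of (i), (ii), (iii) is a one-line inequality comparison, and the symmetric case $\epsilon(y,s)=-1$ is handled by relabeling top/bottom. I would also note that in (iii) the roles of $a_1$ and $b_2$, and of $a_2$ and $b_1$, must be kept straight since they sit at the \emph{same} end of the block but carry \emph{different} colors; conflating them is the one place an error could creep in.
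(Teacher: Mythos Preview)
Your occupancy picture is exactly the right idea and is the same mechanism the paper uses; the paper, however, runs the argument more cheaply. It proves only (iii) (declaring the rest ``similar'') and does \emph{not} fix the sign function at all: since $b_1$ and $b_2$ share a color, Proposition~\ref{note:UDGraph}(a) says no vertex at level $y$ is on both a $b_1$-edge and a $b_2$-edge, so together these edges touch exactly $r(b_1)+r(b_2)\le \beta_y$ distinct vertices. If equality held, every vertex at level $y$ would lie on a $b$-colored edge, in particular $v_i^y$ would; but $v_i^y$ is a $1$-target on an $a_1$-edge only, contradiction. Your more explicit bookkeeping with $\epsilon$ reaches the same inequality, but the paper's version shows the case split on $\epsilon(y,s)$ is unnecessary.

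There is one genuine slip in your treatment of (i). By the paper's definition a vertex $v_j^y$ is a \emph{source} when $t(w(e))=y$ for every incident edge $e$, i.e.\ all its edges are on \emph{outgoing} arrows. So a $2$-source at level $y$ lies on edges labeled $a_2$ and $b_2$, not $a_1$ and $b_1$ as you wrote; with your own occupancy ranges this yields $j\le r(a_2)$ and $j\ge \beta_y-r(b_2)+1$, hence $r(a_2)+r(b_2)>\beta_y$. Dually a $2$-target lies on the incoming arrows $a_1,b_1$ and gives $r(a_1)+r(b_1)>\beta_y$. (This is the reverse of how the lemma's ``resp.'' is phrased---the statement appears to have source and target swapped---but your argument should in any case be internally consistent with the definition of source/target given just before Example~\ref{example1}.) A similar mix-up creeps into your discussion of (iii), where you say ``nor --- being a target --- on $a_2$ or $b_1$'': being a target rules out $a_2$ and $b_2$, not $b_1$; the reason $v_i^y$ is not on a $b_1$-edge is simply that it is a $1$-target whose unique edge is already labeled $a_1$.
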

\begin{proof}
We prove only (iii), since the others are similar.  Suppose that $v_i^y$ is a 1-target contained in an edge labeled $a_1$.  If $r(b_1)+r(b_2)=\beta_y$, then each vertex at level $y$ would be contained in an edge (either labeled $b_1$ or $b_2$), including $v_i^y$.  But this contradicts the assumption.  
\end{proof}

\begin{lemma}\label{lem:maximalrank}
Suppose that there is a sequence of arrows $a_1, a_2, a_3 \in Q_1$ with $c(a_1)=c(a_2)=c(a_3)$, $h(a_1)=t(a_2)=x_1$, and $h(a_2)=t(a_3)=x_2$.  If $r$ is a maximal rank map, then we have the following:
\begin{itemize}
\item[i.] if $r(a_1)+r(a_2)<\beta_{x_1}$ then $r(a_2)+r(a_3)=\beta_{x_2}$
\item[ii.] if $r(a_2)+r(a_3)<\beta_{x_2}$ then $r(a_1)+r(a_2)=\beta_{x_1}$.
\end{itemize}

\end{lemma}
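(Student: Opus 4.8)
The plan is to prove the contrapositive of each implication by using the maximality of $r$: if neither equality holds, I exhibit a strictly larger rank map, contradicting maximality. Concretely, for part (i), suppose $r(a_1)+r(a_2)<\beta_{x_1}$ and also $r(a_2)+r(a_3)<\beta_{x_2}$. I would define $r'$ to agree with $r$ on all arrows except $a_2$, where I set $r'(a_2)=r(a_2)+1$. The point is that increasing the rank along $a_2$ by one can only violate the rank-map inequality on monochromatic paths of length two that involve $a_2$, and by the gentle string hypothesis (each vertex is head of at most two arrows and tail of at most two, and condition ii on the presentation) the only such paths are $a_2a_1$ (through the vertex $x_1$) and $a_3a_2$ (through the vertex $x_2$). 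The first requires $r'(a_2)+r'(a_1)=r(a_2)+r(a_1)+1\le\beta_{x_1}$, which holds precisely because $r(a_1)+r(a_2)<\beta_{x_1}$; the second requires $r'(a_3)+r'(a_2)=r(a_3)+r(a_2)+1\le\beta_{x_2}$, which holds because $r(a_2)+r(a_3)<\beta_{x_2}$. Hence $r'$ is a rank map with $r'>r$, contradicting maximality, so $r(a_2)+r(a_3)=\beta_{x_2}$. Part (ii) is symmetric: assuming $r(a_2)+r(a_3)<\beta_{x_2}$ and $r(a_1)+r(a_2)<\beta_{x_1}$ leads to the same contradiction, so one of the two must be an equality; stated as in the lemma, $r(a_2)+r(a_3)<\beta_{x_2}$ forces $r(a_1)+r(a_2)=\beta_{x_1}$.

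The key verification step, and the one I would write out with a little care, is the claim that bumping $r(a_2)$ by one affects no rank-map inequalities other than the two listed. A rank-map inequality is indexed by a monochromatic path $b_2b_1$ of length two (same color, $h(b_1)=t(b_2)$), and it mentions $a_2$ only if $b_1=a_2$ or $b_2=a_2$. Since the color class $c^{-1}(c(a_2))$ is a directed path, the arrow immediately following $a_2$ in that path — if it exists — is the unique arrow $b$ with $t(b)=h(a_2)=x_2$ and $c(b)=c(a_2)$, and by the sequence hypothesis that arrow is $a_3$; likewise the unique arrow preceding $a_2$ is $a_1$. So the only inequalities touched are the ones for $a_3a_2$ and $a_2a_1$, exactly as used above. (If either $a_1$ or $a_3$ failed to exist we would have even fewer constraints, but the lemma hypothesis supplies both.)

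I do not anticipate a serious obstacle here; the argument is a one-step local perturbation of the rank map. The only thing to be mildly careful about is making sure the perturbation $r'$ still lands in $\N^{Q_1}$ and still satisfies \emph{all} rank-map inequalities, i.e. that no third monochromatic length-two path sneaks in — which is handled by the observation in the previous paragraph that a color class is a directed path, so $a_2$ has at most one monochromatic predecessor and one monochromatic successor. Everything else is bookkeeping.
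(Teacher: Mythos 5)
Your proof is correct and is essentially identical to the paper's: both argue by contradiction, assuming both strict inequalities and then bumping $r(a_2)$ by one to obtain a strictly larger rank map $r^+$, contradicting maximality. Your added verification that no other monochromatic length-two path is affected (because a color class is a directed path) is left implicit in the paper but is a sound and welcome elaboration.
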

\begin{proof}
Suppose that both $r(a_1)+r(a_2)<\beta_{x_1}$ and $r(a_2)+r(a_3)<\beta_{x_2}$.  Define by $r^+$ the rank map with $r^+(a_2)=r(a_2)+1$ and $r^+(b)=r(b)$ otherwise.  $r^+$ is a rank map and $r^+>r$, contradicting the assumption of maximality of $r$.
\end{proof}


\section{Up-and-Down Modules}\label{sec:UDM}
We will now define a module (or family of modules) $M_{Q, c}(\beta, r)$ based on two additional parameters, later proving that the isomorphism class of this module (or family of modules) is independent of these parameters.  Fix $Q, c, \beta, r, \epsilon$ as described above.  Recall that proposition \ref{note:UDGraph} guarantees $\Gamma_{Q, c}(\beta, r, \epsilon)$ is comprised of strings and bands.  Let $B(\Gamma)$ be the set of bands and fix a function $\Theta:B(\Gamma) \rightarrow \operatorname{Vert}(\Gamma_{Q, c}(\beta, r, \epsilon))$ with $\Theta(b)$ a target contained in the band $b$.
\begin{definition}
For $\lambda \in (\C^*)^{B(\Gamma)}$, denote by $M_\lambda:=M_{Q, c}(\beta, r, \epsilon, \Theta)_\lambda$ the representation of $Q$ given by the following data.  The space $(M_\lambda)_x$ is a $\beta_x$-dimensional $\C$ vector space together with a fixed basis $\{e_j^x\}_{j=1,\dotsc, \beta_x}$.  The linear map $(M_\lambda)_a: (M_\lambda)_{ta}\rightarrow (M_\lambda)_{ha}$ is defined as follows: if $v_j^{ta}$ and $v_k^{ha}$ are joined by an edge $e$ labeled $a$, then 
\begin{align*}
(M_\lambda)_a: e_j^{ta} \mapsto \begin{cases} \lambda_be_k^{ha} & \textrm{ if there is a band b with } \Theta(b)=v_k^{ha} \textrm{ and } \epsilon(ha, c(a))=1 \\ e_k^{ha} & \textrm{ otherwise.}\end{cases}
\end{align*}
If there is no such edge, then $(M_\lambda)_a:e_j^{ta}\mapsto 0$.  If there are no bands in $\Gamma_{Q, c}(\beta, r, \epsilon)$, denote by $M_{Q, c}(\beta, r, \epsilon, \Theta)$ the subset of $\Rep_{Q}(\beta)$ containing this module.  If there are bands, then denote by $M_{Q, c}(\beta, r, \epsilon, \Theta)$ the set of all modules $M_{Q, c}(\beta, r, \epsilon, \Theta)_\lambda$ for $\lambda\in (\C^*)^{B(\Gamma)}$.     
\end{definition}
\begin{example}
Continuing with example \ref{example1}, let $b$ be the unique band in $\Gamma_{Q, c}(\beta, r, \epsilon)$, and take $\Theta(b)=v_1^{(6)}$.  For $\lambda \in \C^*$, the module $M_{Q, c}(\beta, r, \epsilon, \Theta)_\lambda$ is given by the following:
\begin{figure}[h!]
\includegraphics[trim=.1in 1in .2in .6in]{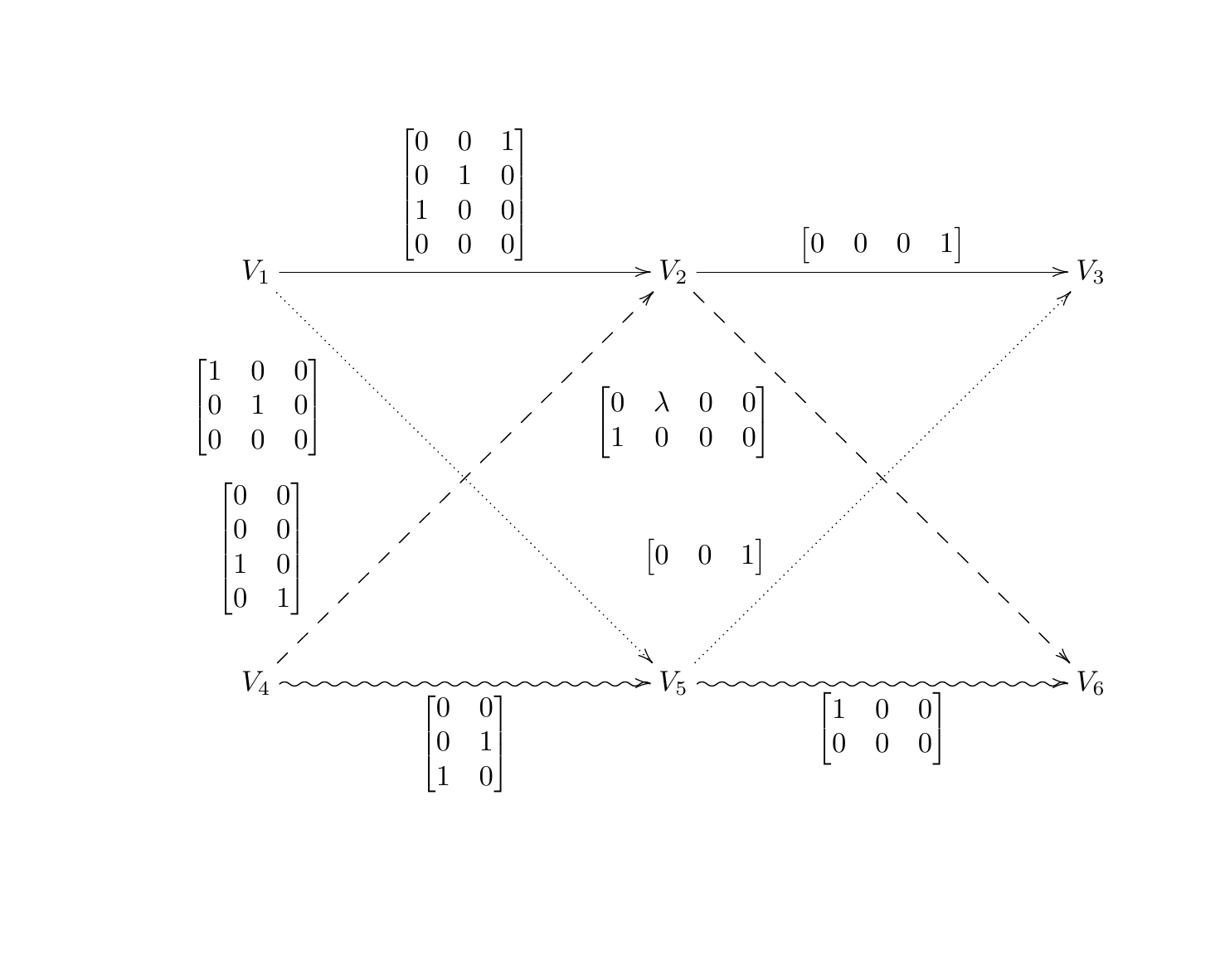}
\end{figure}
\end{example}
\begin{proposition}
Every representation in the set $M_{Q, c}(\beta, r, \epsilon, \Theta)$ is a representation of the gentle string algebra $(Q, c)$.
\end{proposition}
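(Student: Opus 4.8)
The plan is to show that for every monochromatic path $ba$ of length two in $Q$ (i.e., with $c(a)=c(b)$ and $ha=tb$), the composite $(M_\lambda)_b \circ (M_\lambda)_a$ is the zero map, since these paths generate $I_c$. First I would fix such a path $ba$ at the intermediate vertex $x = ha = tb$, and a basis vector $e_j^{x'}$ where $x' = ta$, and trace where it goes: either $(M_\lambda)_a$ kills it outright (and we are done), or $(M_\lambda)_a$ sends $e_j^{x'}$ to a (nonzero) scalar multiple of $e_k^x$, where $v_j^{x'}$ and $v_k^x$ are joined by an edge $e$ with $w(e)=a$. I then need to show $(M_\lambda)_b$ kills $e_k^x$, i.e., that $v_k^x$ is not incident to any edge labeled $b$.

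The key step is the following incidence count at level $x$. The vertex $v_k^x$ is already incident to the edge $e$ labeled $a$. Suppose toward a contradiction that $v_k^x$ is also incident to an edge $e'$ labeled $b$. Since $c(a) = c(b)$, this directly contradicts part (a) of Proposition \ref{note:UDGraph}: a vertex incident to two edges $e,e'$ must satisfy $c(w(e)) \neq c(w(e'))$. Hence no such $e'$ exists, and $(M_\lambda)_b(e_k^x) = 0$. This is the crux — everything reduces to the combinatorial fact that the up-and-down graph never places two same-colored edges at one vertex, which is exactly Proposition \ref{note:UDGraph}(a), itself a consequence of the rank-map inequality $r(a) + r(b) \le \beta_x$. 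I should also note that in the band case the scalar $\lambda_b \in \C^*$ is a unit, so it cannot rescue the composite from vanishing: the composite is zero because one of the two factors sends the relevant basis vector to $0$, independently of scalars.

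Finally I would assemble these observations: since $(M_\lambda)_b \circ (M_\lambda)_a$ sends every basis vector $e_j^{ta}$ to $0$ — either because $(M_\lambda)_a$ already does, or because the image lands on a basis vector $e_k^{ha}$ that $(M_\lambda)_b$ annihilates — we conclude $(M_\lambda)_{ba} = (M_\lambda)_b (M_\lambda)_a = 0$ for every generator $ba$ of $I_c$, and therefore $M_\lambda \in \Rep_{\C Q/I_c}(\beta)$. Running this over all $\lambda \in (\C^*)^{B(\Gamma)}$ (or the single module in the band-free case) gives the claim for the whole set $M_{Q,c}(\beta, r, \epsilon, \Theta)$.

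I do not expect a serious obstacle here; the only mild subtlety is bookkeeping the four edge-configuration cases of Definition \ref{def:UDGraph} when writing out "$v_j^{ta}$ joined to $v_k^{ha}$", but since the argument routes entirely through Proposition \ref{note:UDGraph}(a) one does not actually need to case-split on the sign function $\epsilon$ — the incidence contradiction is uniform. One should just be careful to phrase the "otherwise, sends to $0$" clause of the definition correctly so that the case where no edge labeled $a$ meets $v_j^{ta}$ is handled at the outset.
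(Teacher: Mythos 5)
Your proof is correct and follows essentially the same route as the paper: both reduce to the observation (Proposition \ref{note:UDGraph}(a)) that no vertex of $\Gamma_{Q,c}(\beta,r,\epsilon)$ is incident to two edges of the same color, so no path in $\Gamma$ realizes a monochromatic composition $ba$, hence the composite map vanishes on every basis vector. The paper states this more tersely; your version simply spells out the basis-vector bookkeeping and the observation that the band scalars $\lambda_b\in\C^*$ cannot affect vanishing, but no new idea is involved.
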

\begin{proof}
If $a_1, a_2\in Q_1$ are arrows with $ha_1=ta_2$ and $c(a_1)=c(a_2)$, then by proposition \ref{note:UDGraph} there is no path $\xymatrix{v_1 \ar@{-}[r]^{e_1} & v_2 \ar@{-}[r]^{e_2} & v_3}$ in $\Gamma$ with $w(e_1)=a_1$ and $w(e_2)=a_2$.  Therefore, $a_2a_1(e_i^x)=0$ for all $x\in Q_0,\ i=1,\dotsc, \beta_x$.  Since $I_c$ is generated by precisely these relations, each module in $M_{Q, c}(\beta, r, \epsilon, \Theta)$ is indeed a $\C Q/I_c$ module.  
\end{proof}

The definition appears highly dependent on both $\epsilon$ and the choice of distinguished vertices $\Theta$.  In the following proposition, we show that the family does not depend on $\Theta$.

\begin{proposition}
The family $M_{Q, c}(\beta, r, \epsilon, \Theta)$ does not depend on the choice of vertices $\Theta$.  
\end{proposition}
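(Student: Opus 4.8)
The plan is to exploit the decomposition of $M_{Q,c}(\beta,r,\epsilon,\Theta)_\lambda$ as a direct sum over the connected components of $\Gamma=\Gamma_{Q,c}(\beta,r,\epsilon)$: for a component $\gamma$ the vectors $e_j^x$ with $v_j^x\in\gamma$ span a subrepresentation, and $M_{Q,c}(\beta,r,\epsilon,\Theta)_\lambda$ is the direct sum of these, since a structure map sends $e_j^x$ to a scalar multiple of $e_k^y$ only when $v_j^x$ and $v_k^y$ are joined by an edge. Replacing $\Theta$ by another choice $\Theta'$ affects only the matrix entries attached to edges incident to some $\Theta(b)$, and such an edge lies in the band component $\gamma_b$ of $b$. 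Hence it suffices to compare two functions $\Theta,\Theta'$ that agree off a single band $b_0$, to produce for every $\lambda\in(\C^*)^{B(\Gamma)}$ an isomorphism $M_{Q,c}(\beta,r,\epsilon,\Theta)_\lambda\cong M_{Q,c}(\beta,r,\epsilon,\Theta')_\mu$ with $\mu_b=\lambda_b$ for $b\neq b_0$ and with $\lambda_{b_0}\mapsto\mu_{b_0}$ a bijection of $\C^*$, and then to iterate over the finitely many bands to pass between arbitrary $\Theta$ and $\Theta'$.

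Next I would analyze the band component $\gamma_0$ of $b_0$, which by Proposition~\ref{note:UDGraph} is itself an up-and-down graph forming a single cycle in which every vertex lies on exactly two edges. The target $\Theta(b_0)$ pins down a unique scaled edge: its two incident edges carry distinct colors by Proposition~\ref{note:UDGraph}(a), and the sign-function axiom then forces exactly one of the corresponding arrows $a$ to satisfy $\epsilon(ha,c(a))=1$; along that edge $e_\Theta$ the structure map is multiplication by $\lambda_{b_0}$, while every other structure map on $\gamma_0$ is a basis map with coefficient $1$ (the map attached to an edge labelled $a$ runs from level $ta$ to level $ha$). Similarly $\Theta'(b_0)$ determines a scaled edge $e_{\Theta'}$, and off $\gamma_0$ the representations $M_{Q,c}(\beta,r,\epsilon,\Theta)_\lambda$ and $M_{Q,c}(\beta,r,\epsilon,\Theta')_\mu$ coincide once $\mu_b=\lambda_b$ there.

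The isomorphism is then a diagonal change of basis: the identity on every component $\gamma\neq\gamma_0$, and on $\gamma_0$ the map $e_j^x\mapsto\mu(v_j^x)\,e_j^x$ for scalars $\mu(\cdot)\in\C^*$. Traversing the cycle $\gamma_0$ once, the requirement that this commute with the structure maps forces the ratio of the scalars at the two ends of each edge to be $1$ along every edge except $e_\Theta$ and $e_{\Theta'}$; such scalars exist, uniquely up to an overall constant, exactly when the product of the prescribed ratios around the cycle equals $1$, and solving this pins down $\mu_{b_0}\in\{\lambda_{b_0},\lambda_{b_0}^{-1}\}$, the exponent recording whether $e_\Theta$ and $e_{\Theta'}$ are traversed in the same or opposite sense. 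In either case $\lambda_{b_0}\mapsto\mu_{b_0}$ is a bijection of $\C^*$, so the two families agree; the reduction in the first paragraph then finishes the argument. The substantial content is the edge-by-edge verification that the diagonal map is a morphism, which is formal; I expect the only real obstacle to be the orientation bookkeeping around the cycle — keeping track, edge by edge, of which endpoint is the head, and hence of how the $\epsilon$-condition in the definition of the module enters — along with checking that $\Theta(b_0)$ genuinely determines a single scaled edge.
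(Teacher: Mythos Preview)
Your argument is correct and genuinely different from the paper's. The paper does not build the isomorphism by hand; instead it identifies each band summand $M_{b,\lambda_b}$ with $F_{\omega_b}(k,\lambda_b)$ in the sense of Butler--Ringel and then invokes their theorem that the image of the functor $F_{\omega_b}$ is unchanged under cyclic rotation of the word $\omega_b$, so that moving $\Theta(b)$ along the band leaves the one-parameter family of isoclasses intact. Your approach instead stays entirely inside the up-and-down construction: you reduce to a single band, pin down the unique scaled edge via Proposition~\ref{note:UDGraph}(a) together with the sign-function axiom (this step is exactly right: the two incoming arrows at the $2$-target $\Theta(b_0)$ carry distinct colours, hence opposite $\epsilon$-values at that vertex), and then solve for a diagonal gauge transformation. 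The telescoping constraint around the cycle indeed forces $\mu_{b_0}=\lambda_{b_0}^{\pm 1}$, the sign being $\sigma(e_\Theta)\sigma(e_{\Theta'})$ in your notation, and either value gives a bijection of $\C^*$, so the families coincide.

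What each approach buys: the paper's proof is a two-line appeal to an existing structural result, at the cost of needing the reader to match the present construction against Butler--Ringel's conventions (and, strictly speaking, of leaving implicit why a change of $\Theta$ corresponds only to a cyclic shift of $\omega_b$). Your argument is self-contained and makes the isomorphism completely explicit, which is pedagogically cleaner and also makes visible the possible parameter inversion $\lambda\mapsto\lambda^{-1}$; the price is the orientation bookkeeping you already flagged. Both are valid routes to the same conclusion.
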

\begin{proof}
This proof is a simple consequence of \cite{BR} (cf. Theorem page 161).  Indeed, suppose that $b$ is a band component of some $M_{Q, c}(\beta, r, \epsilon, \Theta)_\lambda$.  Denote by $M_{b,\lambda_b}$ the submodule corresponding to this band, and $\omega_b$ a cyclic word in $Q_1 \cup Q_1^{-1}$ which yields this band.  Recall that Butler and Ringel produce, for each such cyclic word, a functor $F_{\omega_b}$ from the category of pairs $(V, \varphi)$ with $V$ a $k$-vector space and $\varphi: V \rightarrow V$ and automorphism, to the category $\Rep_{Q, c}$.  The indecomposable module $M_{b, \lambda_b}$ is isomorphic to the image under this functor of the pair $(k, \lambda_b)$ where $\lambda_b: x \mapsto \lambda_b \cdot x$.  Butler and Ringel show that the family $M_{b, \lambda_b}$ for $\lambda_b \in \C^*$ is independent of cyclic permutation of the word $\omega_b$.  (They show the image of the functor $F_{\omega_b}$ itself is independent cyclic permutations of $\omega_b$.)  Therefore, any choice of vertices $\Theta$ yields the same family $M_{Q, c}(\beta, r, \epsilon)$.  
\end{proof}
Henceforth, we drop the argument $\Theta$, and when necessary we make a particular choice of said function. 

\subsection{Main Theorem and Consequences}
The following statement constitutes the main theorem of the article, although its corollaries are the applicable results.  In particular, it allows us to show that the representations $M_{Q, c}(\beta, r)$ are generic.  The remainder of the article will be primarily concerned with proving this theorem.
\begin{theorem}\label{thm:vanishingextbands} Let $B(\Gamma)$ be the set of bands for the graph $\Gamma_{Q, c}(\beta, r, \epsilon).$
\begin{itemize}
\item[a.] Suppose that $\lambda, \lambda' \in (\C^*)^{B(\Gamma)}$ with $\lambda_b\neq \lambda'_{b'}$ for all $b,b'\in B(\Gamma)$.  Then $$\dim \Ext^1_{\C Q/I_c}(M_{Q, c}(\beta, r, \epsilon)_\lambda, M_{Q, c}(\beta, r, \epsilon)_{\lambda'})=0.$$  
\item[b.] Suppose that $\Gamma_{Q, c}(\beta, r, \epsilon)$ consists of a single band component.  Then $$\dim \Ext^1_{\C Q/I_c}(M_{Q, c}(\beta, r, \epsilon)_\lambda, M_{Q, c}(\beta, r, \epsilon)_\lambda) = 1.$$
\end{itemize}
\end{theorem}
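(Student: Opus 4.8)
The strategy is to compute $\Ext^1$ from an explicit projective resolution of the string/band module $M_\lambda = M_{Q,c}(\beta,r,\epsilon)_\lambda$. Since the graph $\Gamma_{Q,c}(\beta,r,\epsilon)$ is built from string and band walks (Proposition \ref{note:UDGraph}), the module decomposes into string and band summands in the sense of Butler--Ringel \cite{BR}, and $\Ext^1$ between such modules over a gentle (special biserial) algebra is governed by the combinatorics of arrows/inverse arrows along the walks. For part (a), the main point is that a class in $\Ext^1(M_\lambda,M_{\lambda'})$ corresponds to a non-split exact sequence, hence (by the classification of indecomposables over a string algebra) to an indecomposable middle term that is again a string or band module; one traces which ``overlap'' and ``arrow'' configurations between the underlying walks of $M_\lambda$ and $M_{\lambda'}$ could produce such a term. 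The only configurations capable of producing a non-trivial extension between two \emph{band} modules on the \emph{same} underlying cyclic walk are the ones that force $\lambda_b = \lambda'_{b'}$; the hypothesis $\lambda_b \neq \lambda'_{b'}$ for all $b,b'$ kills exactly these, and the string–band and string–string overlaps are already ruled out by the specific up-and-down structure (the walks coming from a \emph{maximal} rank map $r$ are as long as possible, so no proper overlap extension survives — this is where Lemmas \ref{lem:rightleftconnection}, \ref{note:corners}, \ref{lem:maximalrank} enter, controlling which extremal vertices occur in which levels).

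For part (b), $\Gamma$ is a single band $b$, so $M_\lambda = M_{b,\lambda}$ is the band module attached to a cyclic word $\omega_b$ and the parameter $(\C^*, \lambda)$. Here I would argue as follows. First, reduce to a homological computation: $\dim\Ext^1(M_\lambda,M_\lambda) = \dim\Hom(M_\lambda,M_\lambda) - \langle \underline{\dim} M_\lambda, \underline{\dim} M_\lambda\rangle$ via the Euler form, \emph{provided} the global dimension or at least $\Ext^{\geq 2}(M_\lambda,M_\lambda)$ is under control — but gentle string algebras need not be hereditary, so instead I would use the explicit minimal projective resolution of a band module. The cleanest route: a band module over a string algebra is a ``periodic'' object whose projective resolution is built from the syzygies read off the cyclic walk, and for a band one has $\Omega^2 M_\lambda \cong M_\lambda$ up to the relevant identification, or more precisely the resolution is eventually periodic of period dividing $2$. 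One then computes $\Ext^1$ directly as the cokernel/kernel of the induced map $\Hom(P_1, M_\lambda) \to \Hom(P_2, M_\lambda)$, or one invokes the known fact (Butler--Ringel, or the AR-theory of string algebras) that a band module $M_\lambda$ lies in a homogeneous tube, so $\Ext^1(M_\lambda, M_\lambda) \cong D\Hom(M_\lambda, \tau M_\lambda) = D\Hom(M_\lambda, M_\lambda)$, and $\End(M_\lambda) \cong \C$ because $(\C,\lambda)$ has scalar endomorphism ring and $F_{\omega_b}$ is fully faithful. Hence $\dim\Ext^1(M_\lambda,M_\lambda) = \dim\End(M_\lambda) = 1$.

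Concretely the steps are: (1) recall the Butler--Ringel description of $M_\lambda$ as $F_{\omega_b}(\C,\lambda)$ and that $F_{\omega_b}$ is fully faithful on the relevant subcategory, so $\End_{\C Q/I_c}(M_\lambda) \cong \End_{\C[t,t^{-1}]/(t-\lambda)}(\C) = \C$; (2) establish that $M_\lambda$ is $\tau$-periodic with $\tau M_\lambda \cong M_\lambda$ (band modules over string algebras sit in homogeneous tubes — cite \cite{BR} or reprove via the explicit resolution from the cyclic walk); (3) apply the Auslander--Reiten formula $\Ext^1(M_\lambda,M_\lambda) \cong D\overline{\Hom}(M_\lambda,\tau M_\lambda)$, noting the projective-stable correction term vanishes because the identity on $M_\lambda$ does not factor through a projective (as $M_\lambda$ is non-projective indecomposable), to get $\dim\Ext^1(M_\lambda,M_\lambda) = \dim\End(M_\lambda) = 1$.

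**Main obstacle.** The delicate point is not the band–band self-extension in (b) — that is essentially AR-theory for string algebras — but rather justifying in (a) that \emph{maximality of $r$} precludes all the ``proper overlap'' extensions between the various string and band constituents; i.e. that the walks arising in an up-and-down graph from a maximal rank map are long enough that the only surviving $\Ext^1$-contributions on a fixed band are the period-detecting ones, which are then annihilated by the $\lambda_b\neq\lambda'_{b'}$ hypothesis. Pinning this down requires the combinatorial Lemmas \ref{lem:rightleftconnection}--\ref{lem:maximalrank} to show that every extremal (source/target) vertex of $\Gamma$ sits in a level forced by the rank inequalities, so that no arrow can be prepended or appended to a walk to create an overlap — this is where the bulk of the real work lies, and I expect it to be handled via the explicit projective resolution the paper announces it will construct.
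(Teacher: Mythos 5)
Your proposal splits neatly into two cases of unequal completeness.

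For part (b) you take a genuinely different route from the paper. The paper obtains $\dim\Ext^1(M_\lambda,M_\lambda)=1$ by explicitly writing the projective resolution, forming the $\EXT$-graph, and then doing a column-reduction on the single cyclic block to show the induced map has corank exactly one (the determinant of the band block is $\pm(\lambda-\mu)$, so setting $\mu=\lambda$ drops the rank by one). You instead invoke Auslander--Reiten theory: band modules over a string algebra lie in homogeneous tubes, so $\tau M_\lambda\cong M_\lambda$; the band is primitive, so $\End(M_\lambda)\cong\C$ via the Butler--Ringel functor; and the AR formula $\Ext^1(M_\lambda,M_\lambda)\cong D\overline{\Hom}(M_\lambda,\tau M_\lambda)$ then gives the answer at once (the stable correction vanishes because $M_\lambda$ is indecomposable and neither projective nor injective). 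This is correct, and arguably cleaner for (b) in isolation; what it buys is a citation-level argument, at the cost of importing AR theory instead of the elementary linear algebra that the paper's $\EXT$-graph formalism already sets up for both parts simultaneously. Note a small slip: with the formula as you wrote it, the correction is injective-stable, not projective-stable, though the conclusion is unaffected.

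For part (a), however, your proposal is a plan rather than a proof. You correctly identify that the argument should go through an explicit projective resolution, that extensions are governed by overlap configurations of walks, and that maximality of the rank map $r$ is what forbids the string--string and string--band overlaps. But you then explicitly defer the execution: \emph{``Pinning this down requires the combinatorial Lemmas \ref{lem:rightleftconnection}--\ref{lem:maximalrank} \dots I expect it to be handled via the explicit projective resolution the paper announces it will construct.''} That is precisely the content of the paper's Proposition \ref{prop:projectiveresolution} together with the $\EXT$-graph and Lemmas \ref{lem:noisolatedvertices}, \ref{lem:stringandband}, \ref{lem:stringandbandtypes}, \ref{lem:surjectivestrings}, \ref{lem:bandexactness}. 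In particular, the genuinely hard facts — that every vertex in $\EXT(1)$ is non-isolated, that no string in $\EXT$ has both endpoints in level $1$ (so each string block is surjective), and that the band blocks are invertible exactly when $\lambda\neq\mu$ — are not established in your proposal; they are merely announced as things that would need to be checked. An informal appeal to ``the walks are as long as possible so no proper overlap survives'' does not by itself rule out, for instance, extensions supported on a $2$-target or a $1$-source lying at the wrong level, which is precisely what Lemmas \ref{note:corners} and \ref{lem:maximalrank} are used to exclude. So part (a) of your proposal identifies the right strategy but leaves the actual proof untouched.
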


\begin{corollary}
If $B(\Gamma)=\emptyset$, i.e., $\Gamma_{Q, c}(\beta, r, \epsilon)$ consists only of strings, then the unique element $M\in M_{Q, c}(\beta, r)$ has a Zariski open orbit in $\Rep_{Q, c}(\beta, r)$.
\end{corollary}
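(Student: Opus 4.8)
The plan is to derive this corollary directly from part (a) of Theorem~\ref{thm:vanishingextbands} together with the general principle, recalled in the introduction (see \cite{G},\cite{Voigt}), that a representation with vanishing self-extensions has a Zariski open $\GL(\beta)$-orbit in its irreducible component. So the whole argument is just a matter of assembling pieces: (i) read off $\Ext^1_{\C Q/I_c}(M,M)=0$ from the main theorem in the band-free case, (ii) check that $M$ actually lies in the component $\Rep_{Q,c}(\beta,r)$, and (iii) quote Voigt's lemma.

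For step (i): when $B(\Gamma)=\emptyset$ the parameter space $(\C^*)^{B(\Gamma)}$ is a single point, so $M_{Q,c}(\beta,r)=\{M\}$ consists of one representation, and the hypothesis ``$\lambda_b\neq\lambda'_{b'}$ for all $b,b'\in B(\Gamma)$'' of Theorem~\ref{thm:vanishingextbands}(a) is vacuously satisfied by the unique pair $\lambda=\lambda'$, with $M_\lambda=M_{\lambda'}=M$. Hence that theorem gives
\[
\dim\Ext^1_{\C Q/I_c}(M,M)=0.
\]
This is the only substantive input; everything else is bookkeeping. For step (ii): I would observe that by construction each structure map $M_a$ sends a basis vector $e_j^{ta}$ either to a scalar multiple of some $e_k^{ha}$ (exactly when $v_j^{ta}$ and $v_k^{ha}$ are joined by an edge labelled $a$) or to $0$; since Definition~\ref{def:UDGraph} produces exactly $r(a)$ edges labelled $a$ and, by Proposition~\ref{note:UDGraph}, no vertex lies on two edges of the same colour, these edges match $r(a)$ distinct basis vectors at level $ta$ with $r(a)$ distinct basis vectors at level $ha$. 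Thus $\rank M_a=r(a)$ for every arrow $a$, so $M\in\Rep_{Q,c}(\beta,r)$; and since $r$ is a maximal rank map, Proposition~\ref{prop:irrcomponents} says $\Rep_{Q,c}(\beta,r)$ is an irreducible component of $\Rep_{\C Q/I_c}(\beta)$.

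For step (iii): from $\Ext^1_{\C Q/I_c}(M,M)=0$ the cited results of \cite{G},\cite{Voigt} give that $M$ is a nonsingular point of $\Rep_{\C Q/I_c}(\beta)$, hence lies on a unique irreducible component, and that its $\GL(\beta)$-orbit is open in that component. By step (ii) the component in question is $\Rep_{Q,c}(\beta,r)$, so $\GL(\beta)\cdot M$ is Zariski open in $\Rep_{Q,c}(\beta,r)$, which is the assertion. (Should one prefer a self-contained argument, one could instead compare $\dim\GL(\beta)-\dim\operatorname{End}_{\C Q/I_c}(M)$ with the dimension of $\Rep_{Q,c}(\beta,r)$ computed as a product of varieties of complexes in the sense of \cite{DCS},\cite{CW}, but the deformation-theoretic route above is shorter.)

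Because Theorem~\ref{thm:vanishingextbands} already carries all the weight, I do not expect a real obstacle in the corollary itself. The one point that merits a word of care is that the $\Ext^1$ occurring in Theorem~\ref{thm:vanishingextbands} is taken over the algebra $\C Q/I_c$ itself, which is precisely what the openness criterion of \cite{G},\cite{Voigt} requires; in particular no finiteness-of-global-dimension hypothesis on $\C Q/I_c$ is needed, which matters here since our gentle string algebras are permitted to have oriented cycles.
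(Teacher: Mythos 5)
Your proof is correct and follows essentially the same route as the paper: invoke Theorem~\ref{thm:vanishingextbands}(a) in the band-free case to get $\Ext^1_{\C Q/I_c}(M,M)=0$, then apply the openness criterion of \cite{G},\cite{Voigt}. The only addition is your step (ii), where you verify that $\rank M_a=r(a)$ so that $M$ really lies in the component $\Rep_{Q,c}(\beta,r)$; the paper leaves this implicit, and your check is a harmless and reasonable piece of bookkeeping.
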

\begin{proof}
If there are indeed no band components in $\Gamma_{Q, c}(\beta, r, \epsilon)$, then by theorem \ref{thm:vanishingextbands} part (a), we have $\Ext^1_{\C Q/I_c}(M, M) = 0$.  The corollary then follows by (\cite{G} Corollary 1.2, \cite{Voigt}).  
\end{proof}

 If there are band components, then the analogous corollary is more subtle, although the result is essentially the same.  Namely that the union of the orbits of all elements in $M_{Q,c}(\beta, r)$ is dense in its irreducible component.   The proof relies on some auxiliary results due to Crawley Boevey-Schr\"{o}er \cite{CBS}, and so we exhibit those first.  Let $\C Q/I$ be an arbitrary quiver with relations.   Suppose that $C_i \subset \Rep_{\C Q/I}(\beta(i))$ are $\GL(\beta(i))$-stable subsets for some collection of dimension vectors $\beta(i)$, $i=1,\dotsc, t$, and denote by $\beta = \sum_i \beta(i)$ the sum of the dimension vectors.  Define by $C_1 \oplus \dotsc \oplus C_t$ the $\GL(\beta)$-stable subset of $\Rep_{\C Q/I}(\beta)$ given by the set of all $\GL(\beta)$ orbits of direct sums $M_1 \oplus \dotsc\oplus M_t$ with $M_i \in C_i$.  
 \begin{theorem}[Theorem 1.2 in \cite{CBS}]
For an algebra $\C Q/I$, $C_i \subset \Rep_{\C Q/I}(\beta(i))$ irreducible components and $t$ defined as above, the set $\overline{C_1 \oplus \dotsc \oplus C_t}$ is an irreducible component of $\Rep_{\C Q/I}(\beta)$ if and only if $$\operatorname{ext}^1_{\C Q/I}(C_i, C_j)
=0$$ for all $i\neq j$.  
 \end{theorem}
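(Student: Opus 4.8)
This is the component criterion of Crawley-Boevey and Schr\"{o}er, and a proof can be organised as follows. Write $\Lambda$ for $\C Q/I$ and $Z:=\overline{C_1\oplus \dotsc \oplus C_t}\subseteq \Rep_{\Lambda}(\beta)$. The first point is that $Z$ is automatically closed, $\GL(\beta)$-stable, and irreducible, being the closure of the image of the action morphism $\GL(\beta)\times C_1\times \dotsc \times C_t\to \Rep_{\Lambda}(\beta)$, $(g,M_1,\dotsc,M_t)\mapsto g\cdot(M_1\oplus \dotsc \oplus M_t)$, whose source is irreducible. Since irreducible components are $\GL(\beta)$-stable, and the closure of the $\GL(\beta)$-saturation of any irreducible closed set is again irreducible, ``$Z$ is an irreducible component'' is equivalent to ``no irreducible $\GL(\beta)$-stable closed subset properly contains $Z$''. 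The plan is: (i) reduce to $t=2$; (ii) compute $\dim Z$ and the dimension of an ``extension variety'' attached to a pair of summands; (iii) use (ii) to produce such a proper overset when some cross-extension is nonzero, and to rule one out otherwise.

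\emph{Reduction to $t=2$.} One has $\overline{C_1\oplus \dotsc \oplus C_t}=\overline{\,\overline{C_1\oplus \dotsc \oplus C_{t-1}}\oplus C_t\,}$ (the two closures agree), and the generic value $\ext^1_{\Lambda}(-,-)$ is additive over direct sums in each argument: $\ext^1_{\Lambda}(\overline{A\oplus B},D)=\ext^1_{\Lambda}(A,D)+\ext^1_{\Lambda}(B,D)$, and symmetrically. This holds because for a jointly generic triple $(M_A,M_B,M_D)$ the module $M_A\oplus M_B$ represents a generic point of $\overline{A\oplus B}$, $\Ext^1_{\Lambda}(M_A\oplus M_B,M_D)=\Ext^1_{\Lambda}(M_A,M_D)\oplus \Ext^1_{\Lambda}(M_B,M_D)$, and $\dim \Ext^1_{\Lambda}(-,-)$ is upper semicontinuous, so its generic value is its minimum. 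Granting $t=2$, the general case follows by induction; the ``only if'' half below in fact works uniformly in $t$.

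\emph{Two dimension formulas.} Take $C_1\subseteq \Rep_{\Lambda}(\beta(1))$, $C_2\subseteq \Rep_{\Lambda}(\beta(2))$ irreducible components with $\beta=\beta(1)+\beta(2)$, and write $\hom(\cdot,\cdot)$, $\ext^1(\cdot,\cdot)$ for the generic values of $\dim \Hom_{\Lambda}$, $\dim \Ext^1_{\Lambda}$. Computing the dimension of a generic fibre of the action morphism $\GL(\beta)\times C_1\times C_2\to \Rep_{\Lambda}(\beta)$ gives
\begin{equation*}
\dim Z=\dim C_1+\dim C_2+\dim \GL(\beta)-\dim \GL(\beta(1))-\dim \GL(\beta(2))-\hom(C_1,C_2)-\hom(C_2,C_1).
\end{equation*}
Let $E(C_1,C_2)\subseteq \Rep_{\Lambda}(\beta)$ be the closure of the set of middle terms $M$ of short exact sequences $0\to M_1\to M\to M_2\to 0$ with $M_1\in C_1$, $M_2\in C_2$. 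It is closed, $\GL(\beta)$-stable, irreducible (an image closure of a morphism whose source is irreducible---a vector bundle of rank $\ext^1(C_2,C_1)$ over a dense open subset of $\GL(\beta)\times C_1\times C_2$), and contains $Z$ (split sequences). The analogous generic-fibre count, with the parametrising family enlarged in the $\ext^1(C_2,C_1)$ directions coming from $\Ext^1_{\Lambda}(M_2,M_1)$, yields
\begin{equation*}
\dim E(C_1,C_2)=\dim Z+\ext^1_{\Lambda}(C_2,C_1),
\end{equation*}
and symmetrically $\dim E(C_2,C_1)=\dim Z+\ext^1_{\Lambda}(C_1,C_2)$.

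\emph{The two implications, and the main obstacle.} If $\ext^1_{\Lambda}(C_i,C_j)\neq 0$ for some $i\neq j$, form the irreducible $\GL(\beta)$-stable closed set obtained from $Z$ by replacing the two summands $C_i$, $C_j$ with the single variety $E(C_j,C_i)\subseteq \Rep_{\Lambda}(\beta(i)+\beta(j))$; it contains $Z$ and has strictly larger dimension by the displayed formula, so $Z$ is not a component. Conversely, assume $\ext^1_{\Lambda}(C_i,C_j)=0$ for all $i\neq j$; by (i) and the additivity above we may take $t=2$, and we must show no irreducible $\GL(\beta)$-stable closed $Y$ properly contains $Z$. The plan is to pass to a minimal such $Y$, compare a generic module $N$ of $Y$ with a generic module $M_1\oplus M_2$ of $Z$ lying in $\overline{\GL(\beta)\cdot N}$, and apply the Riedtmann--Zwara/Bongartz analysis of minimal module degenerations to extract a nonsplit short exact sequence whose end terms are generic modules drawn from summand components of $C_1$ and of $C_2$ --- impossible, since vanishing of $\ext^1_{\Lambda}(C_1,C_2)$ and $\ext^1_{\Lambda}(C_2,C_1)$ forces vanishing of the finer cross-extensions between the indecomposable generic summands. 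I expect the main obstacle to lie precisely here, together with the rigorous justification of the extension-variety dimension formula. For the latter, since $\dim \Ext^1_{\Lambda}(M_2,M_1)$ is only upper semicontinuous one must restrict the parametrising family to the locus where it is minimal and verify that a generic middle term reconstructs its defining sequence up to the evident symmetries (this uses control of the submodule lattice of a generic $M$). For the former, the step that cannot be made soft is ``every irreducible $\GL(\beta)$-stable closed overset of $Z$ is captured by an extension variety'': dropping the $\ext^1$-vanishing hypothesis genuinely produces strictly larger irreducible subsets, so this must rely on the structure theory of degenerations rather than a dimension count alone.
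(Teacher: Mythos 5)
The paper does not prove this statement: it is imported verbatim as Theorem 1.2 of Crawley-Boevey--Schr\"{o}er, so there is no internal proof to compare against. Evaluating your attempt on its own terms: the setup, the reduction to $t=2$ via upper semicontinuity of $\dim\Ext^1$, and the direction ``$Z$ a component $\Rightarrow$ $\ext^1(C_i,C_j)=0$ for $i\neq j$'' via the extension variety $E(C_j,C_i)$ are a sound and standard line of attack. Your generic-fibre computation of $\dim Z$ is correct, and the irreducibility of $E$ from the parametrizing morphism is fine, modulo the dimension formula $\dim E=\dim Z+\ext^1$, whose justification (reconstruction of the defining short exact sequence from a generic middle term) you correctly flag as needing care.

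The genuine gap is the converse direction, which you acknowledge you cannot close: assuming $\ext^1(C_i,C_j)=0$ for all $i\neq j$, show no irreducible $\GL(\beta)$-stable closed set properly contains $Z$. The plan via Riedtmann--Zwara does not go through as sketched, for two concrete reasons. First, a strict inclusion $Z\subsetneq Y$ of $\GL(\beta)$-stable irreducible closed subsets does not by itself produce a degeneration of a generic module $N$ of $Y$ to a generic module $M_1\oplus M_2$ of $Z$: the orbit closure $\overline{\mathcal{O}_N}$ need not equal $Y$, and need not meet the dense subset of $Z$ consisting of ``generic'' direct sums at all, so there is no single $N$ with $M_1\oplus M_2\in\overline{\mathcal{O}_N}$ to feed into the degeneration theory. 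Second, even granting such a degeneration, Riedtmann--Zwara yields a short exact sequence of the form $0\to X\to N\oplus X\to M_1\oplus M_2\to 0$ for some auxiliary $X$; extracting from this a \emph{nonsplit} short exact sequence between modules lying in $C_1$ and in $C_2$ is precisely the hard combinatorial content, and it is not a formal consequence. The Crawley-Boevey--Schr\"{o}er proof of this implication factors through their Theorem~1.1, the canonical decomposition of an arbitrary irreducible component into indecomposable irreducible components with pairwise vanishing generic $\Ext^1$; that is the substantive ingredient your sketch does not reproduce, and no amount of dimension counting or Riedtmann--Zwara alone substitutes for it.
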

 \begin{corollary}\label{cor:bandcomponents}
 In general, $\bigcup\limits_{\lambda \in (\C^*)^{B(\Gamma)}} \mathcal{O}_{M_\lambda}$ is dense in $\Rep_{Q, c}(\beta, r)$.
 \end{corollary}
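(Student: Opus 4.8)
The plan is to prove the corollary by reducing it, component by component, to the two halves of Theorem~\ref{thm:vanishingextbands}, and then reassembling the pieces with the direct-sum formalism of Crawley-Boevey and Schr\"{o}er. Write $\Gamma := \Gamma_{Q,c}(\beta,r,\epsilon)$ and decompose it into connected components $\Gamma = \gamma_1 \sqcup \cdots \sqcup \gamma_k$. By Proposition~\ref{note:UDGraph}(c) each $\gamma_l$ is itself an up-and-down graph $\Gamma_{Q,c}(\beta(l),r(l),\epsilon)$, where $\beta(l)_x$ counts the vertices of $\gamma_l$ at level $x$ and $r(l)(a)$ its edges labelled $a$; counting vertices and edges gives $\beta = \sum_l \beta(l)$ and $r = \sum_l r(l)$. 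The first thing I would check is that each $r(l)$ is a \emph{maximal} rank map for $\beta(l)$: a strictly larger rank map $r(l)'$ for $\beta(l)$ would make $r(l)' + \sum_{m\neq l} r(m)$ a rank map for $\beta$ strictly larger than $r$ (for each monochromatic path $a_2a_1$ one simply adds the inequalities $r(m)(a_2)+r(m)(a_1)\leq\beta(m)_{ha_1}$ over the components), contradicting maximality of $r$. By Proposition~\ref{prop:irrcomponents}, $C_l := \Rep_{Q,c}(\beta(l),r(l))$ is then an irreducible component of $\Rep_{\C Q/I_c}(\beta(l))$, and the module(s) $M_{Q,c}(\beta(l),r(l))$ attached to $\gamma_l$ lie in $C_l$ --- a single string module $M_l$ if $\gamma_l$ is a string, and a one-parameter family $M_{\gamma_l,\mu}$ ($\mu\in\C^*$) if $\gamma_l$ is a band --- since the structure map along an arrow $a$ has rank exactly $r(l)(a)$.

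The next step is to identify $\Rep_{Q,c}(\beta,r)$ with $\overline{C_1\oplus\cdots\oplus C_k}$. Because $\dim\Ext^1$ is upper semicontinuous, $\ext^1_{\C Q/I_c}(C_i,C_j) = 0$ as soon as a single pair $(N,N')\in C_i\times C_j$ has no extensions between its members; taking $N,N'$ among the modules $M_l$, $M_{\gamma_l,\mu}$, the group $\Ext^1_{\C Q/I_c}(N,N')$ occurs as a direct summand of $\Ext^1_{\C Q/I_c}(M_{Q,c}(\beta,r,\epsilon)_\lambda, M_{Q,c}(\beta,r,\epsilon)_{\lambda'})$ for $\lambda,\lambda'$ chosen with pairwise distinct coordinates, and that group vanishes by Theorem~\ref{thm:vanishingextbands}(a). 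Thus $\ext^1(C_i,C_j) = 0$ for all $i\neq j$, so by Theorem~1.2 of \cite{CBS} the set $\overline{C_1\oplus\cdots\oplus C_k}$ is an irreducible component of $\Rep_{\C Q/I_c}(\beta)$. Since a block-diagonal direct sum has rank $\sum_l r(l)(a) = r(a)$ along each $a$, we have $C_1\oplus\cdots\oplus C_k \subseteq \Rep_{Q,c}(\beta,r)$, the latter being a closed irreducible component (Proposition~\ref{prop:irrcomponents}); two irreducible components one of which contains the other must coincide, so $\overline{C_1\oplus\cdots\oplus C_k} = \Rep_{Q,c}(\beta,r)$.

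Now I would produce a dense ``generic locus'' $C_l^{\circ}\subseteq C_l$ in each component. If $\gamma_l$ is a string, then $\Ext^1_{\C Q/I_c}(M_l,M_l) = 0$ --- it is a direct summand of $\Ext^1_{\C Q/I_c}(M_{Q,c}(\beta,r,\epsilon)_\lambda, M_{Q,c}(\beta,r,\epsilon)_{\lambda'})$, which vanishes for a distinct-coordinate choice by Theorem~\ref{thm:vanishingextbands}(a) --- so by \cite{G},\cite{Voigt} the orbit $C_l^{\circ}:=\mathcal{O}_{M_l}$ is open and dense in $C_l$. If $\gamma_l$ is a band, set $C_l^{\circ}:=\bigcup_{\mu\in\C^*}\mathcal{O}_{M_{\gamma_l,\mu}}$, the constructible image of the morphism $\C^*\times\GL(\beta(l))\to\Rep_{\C Q/I_c}(\beta(l))$, $(\mu,g)\mapsto g\cdot M_{\gamma_l,\mu}$. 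Its generic fibre is finite in the $\mu$-direction (for fixed $\mu$ only finitely many $\mu'$ give $M_{\gamma_l,\mu'}\cong M_{\gamma_l,\mu}$, by \cite{BR}) and a stabilizer coset in the $g$-direction, so $\dim\overline{C_l^{\circ}} = 1 + \dim\mathcal{O}_{M_{\gamma_l,\mu}}$ for generic $\mu$. Applying Theorem~\ref{thm:vanishingextbands}(b) to the single-band graph $\gamma_l$ gives $\dim\Ext^1_{\C Q/I_c}(M_{\gamma_l,\mu},M_{\gamma_l,\mu})=1$, and the standard bound $\operatorname{codim}_{C_l}\mathcal{O}_M \leq \dim\Ext^1_{\C Q/I_c}(M,M)$ for $M$ in an irreducible component (\cite{G},\cite{Voigt}) yields $\dim\mathcal{O}_{M_{\gamma_l,\mu}} \geq \dim C_l - 1$; hence $\dim\overline{C_l^{\circ}} \geq \dim C_l$ and $C_l^{\circ}$ is dense in the irreducible $C_l$.

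Finally I would glue. Every direct sum $N_1\oplus\cdots\oplus N_k$ with $N_l\in C_l^{\circ}$ is isomorphic to $M_{Q,c}(\beta,r,\epsilon)_\lambda$, where $\lambda$ records, for each band component, the orbit parameter of the corresponding $N_l$; conversely each $M_\lambda$ arises so. Therefore $\bigcup_{\lambda\in(\C^*)^{B(\Gamma)}}\mathcal{O}_{M_\lambda} = C_1^{\circ}\oplus\cdots\oplus C_k^{\circ}$, the image of $\GL(\beta)\times\prod_l C_l^{\circ}$ under the direct-sum morphism. Since $\prod_l C_l^{\circ}$ is dense in $\prod_l C_l$ and the image of a dense subset under a morphism has the same closure as the image of the whole, $\overline{\bigcup_\lambda \mathcal{O}_{M_\lambda}} = \overline{C_1\oplus\cdots\oplus C_k} = \Rep_{Q,c}(\beta,r)$, as desired. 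The one genuinely substantive step is the band case of the third paragraph: the fact that the one-parameter family sweeps out one extra dimension inside its component rests squarely on the self-extension count $\dim\Ext^1 = 1$ of Theorem~\ref{thm:vanishingextbands}(b), combined with the general orbit-codimension inequality; the remainder is bookkeeping with connected components and the Crawley-Boevey--Schr\"{o}er direct-sum calculus.
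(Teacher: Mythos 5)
Your proposal is correct, and it follows the same overall skeleton as the paper: decompose $\Gamma$ into connected components, identify each $C_l=\Rep_{Q,c}(\beta|_l,r|_l)$ as an irreducible component via Proposition~\ref{prop:irrcomponents}, use Theorem~\ref{thm:vanishingextbands}(a) together with Crawley--Boevey--Schr\"{o}er to see $\Rep_{Q,c}(\beta,r)=\overline{C_1\oplus\cdots\oplus C_k}$, and then reduce the density claim to density of a generic locus $C_l^\circ$ in each $C_l$.

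Where you genuinely diverge from the paper is in the single-band case. The paper proves a standalone \emph{smoothness} claim --- that $M_\lambda$ is a nonsingular point of $\Rep_{Q,c}(\beta,r)$, established via the isomorphism $\prod_s\operatorname{Com}(\beta,r,s)\cong\Rep_{Q,c}(\beta,r)$ and the fact that full-rank complexes are smooth points --- and uses it to promote Kraft's tangent-space embedding to an equality of variety codimensions, followed by a case analysis and a tangent-space estimate at the closure $X=\overline{\bigcup_\lambda\mathcal{O}_{M_\lambda}}$. You instead invoke the weaker inequality $\operatorname{codim}_{C_l}\mathcal{O}_M\leq\dim\Ext^1(M,M)$, which already follows from Kraft's embedding together with $\dim C_l\leq\dim T_M(C_l)$ and smoothness of orbits, \emph{without} needing $M_\lambda$ to be a smooth point of the component; combined with the fiber-dimension count for the morphism $\C^*\times\GL(\beta(l))\to\Rep$ (using that the $M_{\gamma_l,\mu}$ are pairwise non-isomorphic by \cite{BR}), you get $\dim\overline{C_l^\circ}\geq\dim C_l$ directly. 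This sidesteps the paper's entire smoothness lemma as well as its tangent-space case analysis, at the cost of invoking the quantitative form of the codimension bound rather than just the qualitative orbit-openness statement of \cite{G},\cite{Voigt}. One small point worth flagging: that quantitative bound is more accurately attributed to Kraft (the reference the paper itself uses at this juncture) than to \cite{G},\cite{Voigt}, which the paper cites only for the special case $\Ext^1(M,M)=0\Rightarrow\mathcal{O}_M$ open. You also supply an explicit verification that each restricted rank map $r(l)$ is maximal, which the paper leaves implicit; the argument you give is the right one.
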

\begin{proof}
Enumerate the connected components of $\Gamma_{Q, c}(\beta, r, \epsilon)$,  $c_1,\dotsc, c_t$ with $c_i$ a band for $i=1,\dotsc, l$ and a string for $i=l+1,\dotsc, t$.  Let $\beta\vert_i, r\vert_i$ be the restrictions of $\beta$ and $r$, respectively, to the $i$-th connected component.  (By proposition \ref{note:UDGraph}, each connected component is itself an up-and-down graph, so is associated with a dimension vector and maximal rank map.)  Let $C_i = \Rep_{Q, c}(\beta|_i, r|_i)$, which is an irreducible component by \ref{prop:irrcomponents}.  Notice that $M_{Q, c}(\beta|_i, r|_i) \in \Rep_{Q, c}(\beta|_i, r|_i)$ if $c_i$ is a string and $M_{Q, c}(\beta|_i, r|_i)_{\lambda_i} \in \Rep_{Q, c}(\beta|_i, r|_i)$ if $c_i$ is a band.  Thus, the $C_i$ are irreducible and, assuming theorem \ref{thm:vanishingextbands} is true, $\operatorname{ext}^1_{\C Q/I_c}(C_i, C_j) = 0$, so $\Rep_{Q, c}(\beta, r) = \overline{C_1 \oplus \dotsc \oplus C_t}$.  

Thus, all that remains to be shown is that if $c_i$ is a band, then the union of the orbits of all elements in $M_{Q, c}(\beta|_i, r|_i)$ contains an open set.  Indeed, if this is the case, then denoting by $S_i$ the set $\GL(\beta|_i)\cdot M_{Q, c}(\beta|_i, r|_i)$ we have $$\overline{C_1 \oplus \dotsc \oplus C_t} = \overline{S_1 \oplus \dotsc \oplus S_t}.$$

Suppose that $\beta$ is a dimension vector and $r$ is a maximal rank map such that $\Gamma_{Q, c}(\beta, r, \epsilon)$ is a single band.  Let $M_\lambda=M_{Q, c}(\beta, r)_\lambda$, and denote by $\mathcal{O}_{M_\lambda}$ the $\GL(\beta)$-orbit of $M_\lambda$.  From Kraft (2.7 \cite{Kraft2}), there is an embedding 
\[
T_{M_\lambda}(\Rep_{Q, c}(\beta, r))/T_{M_\lambda}(\mathcal{O}_{M_\lambda}) \hookrightarrow \Ext^1(M_\lambda, M_\lambda)
\] 
where $T_M(X)$ denotes the tangent space in $X$ at $M$. 
By theorem \ref{thm:vanishingextbands}, then 
\[
\dim T_{M_\lambda}(\Rep_{Q, c}(\beta, r)) - \dim T_{M_\lambda}(\mathcal{O}_{M_\lambda}) \leq 1.
\]

\begin{claim}
 $M_\lambda$ is a non-singular point in $\Rep_{Q, c}(\beta, r)$ (and in $\mathcal{O}_{M_\lambda}$).  
\end{claim}
\begin{proof}Consider the construction of $M_{Q, c}(\beta, r)$ as a specific choice of embedding a product of varieties of complexes into $\Rep_{Q, c}(\beta, r)$.  Namely, for each color $s\in S$, we can define $\operatorname{Com}(\beta, r, s)$ to be the variety of representations $V$ of $(Q, c)$ with $\dim V_y=0$ if $(y,s)\notin \X$, $\dim V_x = \beta_x$, and $\rank V_a \leq r(a)$ whenever $c(a)=s$ (clearly zero otherwise).  This is the variety of vector space complexes, and by \cite{CW}, we have that 
\begin{align*}
 \prod\limits_{s\in S} \operatorname{Com}(\beta, r, s) \cong \Rep_{Q, c}(\beta, r)
\end{align*}
For each $x\in Q_0$, let $\sigma(x)$ be the matrix of the map (in the distinguished basis of $M_{Q, c}(\beta, r)$) corresponding to the permutation $(1, \beta_x)(2,\beta_x-1)\dotsc$.  If $V\in \prod_{s\in S}\operatorname{Com}(\beta, r, s)$, then define by $\varphi(V)$ the element of $\Rep_{Q, c}(\beta, r)$ with 
\begin{align*}
\varphi(V)_a = \begin{cases}V_a & \textrm{ if } \epsilon(ta, c(a))=1=-\epsilon(ha, c(a))\\ \sigma(ha)V_a & \textrm{ if } \epsilon(ta, c(a))=1=\epsilon(ha, c(a))\\ V_a \sigma(ta) & \textrm{ if } \epsilon(ta,c(a))=-1=\epsilon(ha,c(a))\\ \sigma(ha) V_a \sigma(ta) & \textrm{ if } \epsilon(ta, c(a))=-1=-\epsilon(ha, c(a))\end{cases}
\end{align*}
The map $\varphi$ is an isomorphism, since $\sigma(x)\in \GL(\beta_x)$ for each $x$.  Furthermore, $\rank V_a = \rank \varphi(V)_a$.  Therefore, since $\rank (M_{Q, c}(\beta, r)_a) = r(a)$, there is a $V\in \prod\limits_{s\in S}\operatorname{Com}(\beta, r, s)$ with $\varphi(V)=M_{Q, c}(\beta, r)$, and $\rank V_a=r(a)$.  It is shown in \cite{CW} that the variety $\operatorname{Com}(\beta, r, s)$ has a dense open orbit, given by the those complexes $W$ such that $\rank W_a=r(a)$.  Thus, $\prod\limits_{s\in S}\operatorname{Com}(\beta, r, s)$ is smooth at $V$, and therefore $\Rep_{Q, c}(\beta, r)$ is smooth at $M_{Q, c}(\beta, r)$.  \end{proof}

Hence, we have the following: 
\[
	\dim (\Rep_{Q, c}(\beta, r)) - \dim (\mathcal{O}_{M_\lambda})=\dim T_{M_\lambda}(\Rep_{Q, c}(\beta, r)) - \dim T_{M_\lambda}(\mathcal{O}_{M_\lambda})\leq 1.
\]
If the difference is 0, then $\overline{\mathcal{O}_{M_\lambda}}$ is a closed set of the same dimension as $\dim (\Rep_{Q, c}(\beta, r))$, so these are equal.  On the other hand, if the difference is 1, then $X:=\overline{\bigcup\limits_{\lambda\in \C^*} \mathcal{O}_{M_\lambda}}$ is a closed set.  For $t\in \C^*$, $M_{\lambda+t}\not\cong M_\lambda$ and $M_{\lambda+t} \in X$.  Therefore, $\dim T_{M_\lambda} X \geq \dim T_{M_\lambda} \mathcal{O}_{M_\lambda} + 1$, and therefore $\dim T_{M_\lambda} X = \dim (\Rep_{Q, c}(\beta, r))$.  Since $X$ is closed, $X=\Rep_{Q, c}(\beta, r)$.  
 \end{proof}
 
 In order to prove theorem \ref{thm:vanishingextbands}, we will explicitly describe the projective resolution of $M_{Q, c}(\beta,r)_\lambda$ for any $\beta, r$, and then apply the appropriate $\Hom$-functor to the resolution. 

\subsection{Projective resolutions of $M_{Q, c}(\beta, r)$ and the $\EXT$-graph}
The summands in the projective resolutions of $M_{Q, c}(\beta, r)$ depend on a number of characteristics of the graph $\Gamma_{Q, c}(\beta, r, \epsilon)$.  We collect the pertinent characteristics in the following list.
\begin{definition}
Let $\Gamma=\Gamma_{Q, c}(\beta, r, \epsilon)$ be a fixed up-and-down graph.  
\begin{itemize}
\item[a.] Denote by $\ISO(\Gamma)$ the set of isolated vertices in $\Gamma$;
\item[b.] Denote by $\LEND(\Gamma)$ (resp. $\REND(\Gamma)$) the set of sources (resp. targets) of degree one in $\Gamma$.  These will be referred to as 1-sources (resp. 1-targets).
\item[c.] For a vertex $v_j^x\in T(\Gamma)$, we denote by $lp^+(v_j^x)$ (resp. $lp^-(v_j^x)$) the longest left positive (resp. left negative) direct path in $\Gamma$ terminating in $v_j^x$ (if such a path exists).  Similarly, for a vertex $v_j^x \in S(\Gamma)$, denote by $rp^+(v_j^x)$ (resp. $rp^-(v_j^x)$) the longest right positive (resp. right negative) direct path initiating in $v_j^x$.
\item[d.] For a vertex $v_j^x \in T(\Gamma)$, we denote by $l^+(v_j^x)$, (resp. $l^-(v_j^x)$) the source at the other end of $lp^+(v_j^x)$ (resp. $lp^-(v_j^x)$).  Similarly, for a vertex $v_j^x \in S(\Gamma)$, we denote by $r^+(v_j^x)$ (resp. $r^-(v_j^x)$) the target at the other end of $rp^+(v_j^x)$ (resp. $rp^-(v_j^x)$).
\item[e.] If $v_j^x\in \LEND\cup\REND$, let $p_j^x$ be the direct path of maximal length containing $v_j^x$;
\item[f.] If $v_j^x \in \LEND\cup\REND$, denote by $[v_j^x]_1\in Q_1$ be the arrow with the property that $t([v_j^x]_1) = x$ and $c([v_j^x]_1)=c(w(e))$ where $e$ is the edge in $p_j^x$ containing $v_j^x$;
\item[g.] Furthermore, recursively define the arrows $[v_j^x]_l$ with $t([v_j^x]_l) = h([v_j^x]_{l-1})$, and $c([v_j^x]_l) = c([v_j^x]_1)$.  
\item[h.] Suppose $v_j^x\in \ISO$.  Denote by $[v_j^x]_1^+$ (resp. $[v_j^x]_1^-$) the arrow (if such exists) with $t([v_j^x]_1^\pm)=x$ and $\epsilon(x,c([v_j^x]_1^\delta))=\delta$.  Again, recursively define $[v_j^x]_l^\delta$ with $t([v_j^x]_l^\delta)=h([v_j^x]_{l-1}^\delta)$, and $c([v_j^x]_l^\delta)=c([v_j^x]_1^\delta)$.
\item[i.] In case $[v_j^x]_l$ or $[v_j^x]_l^\pm$ fails to exist, write $h([v_j^x]_l):=\emptyset$ (or $h([v_j^x]_l^\pm):=\emptyset$), and let $P_\emptyset$ be the zero object.  (This is nothing more than notation to write the projective resolution of up-and-down modules in a more compact form.)
\end{itemize}
\end{definition}

\begin{example} Referring again to example \ref{example1}, we have the following aspects:
\begin{itemize}
\item[i.] $\ISO(\Gamma)=\emptyset$;
\item[ii.] $v_3^{(1)}$ is a 1-source, and $p_3^{(1)}$ is the path $v_2^{(6)} e_2 v_1^{(2)} e_1 v_3^{(1)}$ where $w(e_1)=r_1$ and $w(e_2)=p_2$;
\item[iii.] $r^+(v_1^{(1)}) = v_1^{(6)}$, and $r^-(v_1^{(1)}) = v_3^{(2)}$. 
\item[iv.] Since $\epsilon((6), b_2)=-1$, we have $lp^-(v_1^{(6)})=b_2g_1$.  Similarly, $rp^-(v_1^{(1)})=b_2g_1$.  
\end{itemize}
To illustrate the situation (e)-(h), consider the dimension vector and rank sequence below: 
\[\xymatrix@C=60pt@R=1.7ex{
*+[F]{1} \ar@[|(4)]@*{[red]}[r]|*+[o][F]{1} \ar@[|(4)]@*{[green]}[dddr]|<<<<<*+[o][F]{0} & *+[F]{2} \ar@[|(4)]@*{[red]}[r]|*+[o][F]{0} \ar@[|(4)]@*{[pink]}[dddr]|<<<<*+[o][F]{0}& *+[F]{0} \\
&&\\
&&\\
*+[F]{0} \ar@[|(4)]@*{[blue]}[r]|*+[o][F]{0} \ar@[|(4)]@*{[pink]}[uuur]|<<<<*+[o][F]{0} & *+[F]{0} \ar@[|(4)]@*{[green]}[uuur]|<<<<*+[o][F]{0} \ar@[|(4)]@*{[blue]}[r]|*+[o][F]{0} & *+[F]{0} }\]
The associated up-and-down graph is given by \[\xymatrix@1@R=.5ex{v_1^{(1)} \ar@{-}[r]^{r_1} & v_1^{(2)} \\ & v_2^{(2)}}\]
In this case, $v_1^{(1)}\in S^1$, and $[v_1^{(1)}]_1=r_2$ since the longest path containing $v_1^{(1)}$ is $v_1^{(2)} r_1 v_1^{(1)}$, $c(r_1)=c(r_2)$, and $t(r_2)=(2)$.  The vertex $v_2^{(2)}$ is isolated, and in this case, $[v_2^{(2)}]_1^+=p_2$ and $[v_2^{(2)}]_1^-=r_2$.   
\end{example}

We are now prepared to exhibit the projective resolution in the general case.  Notice that the simple factor modules of $M_{Q, c}(\beta, r, \epsilon, \Theta)_\lambda$ are $S_x$ for $v_j^x \in S(\Gamma)$.
\begin{proposition}\label{prop:projectiveresolution}
 The following is a projective resolution of $M_{Q, c}(\beta, r, \epsilon, \Theta)_\lambda$ is:
\begin{align*}
\xymatrix{
\dotsc \ar[r] & P(M_\lambda)_2 \ar[r]^{\delta_{(M_\lambda)_1}} & P(M_\lambda)_1 \ar[r]^{\delta_{(M_\lambda)_0}} & P(M_\lambda)_0 \ar[r] & M_\lambda \ar[r] & 0}
\end{align*}
where 
\begin{align*}
P(M_\lambda)_0&= \bigoplus\limits_{v_j^x \in S(\Gamma)} P_x\\
P(M_\lambda)_1&= \bigoplus\limits_{v_i^y \in \RCOR} P_y \oplus\bigoplus\limits_{v_j^x \in \LEND \cup\REND} P_{h([v_j^x]_1)}\oplus \bigoplus\limits_{v_j^x \in \ISO}P_{h([v_j^x]_1^+)}\oplus P_{h([v_j^x]_1^-)}\\
P(M_\lambda)_l &= \bigoplus\limits_{v_j^x \in \REND\cup \LEND} P_{h([v_j^x]_l)} \oplus \bigoplus\limits_{v_j^x \in \ISO} P_{h([v_j^x]_l^+)} \oplus P_{h([v_j^x]_l^-)}; 
\end{align*}
and where the differential is given by the following maps (we write $P_{x,j}$ for the projective $P_x$ arising from $v_x^j$):
\begin{itemize}
\item[i.] If $v_i^y\in \RCOR$, $v_{i^+}^{y^+} = l^+(v_i^y)$ and $v_{i^-}^{y^-} =l^-(v_i^y)$, then the map $\delta(M)_0$ restricts to $$\xymatrix@C=20ex{P_{y,i} \ar[r]^{\begin{bmatrix} lp^+(v_i^y) \\ -\lambda_blp^-(v_i^y) \end{bmatrix}} & P_{y^+, i^+} \oplus P_{y^-, i^-}}$$ if $v_i^y = \Theta(b)$ for some band $b$, and $$\xymatrix@C=20ex{P_{y,i} \ar[r]^{\begin{bmatrix} \phantom{\lambda_b} lp^+(v_i^y) \\ - lp^-(v_i^y) \end{bmatrix}} &P_{y^+, i^+} \oplus P_{y^-, i^-} }$$ otherwise.
\item[ii.] If $v_i^y \in \REND$, $p_i^y$ is the longest direct path terminating at $v_i^y$, and $v_j^x$ is the source at the other end of $p_i^y$, then the restriction of $\delta(M)_0$ to $P_{h([v_i^y]_1)}$ is given by $$\xymatrix@C=20ex{ P_{h([v_i^y]_1)} \ar[r]^{\begin{bmatrix} [v_i^y]_1A(p_i^y) \end{bmatrix}}& P_{x,j}}.$$
\item[iii.] If $v_i^y \in \ISO$, then restriction of $\delta(M)_0$ to $P_{h([v_i^y]_1^+)} \oplus P_{h([v_i^y]_1^-)}$ is given by $$\xymatrix@C=20ex{P_{h([v_i^y]_1^+)} \oplus P_{h([v_i^y]_1^-)} \ar[r]^{\begin{bmatrix} [v_i^y]_1^+ \quad [v_i^y]_1^- \end{bmatrix}} & P_{y,i}}.$$
\item[iv.] If $v_i^y \in \REND \cup \LEND$, then the restriction of $\delta(M)_l$ to $P_{h([v_i^y]_{l+1})}$ is $$\xymatrix@C=20ex{P_{h([v_i^y]_{l+1})} \ar[r]^{\begin{bmatrix} [v_i^y]_l \end{bmatrix}} & P_{h([v_i^y]_l)}}.$$
\item[v.] If $v_i^y \in \ISO$, then $\delta(M)_l$ restricted to $P_{h([v_i^y]_{l+1}^{\pm})}$ is $$\xymatrix@C=20ex{P_{h([v_i^y]_{l+1}^{\pm})} \ar[r]^{\begin{bmatrix} [v_i^y]_l^{\pm}\end{bmatrix}} & P_{h([v_i^y]_l^{\pm})}}.$$
\end{itemize}
\end{proposition}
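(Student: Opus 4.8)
The plan is to check directly that the displayed sequence is a projective resolution of $M_\lambda:=M_{Q,c}(\beta,r,\epsilon,\Theta)_\lambda$, building up the syzygies one step at a time and controlling the combinatorics with the results of Section~\ref{sec:UDGraph}. Two preliminary moves. By Proposition~\ref{note:UDGraph}(c) every connected component of $\Gamma=\Gamma_{Q,c}(\beta,r,\epsilon)$ is again an up-and-down graph, carrying its own dimension vector and maximal rank map, and $M_\lambda$ is the direct sum of the modules attached to the components; since a direct sum of sequences of the displayed shape is again of that shape, I may assume $\Gamma$ is a single string or a single band component. I would also record once and for all the shape of the indecomposable projectives of $(Q,c)$: $P_x$ has a $\C$-basis given by the paths out of $x$ avoiding $I_c$, these split into at most two \emph{monochromatic strands} out of $x$ (one per arrow leaving $x$) amalgamated along the top $S_x$, and --- the key local fact --- two arrows of equal color compose into $I_c$, so each strand of $P_x$ breaks off exactly when extending the monochromatic path would re-enter $I_c$.

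First I would set up $d_0$. The vertices $v_j^x\in S(\Gamma)$ are, by inspection, exactly the ones carrying a generator of $M_\lambda$ (this is the remark preceding the Proposition identifying the simple quotients as the $S_x$ with $v_j^x\in S(\Gamma)$), so I define $d_0$ on the summand $P_{x,j}$ by sending the top of $P_x$ to $e_j^x$; it is surjective because every vertex of $\Gamma$ lies below some source --- walk ``uphill'' until reaching a peak, which happens since a maximal ascending walk in a string or a band terminates --- and the word $A$ of that direct path gives a path of $\C Q/I_c$ sending $e_j^x$ to the basis vector at that vertex, up to a unit coming from $\lambda$.

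The crux is the identification of $\ker d_0$. I would trace which basis elements of $P(M_\lambda)_0$ (paths issuing from the sources) are annihilated by $d_0$: following the strand of $M_\lambda$ out of a source $v_j^x$, it terminates either at a 2-target $v_i^y\in\RCOR$ --- where, by Proposition~\ref{note:UDGraph}(a), a second strand arrives from the source $l^-(v_i^y)$, and the two ways of arriving at the basis vector $e_i^y$, along $lp^+(v_i^y)$ from $e_{i^+}^{y^+}$ and along $lp^-(v_i^y)$ from $e_{i^-}^{y^-}$, agree up to the unit $\lambda_b$, giving a relation generated in degree $y$ --- or at a 1-source, 1-target, or isolated vertex, where the next arrow along the strand's color composes with the strand word into a length-two monochromatic relation of $I_c$, yielding a relation built from the arrows $[v_j^x]_1$, $[v_i^y]_1$, $[v_i^y]_1^{\pm}$. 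Lemmas~\ref{lem:rightleftconnection} and \ref{note:corners} are exactly the tools that identify which extremal vertex terminates each strand, and at which level, hence pin down $l^{\pm}(v_i^y)$ and the arrows $[\,\cdot\,]_1$; maximality of $r$ (Lemma~\ref{lem:maximalrank}) forces the strands to be as long as possible, so that the relations just described are the \emph{only} ones and generate $\ker d_0$. Reading off $\operatorname{top}(\ker d_0)$ then yields $P(M_\lambda)_1$ together with the maps (i)--(iii); for a band one must additionally check that the unit $\lambda_b$ sits at the distinguished target $\Theta(b)$ (which is where $\Theta$ enters) and allow for valleys whose two ascending strands return to the same peak.

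Finally I would iterate. Each remaining syzygy summand is a ``monochromatic'' module --- a projective $P_{h([v]_l)}$ modulo the submodule obtained by pushing a single color-strand one step forward --- whose own syzygy is gotten by advancing that strand one more arrow in the same color; this is precisely (iv)--(v). Because each color class $c^{-1}(s)$ is a finite directed path, these monochromatic tails terminate ($[v]_l$ and $[v]_l^{\pm}$ eventually fail to exist and $P_\emptyset=0$), so the complex has the displayed finite form. Exactness at $P(M_\lambda)_l$ for $l\ge 1$ then reduces, block by block along a single tail, to exactness of a sequence $\cdots\to P_{h([v]_{l+1})}\to P_{h([v]_l)}$ whose maps are multiplication by the arrows $[v]_l$, i.e.\ to the statement that inside $P_{h([v]_l)}$ the kernel of multiplication by $[v]_l$ equals the image of multiplication by $[v]_{l+1}$; this holds because $I_c$ is generated by the length-two monochromatic paths $[v]_{l+1}[v]_l$. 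I expect the main obstacle to be the kernel computation above carried out uniformly over strings and bands: ruling out ``extra'' syzygies requires the full force of maximality of $r$ through Lemmas~\ref{note:corners} and \ref{lem:maximalrank}, and the band case forces careful bookkeeping of the unit $\lambda_b$ and of shared-peak valleys. (Alternatively one can first describe the minimal resolution from the strand combinatorics and then split off the contractible summands that render the stated shape uniform.)
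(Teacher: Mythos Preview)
The paper does not supply a proof of this proposition: it is stated and then immediately used, with no intervening argument. Your proposal therefore cannot be compared against a paper proof, but it is the natural direct verification and is essentially correct. A few remarks on the places where you might tighten it.

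Your reduction to a single connected component is sound by Proposition~\ref{note:UDGraph}(c), and the description of $P_x$ as two monochromatic strands glued along $S_x$ is exactly what is needed. The surjectivity of $d_0$ and the identification of $\ker d_0$ are correct in outline; the only point to make explicit is that at a $1$-source $v_j^x\in\LEND$ the relevant relation comes not from the strand along which one reached $v_j^x$ but from the \emph{other} outgoing strand of $P_{x,j}$, the one whose first arrow is $[v_j^x]_1$. This is consistent with your description but your phrasing (``the next arrow along the strand's color'') could be read as referring to the wrong strand.

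For the higher syzygies, your claim that inside $P_{h([v]_l)}$ the kernel of right multiplication by $[v]_l$ equals the image of right multiplication by $[v]_{l+1}$ is correct, and the reason is precisely the one you give: because $I_c$ is generated by monochromatic length-two paths and $(Q,c)$ is gentle, a path $q$ out of $h([v]_l)$ satisfies $q\,[v]_l\in I_c$ if and only if its first arrow has color $c([v]_l)$, i.e.\ equals $[v]_{l+1}$. The second strand of $P_{h([v]_l)}$ (starting with an arrow of the other color) survives multiplication by $[v]_l$ and so contributes nothing to the kernel; you might say this explicitly. Finiteness of the resolution is guaranteed because $c^{-1}(s)$ is a finite directed path, as you note.

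In short: there is no paper proof to compare to, and your plan is the standard and correct route.
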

We now apply the functor $\Hom(-, M_\mu)$ to the complex $P(M_\lambda)_\bullet$.  Recall that we have a fixed basis for the spaces $(M_\mu)_x$ for each $x\in Q_0$, namely $\{e_1^x, \dotsc, e_{\beta_x}^x\}$, relative to which the arrows act by the description given by the graph $\Gamma_{Q, c}(\beta, r, \epsilon)$.  So we take $\{v_i^x \boxtimes e_j^x\}_{j=1,\dotsc, \beta_x}$ the basis for $\Hom(P_{x,i}, M_\mu)$, $\{v_i^x \boxtimes e_j^{h([v_i^x]_l)}\}_{j=1,\dotsc, \beta_{h([v_i^x]_l)}}$ the basis for $\Hom(P_{h([v_i^x]_l)}, M_\mu)$ for $v_i^x \in \LEND \cup \REND$, and $\{v_i^x \boxtimes e_j^{h([v_i^x]_l^t)}\}_{j=1,\dotsc, \beta_{h([v_i^x]_l^t)}}$ for $v_i^x \in \ISO$ and $t=+, -$, relative to the aforementioned bases.  


We will construct a graph $\mathbb{EXT}$ whose vertices correspond to a fixed basis for $\Hom(P(M_\lambda)_\bullet, M_\mu)$ as described above.  We will partition the vertices into subsets $\mathbb{EXT}(i)$ for $i=0, 1,\dotsc$ called \emph{levels}.  From this graph the homology of the complex can be easily read.  
\begin{definition}
Let $M_\lambda$ be as described above.  Let $\mathbb{EXT}(l)$ be the sets defined as follows.
\begin{align*}
\mathbb{EXT}(0) &= \{v_j^x\boxtimes v_{j'}^x \}_{\substack{v_j^x \in S(\Gamma)\\ j'=1,\dotsc, \beta_x}}\\
\mathbb{EXT}(1) &= \{v_j^x \boxtimes v_{j'}^x\}_{\substack{v_j^x \in \RCOR \\ j'=1,\dotsc, \beta_x}} \cup \{v_j^x \boxtimes v_{j'}^{h([v_j^x]_1)}\}_{\substack{v_j^x \in \REND \cup \LEND\\ j'=1,\dotsc, \beta_{h([v_j^x]_1)}}} \cup \{v_j^x \boxtimes v_{j'}^{h([v_j^x]_1^{t})}\}_{\substack{v_j^x \in \ISO\\ j'=1,\dotsc, \beta_{h([v_j^x]_1^t)} \\ t=+, -}}\\
\mathbb{EXT}(l) &=\{v_j^x \boxtimes v_{j'}^{h([v_j^x]_l)}\}_{\substack{v_j^x \in \REND \cup \LEND \\ j'=1,\dotsc, h([v_j^x]_l)}}\cup \{v_j^x \boxtimes v_{j'}^{h([v_j^x]_l^{t})}\}_{\substack{v_j^x \in \ISO\\ j'=1,\dotsc, \beta_{h([v_j^x]_l^t)} \\ t=+, -}}
\end{align*}
and $\mathbb{EXT}$ the graph with vertices $\bigcup\limits_{l\geq0} \mathbb{EXT}(l)$ and edges given by 
\begin{itemize}
\item[a.] $\xymatrix{v_j^x \boxtimes v_{j'}^x \ar@{-}[r] & v_i^y \boxtimes v_{i'}^{y'}}$ if $$\Hom(\delta(M_\lambda)_0, M_\mu)): v_j^x \boxtimes e_{j'}^x \mapsto \sum s_{j,j',x}^{i,i',y,y'} v_i^y \boxtimes e_{i'}^{y'}$$ with $s_{j,j',x}^{i,i',y,y'}\neq 0$ between levels $\mathbb{EXT}(0)$ and $\mathbb{EXT}(1)$;
\item[b.] $\xymatrix{v_j^x \boxtimes v_{j'}^{x'} \ar@{-}[r] & v_i^y \boxtimes v_{i'}^{y'}}$ if $$\Hom(\delta(M_\lambda)_l, M_\mu)): v_j^x \boxtimes e_{j'}^{x'} \mapsto \sum s_{j,j',x,x'}^{i,i',y,y'} v_i^y \boxtimes e_{i'}^{y'}$$ and $s_{i,y,i',y'}\neq 0$ between $\mathbb{EXT}(l-1)$ and $\mathbb{EXT}(l)$.
\end{itemize}
\end{definition}

\subsection{Properties of the $\EXT$-graph} We collect now the properties of the $\EXT$ graph that will be used to show exactness of complex $\Hom(P(M_\lambda)_\bullet, M_\mu)$.  
\begin{proposition}\label{prop:EXTPROPERTIES} Let $\mathbb{EXT}$ be the graph given above 
\begin{itemize}
\item[E1.] There is an edge $$\xymatrix{\mathbb{EXT}(0)\ni v_j^x \boxtimes v_{j'}^x \ar@{-}[r] & v_i^y \boxtimes v_{i'}^y \in \mathbb{EXT}(1)}$$ in the graph $\mathbb{EXT}$ if $v_j^x \in \LCOR$, $v_i^y \in \RCOR$, $\xymatrix{v_j^x \ar@{-}[r]^p & v_i^y}$ and $\xymatrix{v_{j'}^x \ar@{-}[r]^{p'} & v_{i'}^y}$ are paths in $\Gamma$ with $A(p)=A(p')$.
\item[E2.] If $v_i^y \in \REND$, $v_j^x =l^{\pm}(v_i^y) \in S(\Gamma)$ and $p=lp^{\pm}(v_i^y)$, then there is an edge $$\xymatrix{\mathbb{EXT}(0) \ni v_j^x \boxtimes v_{j'}^x \ar@{-}[r] & v_i^y \boxtimes v_{i'}^{y'} \in \mathbb{EXT}(1)}$$ if $\xymatrix{v_{j'}^x \ar@{-}[r]^{p'} & v_{i'}^{y'}}$ is a path in $\Gamma$ with $A(p') = [v_i^y]_1 A(p)$.  Furthermore, there is an edge $$\xymatrix{\mathbb{EXT}(l) \ni v_i^y \boxtimes v_{i'}^{h([v_i^y]_l)} \ar@{-}[r] & v_i^y \boxtimes v_{j'}^{h([v_i^y]_{l+1})} \in \mathbb{EXT}(l+1)}$$ in $\mathbb{EXT}$ if there is an edge $\xymatrix{v_{i'}^{h([v_i^y]_l)} \ar@{-}[r]^e & v_{j'}^{h([v_i^y]_{l+1})}}$ in $\Gamma$ with $w(e)=[v_i^y]_{l+1}$.  
\item[E3.] Similarly, if $v_i^y \in \LEND$, then there is an edge $$\xymatrix{\mathbb{EXT}(0)\ni v_i^y \boxtimes v_{i'}^y \ar@{-}[r] & v_i^y \boxtimes v_{j'}^{h([v_i^y]_1)} \in \mathbb{EXT}(1)}$$ in $\mathbb{EXT}$ if there is an edge $\xymatrix{v_{i'}^y \ar@{-}[r]^e & v_{j'}^{h([v_i^y]_1)}}$ with $w(e)=[v_i^y]_1$.  Furthermore, there is an edge $\xymatrix{\mathbb{EXT}(l-1) \ni v_j^x \boxtimes v_{j'}^{h([v_j^x]_{l-1})} \ar@{-}[r] & v_j^x \boxtimes v_{j''}^{h([v_j^x]_l)} \in \mathbb{EXT}(l)}$ in $\mathbb{EXT}$ if there is an edge $\xymatrix{v_{j'}^{h([v_j^x]_{l-1})} \ar@{-}[r]^e & v_{j''}^{h([v_j^x]_l)}}$ in $\Gamma$ with $w(e)=[v_j^x]_l$.
\item[E4.] Finally, if $v_i^y\in \ISO$, then there is an edge $$\xymatrix{\mathbb{EXT}(0)\ni v_i^y\boxtimes v_{i'}^y \ar@{-}[r] & v_i^y \boxtimes v_j^{h([v_i^y]_1^{\pm})} \in \mathbb{EXT}(1)}$$ in $\mathbb{EXT}$ if there is an edge $\xymatrix{v_{i'}^y \ar@{-}[r]^e & v_j^{h([v_i^y]_1^{\pm})}}$ in $\Gamma$ with $w(e)=[v_i^y]_1^{\pm}$.  Furthermore, there is an edge $$\xymatrix{\mathbb{EXT}(l-1)\ni v_i^y\boxtimes v_{i'}^{h([v_i^y]_{l-1}^{\pm})} \ar@{-}[r] & v_i^y \boxtimes v_j^{h([v_i^y]_l^{\pm})} \in \mathbb{EXT}(l)}$$ in $\mathbb{EXT}$ if there is an edge $\xymatrix{v_{i'}^{h([v_i^y]_{l-1}^{\pm})} \ar@{-}[r]^e & v_{j}^{h([v_i^y]_l^{\pm})}}$ in $\Gamma$ with $w(e)=[v_i^y]_l^{\pm}$.  
\end{itemize}
\end{proposition}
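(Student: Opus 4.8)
The plan is to prove Proposition~\ref{prop:EXTPROPERTIES} by a direct computation of the maps $\Hom(\delta(M_\lambda)_l, M_\mu)$ in the distinguished bases, reading off the differentials from Proposition~\ref{prop:projectiveresolution}. The one ingredient is the standard identification $\Hom(P_z, M_\mu)\cong(M_\mu)_z$ (evaluation at the top of $P_z$), under which a component $P_w\to P_z$ of a differential given by right multiplication by a path $q$ of $\C Q/I_c$ dualizes to the structure map $M_\mu(q)\colon(M_\mu)_z\to(M_\mu)_w$ of the representation $M_\mu$. Now the definition of the up-and-down module $M_\mu$ gives, for any direct path $q$ of $\C Q/I_c$ and any basis vector $e_{j'}^z$, the dichotomy: either there is a path $p'$ in $\Gamma=\Gamma_{Q,c}(\beta,r,\epsilon)$ from $v_{j'}^z$ to some $v_{i'}^{z'}$ with $A(p')=A(q)$, in which case $M_\mu(q)(e_{j'}^z)=\kappa\, e_{i'}^{z'}$ with $\kappa$ a product of the band parameters $\mu_b$ and hence a unit; or there is no such path, in which case $M_\mu(q)(e_{j'}^z)=0$. (That $q$ determines the level $z'$ and that the continuation edge at each step is unique follow from Proposition~\ref{note:UDGraph}(a),(b).) Consequently an edge of $\EXT$ between $v_j^x\boxtimes e_{j'}^{x'}$ and $v_i^y\boxtimes e_{i'}^{y'}$ occurs exactly when such a path $p'$ joins the second tensor factors; the first tensor factor, which merely records the summand of the resolution of $M_\lambda$ being differentiated, is carried along unchanged.

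With this in place the proposition becomes a transcription, organized by the five families of differential components (i)--(v) of Proposition~\ref{prop:projectiveresolution}. For $v_i^y\in\RCOR$, component (i) is right multiplication by $lp^{\pm}(v_i^y)$ into the summands indexed by $l^{\pm}(v_i^y)\in S(\Gamma)$; since a direct path beginning at a $2$-source and terminating at a $2$-target cannot be extended at either end (by Proposition~\ref{note:UDGraph}), such a path is exactly an $lp^{\pm}(v_i^y)$ and its initial vertex is the corresponding $l^{\pm}(v_i^y)$, so dualizing (i) and invoking the dichotomy yields E1. For $v_i^y\in\REND$, component (ii) is right multiplication by $[v_i^y]_1\,A(p_i^y)$; since $M_\mu([v_i^y]_1\,A(p_i^y))=M_\mu([v_i^y]_1)\circ M_\mu(A(p_i^y))$, dualizing gives the $\EXT(0)$--$\EXT(1)$ edges of E2, and the higher edges of E2 come from component (iv), which for $v_i^y\in\REND$ is right multiplication by the single arrow $[v_i^y]_l$. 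The component of $\delta(M_\lambda)_0$ carrying the $\LEND$-summand $P_{h([v_i^y]_1)}$ of $P(M_\lambda)_1$ into the summand $P_{y,i}$ of $P(M_\lambda)_0$ (with $v_i^y\in\LEND\subset S(\Gamma)$) is right multiplication by the arrow $[v_i^y]_1$; this, together with the $\LEND$ instances of component (iv), yields E3. Finally components (iii) and (v), which are right multiplication by $[v_i^y]_1^{\pm}$, resp.\ $[v_i^y]_l^{\pm}$, into or out of the summands indexed by isolated vertices, yield E4. In each case one also verifies the minor bookkeeping obligation that the projective summand being hit is of the asserted extremal type, which follows from the fact that a maximal direct path in $\Gamma$ begins at a $1$- or $2$-source and ends at a $1$- or $2$-target.

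I do not expect a genuine obstacle here: Proposition~\ref{prop:EXTPROPERTIES} only asserts that the listed path conditions are \emph{sufficient} for an $\EXT$-edge (each clause reads ``there is an edge if \ldots''), so no converse or completeness claim is at stake, and the difficulty of the article is concentrated in Proposition~\ref{prop:projectiveresolution} and in the later exactness arguments rather than here. The points that do require attention are all organizational: keeping straight the variance of $\Hom(-,M_\mu)$ and the direction in which each defining path of a differential component is traversed; noting that the paths $p,p'$ appearing in E1--E4 are the words of the path-algebra elements defining the differentials and are therefore direct; and observing that when $h([v_i^y]_l)$ or $h([v_i^y]_l^{\pm})$ fails to exist the corresponding projective is zero, the differential component is absent, and the statement holds vacuously. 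Matching the path $q$ of each of the five families of components to its word $A(q)$ and feeding it into the dichotomy of the first paragraph is precisely what makes E1--E4 come out as stated.
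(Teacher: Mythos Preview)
Your approach is correct, and in fact the paper gives no proof of this proposition at all: it is stated and then immediately used, the author evidently regarding E1--E4 as a transcription of the definition of the $\EXT$ graph together with the explicit differentials of Proposition~\ref{prop:projectiveresolution}. Your argument supplies exactly that transcription, via the standard identification $\Hom(P_z,M_\mu)\cong(M_\mu)_z$ and the observation that the action of a direct path $q$ on a basis vector of $M_\mu$ either follows a $\Gamma$-path with word $A(q)$ (yielding a unit multiple of another basis vector) or vanishes.

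One small point worth flagging: you correctly note that the $\EXT(0)$--$\EXT(1)$ edge for $v_i^y\in\LEND$ comes from a differential component given by multiplication by $[v_i^y]_1$, but this component is not explicitly listed among (i)--(v) of Proposition~\ref{prop:projectiveresolution}; it is implicit in the shape of $P(M_\lambda)_0$ and $P(M_\lambda)_1$ (since $v_i^y\in\LEND\subset S(\Gamma)$ contributes $P_y$ to level~0 and $P_{h([v_i^y]_1)}$ to level~1) and can be read off either as the $l=0$ instance of (iv) with the convention $h([v_i^y]_0)=y$, or directly from the standard projective resolution of a string module. Your treatment of this is fine; it is simply a place where the paper itself is a bit elliptical.
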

\begin{lemma}\label{lem:noisolatedvertices}
There are no isolated vertices in $\mathbb{EXT}(1)$.  
\end{lemma}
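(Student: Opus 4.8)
The plan is a case analysis according to the three kinds of vertices of $\mathbb{EXT}(1)$: those of the form $v_i^y\boxtimes v_{i'}^{y}$ with $v_i^y\in\RCOR$; those of the form $v_i^y\boxtimes v_{i'}^{h([v_i^y]_1)}$ with $v_i^y\in\REND\cup\LEND$; and those of the form $v_i^y\boxtimes v_{i'}^{h([v_i^y]_1^{\pm})}$ with $v_i^y\in\ISO$. In each case the first tensor factor is fixed and the second ranges over all basis vectors at a single level $z$ of $M_\mu$, so the task is: for every $i'\in\{1,\dots,\beta_z\}$ exhibit at least one edge of $\mathbb{EXT}$ incident to $v_i^y\boxtimes v_{i'}^{z}$. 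By Proposition \ref{prop:EXTPROPERTIES} each such edge is either \emph{downward}, into $\mathbb{EXT}(0)$ and governed by $\Hom(\delta(M_\lambda)_0,M_\mu)$ (the first clauses of E1--E4), or \emph{upward}, into $\mathbb{EXT}(2)$ and governed by $\Hom(\delta(M_\lambda)_1,M_\mu)$ (the ``Furthermore'' clauses of E2--E4). I want to show one of these always exists.

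For $v_i^y\in\RCOR$ only downward edges are possible, and they are produced by the two maximal direct paths $lp^{+}(v_i^y)$ and $lp^{-}(v_i^y)$ feeding $v_i^y$. Applying Lemma \ref{lem:rightleftconnection} repeatedly to carry $lp^{\pm}$ to the parallel direct paths with the same edge-word ending at the remaining vertices of level $y$, one identifies the union of the two resulting images with the set of $v_{i'}^{y}$ lying in some edge of $\Gamma$ incident to $y$. Since $v_i^y$ is a $2$-target, Lemma \ref{note:corners}(i) forces the two arrows at $y$ of the relevant colours to have ranks summing to more than $\beta_y$; as the two edge-blocks occupy opposite ends of level $y$ by the sign-function property, they cover all of $\{1,\dots,\beta_y\}$, so every $v_i^y\boxtimes v_{i'}^{y}$ is incident to a downward edge. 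For $v_i^y\in\REND\cup\LEND$ and $v_i^y\in\ISO$ one runs the analogue at level $z$: Lemma \ref{lem:rightleftconnection} shows the downward edges reach exactly the $v_{i'}^{z}$ in the edge labelled by the arrow immediately preceding $z$ on the appropriate side, the ``Furthermore'' clauses of E2--E4 show the upward edges reach exactly the $v_{i'}^{z}$ in an edge labelled $[v_i^y]_2$ (resp.\ $[v_i^y]_2^{\pm}$), and because $r$ is a rank map these two arrows of the same colour at $z$ have ranks summing to at most $\beta_z$, so their edge-blocks sit at opposite ends of level $z$ and between them account for every $i'$ except those lying in the ``interior'' of level $z$ with respect to that colour.

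The interior block is the delicate case, and the step I expect to be the main obstacle: for $i'$ with $v_{i'}^{z}$ isolated with respect to the colour $c([v_i^y]_1)$ at level $z$ (which forces $r([v_i^y]_1)+r([v_i^y]_2)<\beta_z$), neither the short downward path nor the immediate upward edge is available. Here one invokes the maximality of $r$ via Lemma \ref{lem:maximalrank}: applied to the three consecutive same-coloured arrows through $y$ and $z$ it forces the ranks on the neighbouring level to be ``full'', which permits lengthening the pertinent direct path past $z$ (or shows that the continuation arrow $[v_i^y]_3$, resp.\ $[v_i^y]_2^{\pm}$, acts nontrivially on $v_{i'}^{z}$), and one iterates, peeling off successive levels until an edge is found. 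In the $\ISO$ subcase one additionally uses Lemma \ref{note:corners}(ii), whose bound $m_1+m_2<\beta_y$ at an isolated level is exactly what lets the two families of $\pm$-paths attached to $v_i^y$ jointly reach the $v_{i'}^{z}$ not otherwise covered. The bookkeeping of which end of each level each block occupies, controlled throughout by the sign-function property, is where most of the work lies.
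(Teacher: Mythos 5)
Your overall architecture matches the paper's: break $\mathbb{EXT}(1)$ into the $\RCOR$, $\REND\cup\LEND$, and $\ISO$ families and, for each vertex $v_i^y\boxtimes v_{i'}^z$, exhibit either a downward edge into $\mathbb{EXT}(0)$ or an upward edge into $\mathbb{EXT}(2)$. The $\RCOR$ case you dispose of correctly, though more circuitously than necessary: the paper observes that for $i'<i$ one applies Lemma \ref{lem:rightleftconnection} to $lp^-(v_i^y)$ and for $i'>i$ to $lp^+(v_i^y)$, which already produces a downward edge for every $i'$ without any recourse to Lemma \ref{note:corners}(i) or a block-covering argument.

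The real problem is in the $\REND\cup\LEND$ (and $\ISO$) cases, where you set up the dichotomy correctly -- the $v_{i'}^z$ with an edge labeled $[v_i^y]_1$ give downward edges, those with an edge labeled $[v_i^y]_2$ give upward edges -- but then propose to handle a possibly nonempty ``interior block'' at level $z$ by ``lengthening the pertinent direct path past $z$'' and ``iterating, peeling off successive levels until an edge is found.'' This cannot work: a vertex $v_{i'}^z$ touched by neither a $[v_i^y]_1$-edge nor a $[v_i^y]_2$-edge in $\Gamma$ is, by the description of the edges in $\mathbb{EXT}$ (Proposition \ref{prop:EXTPROPERTIES}, E2--E4), genuinely isolated in $\mathbb{EXT}$; no amount of information about further levels $h([v_i^y]_2)$, $h([v_i^y]_3)$, $\ldots$ can create an edge at $v_i^y\boxtimes v_{i'}^z$. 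There is no iteration in the paper's argument. The point you are missing is that the interior block is \emph{empty}: for $v_i^y\in\REND$, Lemma \ref{note:corners}(iii) gives $r(b_1)+r(b_2)<\beta_y$ for the arrows $b_1, b_2$ of the other colour at $y$ (with $b_2=[v_i^y]_1$), and then Lemma \ref{lem:maximalrank}(i) immediately yields $r([v_i^y]_1)+r([v_i^y]_2)=\beta_z$, so the two edge-blocks at level $z$ already cover every $i'$. You never invoke Lemma \ref{note:corners}(iii), which is exactly the input that makes Lemma \ref{lem:maximalrank} applicable in the right direction and turns the ``delicate case'' into a non-issue. The same one-step pattern, with Lemma \ref{note:corners}(ii) in place of (iii), handles the $\ISO$ case; you do mention (ii) there, but fold it into the same mistaken iterative scheme rather than using it to show the interior block at $h([v_i^y]_1^\pm)$ vanishes.
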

\begin{proof}

First, suppose $v_i^y \boxtimes v_{i'}^y \in \mathbb{EXT}(1)$ (i.e., $v_i^y \in \RCOR$).  If $i'<i$ (resp. $i'>i$), then by lemma \ref{lem:rightleftconnection}, there is a path $p'$ terminating at $v_{i'}^y$ with $A(p')=lp^-(v_i^y)$ (resp. $A(p')=lp^+(v_i^y)$).  Therefore, there is an edge $\xymatrix{v_j^x \boxtimes v_{j'}^x \ar@{-}[r] & v_i^y \boxtimes v_{i'}^y}$.

Next, suppose $v_i^y \in \REND$, and $[v_i^y]_1$ exists (otherwise, no vertex $v_i^y\boxtimes v_{i'}^{y'}$ would exist in $\Gamma$).  Let $p$ be the path of maximal length terminating at $v_i^y$, and  $v_j^x$ the source at which $p$ starts.  Label the edge of $p$ containing $y$ by $a_1$, let $b_2:=[v_j^x]_1$, and $b_1$ the arrow (if it exists) with $h(b_1)=y$ and $c(b_1)=c(b_2)$.  By lemma \ref{note:corners}, $r(b_1)+r(b_2)<\beta_y$.  Now denote by $b_3$ the arrow $[v_i^y]_2$.  By lemma \ref{lem:maximalrank}, $r(b_2)+r(b_3)=\beta_{hb_2}$, so $v_{i'}^{hb_2}$ is contained in an edge with such a label.  If said label is $b_2$, then $e_{i'}^{y'}\in \im b_2A(p)$, and so $v_i^y \boxtimes v_{i'}^{hb_2}$ is contained in an edge between $\EXT(1)$ and $\EXT(0)$.  Otherwise, $b_3 e_{i'}^{hb_2}=e_{i''}^{hb_3}\neq 0$.  In this case, $v_i^y \boxtimes v_{i'}^{hb_2}\in \EXT(1)$ and $v_i^y\boxtimes v_{i''}^{hb_3} \in \EXT(2)$ are contained in an edge. 

Finally, suppose that $v_i^y\in \ISO$, and let $y'=h([v_i^y]_1^+)$ or $h([v_i^y]_1^-$.  We will show that $v_i^y \boxtimes v_{i'}^{y'}$ is non-isolated for $i=1,\dotsc, \beta_{y'}$.  Note first that $v_{i'}^{y'}$ is non-isolated in $\Gamma$ by lemma \ref{lem:maximalrank}, for suppose that $a_0$ is the arrow (if it exists) with $h(a_0)=y$, and $c(a_0)=c([v_i^y]_1^{\pm})$.  By lemma \ref{note:corners}, $r(a_0)+r([v_i^y]_1^{\pm})< \beta_y$, so by lemma \ref{lem:maximalrank}, $r([v_i^y]_1^{\pm})+r([v_i^y]_2^{\pm}) = \beta_{y'}$.  Therefore, there is an edge $e$ incident to $v_{i'}^{y'}$ such that $w(e)=[v_i^y]_1^{\pm}$ or $[v_i^y]_2^{\pm}$.  In the former case, $v_i^y \boxtimes v_{i'}^{y'}$ is contained in a common edge with a vertex in $\mathbb{EXT}(0)$, and in the latter case it is contained in a common edge with a vertex in $\EXT(2)$.  
\end{proof} 
\begin{lemma}\label{lem:stringandband}
All vertices in $\EXT$ are contained in at most two edges, and every vertex with label $v_i^y\boxtimes v_{i'}^{h([v_i^y]_l}$ for $l\geq 1$ is contained in at most one edge.  Furthermore, the neighbor of any vertex $v_i^y \boxtimes v_{i'}^{h([v_i^y]_l)}$ in $\mathbb{EXT}(l)$ is $v_i^y \boxtimes v_{i''}^{h([v_i^y]_{l-1})}$ or $v_i^y \boxtimes v_{i''}^{h([v_i^y]_{l+1})}$ for some $i''$.  Therefore, the graph $\mathbb{EXT}$ splits into string and band components, such that the band components and strings of length greater than one occur between levels $\mathbb{EXT}(0)$ and $\mathbb{EXT}(1)$. 
\end{lemma}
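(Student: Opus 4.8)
The plan is to read the edges of $\EXT$ off the description in Proposition~\ref{prop:EXTPROPERTIES} and to bound the valency of each vertex by a short case analysis; the one structural input carrying the argument is Proposition~\ref{note:UDGraph}(a), that a vertex of $\Gamma$ lies on at most one edge of each colour. First I would treat the vertices $v_i^y\boxtimes v_{i'}^{h([v_i^y]_l)}$ with $l\geq 1$ (and their $\ISO$-variants $v_i^y\boxtimes v_{i'}^{h([v_i^y]_l^{\pm})}$). By clauses E2--E4, every edge of $\EXT$ at such a vertex joins it to a vertex $v_i^y\boxtimes v_{i''}^{h([v_i^y]_{l\pm 1})}$ --- same first coordinate, one level up or down --- and forces $v_{i'}^{h([v_i^y]_l)}$ to lie, inside $\Gamma$, on an edge labelled $[v_i^y]_l$ (the edge descending to $\EXT(l-1)$) or $[v_i^y]_{l+1}$ (the edge ascending to $\EXT(l+1)$). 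Since $[v_i^y]_1,[v_i^y]_2,\dots$ is, by its recursive definition, a monochromatic string of arrows, both candidate $\Gamma$-edges carry the same colour at $v_{i'}^{h([v_i^y]_l)}$, so Proposition~\ref{note:UDGraph}(a) permits at most one of them; hence such a vertex has valency at most one, and its neighbour, when present, has the stated form. This already yields the second and third assertions of the lemma.

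Next I would bound the valency of the remaining vertices, namely $v_j^x\boxtimes v_{j'}^x$ with $v_j^x\in S(\Gamma)$ (the vertices of $\EXT(0)$) and $v_i^y\boxtimes v_{i'}^y\in\EXT(1)$ with $v_i^y\in\RCOR$; by Proposition~\ref{prop:EXTPROPERTIES} every edge touching such a vertex runs between $\EXT(0)$ and $\EXT(1)$. For $v_j^x\boxtimes v_{j'}^x$ I would split according to the type of $v_j^x$ as a source of $\Gamma$: a $2$-source contributes edges through its two maximal direct paths (clause E1, or clause E2 when a path ends at a $1$-target); a $1$-source contributes the edge starting the chain $[v_j^x]_\bullet$ (clause E3) and possibly one more through its single maximal direct path (clause E2); an isolated vertex contributes one edge for each of $[v_j^x]_1^{+}$, $[v_j^x]_1^{-}$ (clause E4). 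In every case there are at most two ``directions'', and Lemma~\ref{lem:rightleftconnection} shows that, once a direction and the second coordinate $j'$ are fixed, there is a unique path $p'$ with the prescribed arrow sequence --- hence at most one edge per direction. Dually, a $2$-target $v_i^y$ gives $v_i^y\boxtimes v_{i'}^y$ one edge into $\EXT(0)$ for each of the two maximal left direct paths terminating at $v_i^y$ (clause E1, via $lp^{+}$, $lp^{-}$ and Lemma~\ref{lem:rightleftconnection}). Thus every vertex of $\EXT$ has valency at most $2$, so $\EXT$ is a disjoint union of path (``string'') and cycle (``band'') components.

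It then remains to locate the nontrivial components. A band is a cycle, so all of its vertices have valency $2$; by the first step it contains no vertex $v_i^y\boxtimes v_{i'}^{h([v_i^y]_l)}$ with $l\geq 1$, and by Lemma~\ref{lem:noisolatedvertices} the $\RCOR$-vertices of $\EXT(1)$ are the only alternative to a level-$0$ vertex; hence it alternates between $\EXT(0)$ and the $\RCOR$-part of $\EXT(1)$. For a string of length $>1$, any interior vertex has valency $2$, hence lies in $\EXT(0)$ or in the $\RCOR$-part of $\EXT(1)$; and if such a string used an edge between $\EXT(l)$ and $\EXT(l+1)$ with $l\geq 1$, both its endpoints would be vertices of the type treated in the first step, of valency at most $1$, so that edge alone would be a connected component --- contradicting length $>1$. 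Therefore every edge of a band, and of a string of length $>1$, joins $\EXT(0)$ to $\EXT(1)$, which completes the proof.

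I expect the middle step --- the valency bound at the level-$0$ and $\RCOR$-level-$1$ vertices --- to be the real obstacle: one must check that the possible origins of an edge at such a vertex (being a $2$-source, being a source-end $l^{\pm}$ of a $1$-target, being a $1$-source, or being isolated) genuinely combine to at most two edges, which means ruling out the wrong overlaps and counting carefully. This is precisely where Lemmas~\ref{note:corners} and \ref{lem:maximalrank} (which extremal vertices and rank equalities can occur at a given level when $r$ is maximal) and Lemma~\ref{lem:rightleftconnection} (uniqueness of the lift of a direct path to a prescribed level) do the work; the first and third steps are then essentially formal bookkeeping on top of Proposition~\ref{prop:EXTPROPERTIES}.
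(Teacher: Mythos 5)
Your proof is correct and follows the paper's approach: the key step, matching the paper exactly, is to observe that the two possible $\EXT$-edges at a vertex $v_i^y\boxtimes v_{i'}^{h([v_i^y]_l)}$ with $l\geq 1$ would force $v_{i'}^{h([v_i^y]_l)}$ to lie on two $\Gamma$-edges of the same colour (since $[v_i^y]_\bullet$ is monochromatic), contradicting Proposition~\ref{note:UDGraph}(a). The paper dismisses the remaining valency bound at level-$0$ and $\RCOR$-level-$1$ vertices as ``clear from property E1,'' whereas you spell out the case analysis (2-source, 1-source, isolated) with Lemma~\ref{lem:rightleftconnection} providing uniqueness; this is a fuller version of the same argument, not a different route.
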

\begin{proof}
Recall from property E2 that $v_i^y \boxtimes v_{i'}^{h([v_i^y]_1)}$ is connected by an edge to $v_j^x \boxtimes v_{j'}^x \in \mathbb{EXT}(0)$ if and only if $v_i^y \in \REND$, $\xymatrix{v_j^x \ar@{~}[r]^p & v_i^y}$ is the longest left direct path in $\Gamma$ ending at $v_i^y$, and there is a path $\xymatrix{v_{j'}^x \ar@{~}[r]^{p'} & v_{i'}^{h([v_i^y]_1)}}$ with $A(p') = [v_i^y]_1 A(p)$.  It is clear that there is only one such vertex, if it exists.  If such a path does exist, then there is no edge in $\mathbb{EXT}$ between $v_i^y \boxtimes v_{i'}^{h([v_i^y]_1)}$ and $v_i^y \boxtimes v_{i''}^{h([v_i^y]_2)}$, since this would mean that $v_{i'}^{h([v_i^y]_1)}$ and $v_{i''}^{h([v_i^y]_2)}$ are contained in an edge $e$ in $\Gamma$ with $w(e)=[v_i^y]_2$.  This contradicts proposition \ref{note:UDGraph}, since $v_{i'}^{h([v_i^y]_1)}$ would be in two edges of the same color.  Otherwise, $v_i^y \boxtimes v_{i'}^{h([v_i^y]_1)}$ is connected to the vertex $v_i^y \boxtimes v_{i''}^{h([v_i^y]_2)}$ in $\mathbb{EXT}$ if and only if there is an edge $\xymatrix{v_{i'}^{h([v_i^y]_1)}\ar@{-}[r]^e & v_{i''}^{h([v_i^y]_2)}}$ with $w(e)=[v_i^y]_2$, by property E3.  By definition of the Up and Down graph, this describes a unique vertex. \\
As for the other vertices, the lemma is clear from property E1.
\end{proof}
In terms of the complex $M_\bullet$, the above lemma says that the kernel of the map $\Hom(\delta_2, M_\mu))$ is spanned by the elements $\{v_i^y \boxtimes v_{i'}^y\}_{\substack{v_i^y \in \RCOR\\ i'=1,\dotsc, \beta_y}}\cup \{v_i^y \boxtimes v_{i'}^{h([v_i^y]_1)}\}_{\substack{v_i^y \in \REND\cup \LEND \\ i'=1,\dotsc, \beta_{h([v_i^y]_1)}}}$ which share no edge with vertices in $\mathbb{EXT}(2)$. 
\begin{lemma}\label{lem:stringandbandtypes}
No string in $\mathbb{EXT}$ has both endpoints in $\mathbb{EXT}(1)$.
\end{lemma}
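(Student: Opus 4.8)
The plan is to argue by contradiction, first peeling off the trivial cases and then confronting the one genuinely delicate configuration.

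\emph{Reduction.} Every edge of $\mathbb{EXT}$ joins two \emph{consecutive} levels $\mathbb{EXT}(l-1)$ and $\mathbb{EXT}(l)$, so a string of length $1$ has its two endpoints in consecutive levels and hence cannot have both endpoints in $\mathbb{EXT}(1)$. By Lemma \ref{lem:stringandband} any remaining string has length $\geq 2$ and lies inside $\mathbb{EXT}(0)\cup\mathbb{EXT}(1)$; since consecutive vertices sit in different levels, such a string has the form $u_0-u_1-\dots-u_{2k}$ with the even-indexed vertices in $\mathbb{EXT}(1)$ and the odd-indexed ones in $\mathbb{EXT}(0)$, and we must show $u_0,u_{2k}$ cannot both lie in $\mathbb{EXT}(1)$. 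So assume they do. By Lemma \ref{lem:noisolatedvertices} each $u_{2i}$ is non-isolated, and the interior ones $u_2,\dots,u_{2k-2}$ have degree $2$; by the second assertion of Lemma \ref{lem:stringandband} a degree-$2$ vertex of $\mathbb{EXT}(1)$ must be of $\RCOR$-type (the REND/LEND/ISO-type vertices of $\mathbb{EXT}(1)$ have degree at most $1$), so each interior $u_{2i}=v_{i}^{y}\boxtimes v_{i'}^{y}$ has $v_i^y\in\RCOR$ and, by property E1 of Proposition \ref{prop:EXTPROPERTIES}, both of its edges are of type E1.

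\emph{Translation into $\Gamma$.} By property E1 an edge of $\mathbb{EXT}$ between $v_j^x\boxtimes v_{j'}^x\in\mathbb{EXT}(0)$ and $v_i^y\boxtimes v_{i'}^y\in\mathbb{EXT}(1)$ records a maximal direct path $p$ in $\Gamma$ from the $2$-source $v_j^x=l^{\delta}(v_i^y)$ (for one sign $\delta$) to the $2$-target $v_i^y$, together with a parallel path $p'$ from $v_{j'}^x$ to $v_{i'}^y$ with $A(p')=A(p)$; by Lemma \ref{lem:rightleftconnection} the existence of $p'$ — and whether $v_{i'}^y$ sits above, below, or at $v_i^y$ — is controlled by the sign of $\epsilon$ at the relevant vertex, in fact the parallel path for $lp^-(v_i^y)$ is valid exactly when $v_{i'}^y$ is weakly above $v_i^y$, and that for $lp^+(v_i^y)$ exactly when it is weakly below. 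Concatenating the E1 edges of our string, the first factors of $u_0,\dots,u_{2k}$ trace a walk in $\Gamma$ that visits $2$-sources (at the level-$0$ vertices) and $2$-targets (at the level-$1$ vertices) alternately, each step being a maximal direct path, and the second factors trace a parallel walk of the same shape. The vertices $u_0$ and $u_{2k}$ are then the two places where this walk "stops."

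\emph{The core: a level-$1$ stopping point cannot occur here.} I would examine the endpoint $u_0$ (and symmetrically $u_{2k}$), distinguishing cases by its type. If $u_0=v_i^y\boxtimes v_{i'}^y$ is of $\RCOR$-type, its unique edge uses one of the two maximal direct paths ending at $v_i^y$, say $lp^{\delta}(v_i^y)$ (the choice forced by whether $v_{i'}^y$ is weakly above or below $v_i^y$, via Lemma \ref{lem:rightleftconnection}); for the other path $lp^{-\delta}(v_i^y)$ the source $l^{-\delta}(v_i^y)$ at its far end is a source of positive distance from $v_i^y$, hence not isolated, and if it lay in $\LCOR$ together with the matching position of the second factor we would get a second E1 edge at $u_0$. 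I would then invoke Lemma \ref{note:corners} at $v_i^y$ (giving the strict inequality $r(a_2)+r(b_2)>\beta_y$ for the two incoming colors) and maximality of $r$ along the monochromatic chain through $l^{-\delta}(v_i^y)$ via Lemma \ref{lem:maximalrank}, exactly as in the proof of Lemma \ref{lem:noisolatedvertices}, to show that the position of the second factor along $lp^{-\delta}$ is in fact saturated by an edge of $\Gamma$ of the right colour; chasing that edge through property E1 produces the missing second edge at $u_0$, contradicting $\deg u_0=1$. If instead $u_0$ is of $\REND$-, $\LEND$-, or $\ISO$-type, the same corner and maximal-rank input (literally the argument used in the $\REND$ and $\ISO$ paragraphs of the proof of Lemma \ref{lem:noisolatedvertices}) produces an edge of $\Gamma$ labelled $[v_i^y]_2$ (respectively $[v_i^y]_2^{\pm}$) at the vertex carrying the second factor, hence by property E2, E3, or E4 an edge of $\mathbb{EXT}$ from $u_0$ up into $\mathbb{EXT}(2)$ — again impossible, both because $\deg u_0=1$ and because the string lies in $\mathbb{EXT}(0)\cup\mathbb{EXT}(1)$. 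In every case $u_0$ fails to be an endpoint, the desired contradiction.

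\emph{Main obstacle.} The conceptual engine is exactly the one behind Lemma \ref{lem:noisolatedvertices}: maximality of $r$ forbids a maximal direct path in $\Gamma$ from "stopping too early," so a stopping point of the traced walk always carries an extra edge of $\mathbb{EXT}$. The real work is bookkeeping: one must keep the sign conventions of Lemma \ref{lem:rightleftconnection} (left positive/negative, and the above/below alternatives [A1]/[A2]/[B1]/[B2]) consistent with the corner configuration of Lemma \ref{note:corners}, and handle the four endpoint types together with the position of the second factor relative to the first, so that the correct rank equality is fed into Lemma \ref{lem:maximalrank} in each case. I expect this case management — rather than any single estimate — to be the crux of a complete write-up.
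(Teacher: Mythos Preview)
Your reduction and translation paragraphs are fine, and you correctly identify the bookkeeping around Lemma~\ref{lem:rightleftconnection} as central. The gap is in the ``core'' paragraph: you try to derive a contradiction \emph{locally at a single endpoint} $u_0$, but that cannot work, because degree-$1$ vertices of $\RCOR$-type genuinely exist in $\mathbb{EXT}(1)$ --- this is exactly why strings with \emph{one} endpoint in $\mathbb{EXT}(1)$ occur. Concretely, if $u_0=v_i^y\boxtimes v_{i'}^y$ with $v_i^y\in\RCOR$ and (say) $i'<i$, then Lemma~\ref{lem:rightleftconnection} supplies a parallel for $lp^-(v_i^y)$ but says nothing about $lp^+(v_i^y)$; there is no reason the second edge should exist, and neither Lemma~\ref{note:corners} nor Lemma~\ref{lem:maximalrank} manufactures the missing parallel path. (Your phrase ``exactly as in the proof of Lemma~\ref{lem:noisolatedvertices}'' is misleading here: that proof handles the $\RCOR$ case purely via Lemma~\ref{lem:rightleftconnection}, not via rank inequalities.) The same objection applies to your $\REND/\LEND/\ISO$ endpoint case: if $u_0$ already has its single edge going down to $\mathbb{EXT}(0)$, then by Lemma~\ref{lem:stringandband} it has \emph{no} edge to $\mathbb{EXT}(2)$, so you cannot ``produce'' one.

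What the paper actually does is use \emph{both} endpoints together. Assuming $u_0$ is an endpoint in $\mathbb{EXT}(1)$ pins down the sign $\epsilon(y_0,\,\cdot\,)$ at the first $2$-target (from the non-existence of the second parallel there), and then the above/below dichotomy of Lemma~\ref{lem:rightleftconnection} (parts [A1]/[A2]/[B1]/[B2]) is chased along the alternating walk to the last vertex $u_{2k}$. At that end the forced sign of $\epsilon(y_n,q_n)$, together with $\epsilon(y_n,p_n)=-\epsilon(y_n,q_n)$, guarantees via Lemma~\ref{lem:rightleftconnection} that the parallel of the \emph{unused} path $p_n$ does land at the second factor --- giving $u_{2k}$ a second edge. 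When the far endpoint is of $\REND$-type, the same sign-chasing instead forces an outgoing edge labelled $[v_{i_n}^{y_n}]_1$ at the first-factor vertex $v_{i_n}^{y_n}$ in $\Gamma$, contradicting that it is a $1$-target. So the missing ingredient in your sketch is precisely this propagation step linking the two endpoints; without it the local argument at $u_0$ alone does not close.
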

\begin{proof}
Suppose that there is a string with one endpoint $v_{j_0}^{y_0} \boxtimes v_{j_0'}^{y_0'} \in \mathbb{EXT}(1)$ and containing the following substring:
\begin{align*}
\xymatrix@R=3ex{
v_{i_1}^{x_1} \boxtimes v_{i_1'}^{x_1} \ar@{-}[r] \ar@{-}[dr]& v_{j_0}^{y_0} \boxtimes v_{j_0'}^{y_0'}\\
v_{i_2}^{x_2} \boxtimes v_{i_2'}^{x_2} \ar@{-}[r] \ar@{-}[dr]& v_{j_1}^{y_1} \boxtimes v_{j_1'}^{y_1'}\\
 \ar@{}[r]|{\vdots} \ar@{-}[dr]  &  \\
v_{i_n}^{x_n} \boxtimes v_{i_n'}^{x_n} \ar@{-}[r] \ar@{-}[dr] & v_{j_{n-1}}^{y_{n-1}} \boxtimes v_{j_{n-1}'}^{y_{n-1}'}\\
& v_{j_n}^{y_n} \boxtimes v_{j_n'}^{y_n'}}
\end{align*}
with $v_{i_t}^{x_t}\boxtimes v_{i_t'}^{x_t} \in \mathbb{EXT}(0)$ and $v_{j_s}^{y_s} \boxtimes v_{j_s'}^{y_s'} \in \mathbb{EXT}(1)$.  We will show that the string does not end in the vertex $v_{j_n}^{y_n}\boxtimes v_{j_n'}^{y_n'}$.  Recall by definition of the graph $\mathbb{EXT}$ that for such a string to exist, we must have paths 
\begin{align*}
\xymatrix@R=2ex{
v_{i_1}^{x_1} \ar@{~}[r]^{p_0} \ar@{~}[dr]^{q_1} & v_{j_0}^{y_0}\\
v_{i_2}^{x_2} \ar@{~}[r]^{p_1} \ar@{~}[dr]^{q_2} & v_{j_1}^{y_1}\\
\ar@{~}[dr] \ar@{}[r]|{\vdots} & \\
v_{i_n}^{x_n} \ar@{~}[r] \ar@{~}[dr]_{q_n} &v_{j_{n-1}}^{y_{n-1}} \\
& v_{j_n}^{y_n}}
\end{align*}
in $\Gamma$.  A small notational point: if $p$ is a direct path which starts (resp. ends) in the vertex $v_l^z$, with $e$ the edge of $p$ incident to said vertex, then we write $\epsilon(z,p):=\epsilon(z,c(w(e)))$.
\begin{itemize}  
\item[\underline{Case 1:}] Assume that $v_{j_0}^{y_0}, v_{i_n}^{y_n} \in \RCOR$.  Let $p_n$ be the longest left path terminating in $v_{i_n}^{y_n}$ with $p_n \neq q_n$ (this is guaranteed since $v_{i_n}^{y_n}$ is a 2-target).  Similarly, let $q_0$ be the longest left path terminating in $v_{j_0}^{y_0}$ with $q_0\neq p_0$.  
\begin{itemize}
\item[A:] If $i_0'<i_0$, then $\epsilon(y_0, p_0)=-1$.  If not, then by lemma \ref{lem:rightleftconnection} there would be a path $q_0'$ terminating at $v_{i_0}^{y_0}$ in $\Gamma$ with $A(q_0')=A(q_0)$.  By definition of the graph $\mathbb{EXT}$, then, there would be an other edge terminating at the vertex $v_{i_0}^{y_0} \boxtimes v_{i_0'}^{y_0}$.  
\begin{itemize}
\item[A1:] if $i_n'>i_n$, then $\epsilon(y_n, q_n)=-1$ by lemma \ref{lem:rightleftconnection}.  Thus, by proposition \ref{note:UDGraph}, $\epsilon(y_n, p_n)=1$.  Therefore, again by lemma \ref{lem:rightleftconnection}, there is a path $p_n'$ in $\Gamma$ terminating at $v_{i_n'}^{y_n}$ with $A(p_n')=A(p_n)$, so there is another edge in $\mathbb{EXT}$ containing the vertex $v_{i_n}^{y_n} \boxtimes v_{i_n'}^{y_n}$.  
\item[A2:] if $i_n'<i_n$, then $\epsilon(y_n, q_n)=1$ by lemma \ref{lem:rightleftconnection}.  Thus, by proposition \ref{note:UDGraph}, $\epsilon(y_n, p_n)=-1$.  Therefore, again by lemma \ref{lem:rightleftconnection}, there is a path $p_n'$ in $\Gamma$ terminating at $v_{i_n'}^{y_n}$ with $A(p_n')=A(p_n)$, so there is another edge in $\mathbb{EXT}$ containing the vertex $v_{i_n}^{y_n} \boxtimes v_{i_n'}^{y_n}$.  
\end{itemize}
\item[B:] If $i_0'>i_0$, then $\epsilon(y_0, p_0)=1$, by the same reasoning at Subcase A.  The subcases B1 and B2 are analogous to A1 and A2.
\end{itemize} 

\item[\underline{Case 2:}] Assume that $v_{j_0}^{y_0}\in \RCOR$ while $v_{i_n}^{y_n} \in \REND$.  We will show that Let $(i_n')^-$ be the integer such that there is an edge $\xymatrix{v_{(i_n')^-}^{y_n} \ar@{-}[r]^e & v_{i_n'}^{h([v_{i_n}^{y_n}]_1)}}$ in $\Gamma$ with $w(e)=[v_{i_n}^{y_n}]_1$.  This is guaranteed to exist by the definition of $[v_{i_n}^{y_n}]_1$ (refer to property E2 in proposition \ref{prop:EXTPROPERTIES}).  
\begin{itemize}
\item[A:] Suppose $(i_0)'<i_0$.  Then $\epsilon(y_0, p_0)=-1$ by definition of $\Gamma$.
\begin{itemize}
\item[A1:] If $(i_n')^-<i_n$, then $\epsilon(y_n,q_n)=1$, and since there is a path $eq_n$ in $\Gamma$ with $w(e)=[v_{i_n}^{y_n}]_1$, we must have that $\epsilon(y_n, [v_{i_n}^{y_n}]_1)=-1$.  If this were the case, then by the definition of the edges in $\Gamma$, there would be an edge $e'$ with $w(e')=[v_{i_n}^{y_n}]_1$ with one end at the vertex $v_{i_n}^{y_n}$.  This contradicts the assumption that $v_{i_n}^{y_n}$ is a 1-target.
\item[A2:] Similarly, if $i_n^- > i_n$, then $\epsilon(y_n, q_n)=1$, and since there is a path $eq_n$ in $\Gamma$ with $w(e)=[v_{i_n}^{y_n}]_1$, we have that $\epsilon(y_n, [v_{i_n}^{y_n}]_1)=-1$.  If this were the case, then there would be an edge $e'$ with $w(e')=[v_{i_n}^{y_n}]_1$ with one end at the vertex $v_{i_n}^{y_n}$, contradicting the assumption of $v_{i_n}^{y_n}$ being a 1-target.  
\end{itemize}
\item[B:] Suppose that $(i_0)'>i_0$.  Then $\epsilon(y_0,p_0)=1$ by definition of $\Gamma$.  Subcases b1 and b2 are the same as above with signs of $\epsilon$ flipped.
\end{itemize}

\item[\underline{Case 3:}] Assume that $v_{i_0}^{y_0}\in \REND$ and $v_{i_n}^{y_n}\in \RCOR$.  Let $p_n$ be the left direct path in $\Gamma$ of maximal length with endpoint $v_{i_n}^{y_n}$ and $p_n\neq q_n$ (guaranteed since the vertex is a 2-target).  As above, let $(i_0')^-$ be the integer such that there is an edge $e$ with endpoints $v_{(i_0')^-}^{y_0}$ and $v_{i_0'}^{h([v_{i_0}^{y_0}]_1)}$.
\begin{itemize}
\item[A:] Suppose that $(i_0')^-<i_0$.  Then $\epsilon(y_0, [v_{i_0}^{y_0}]_1)=1$, so $\epsilon(y_0,p_0)=-1$.  
\begin{itemize}
\item[A1:] If $i_n'<i_n$, then $\epsilon(y_n,q_n)=1$, so $\epsilon(y_n,p_n)=-1$.  But then by lemma \ref{lem:rightleftconnection}, there is an edge $p_n'$ with $A(p_n)=A(p_n')$ one of whose endpoints is $v_{i_n'}^{y_n}$.  
\item[A2:] If $i_n'>i_n$, then $\epsilon(y_n,q_n)=1$, $\epsilon(y_n,p_n)=1$.  By lemma \ref{lem:rightleftconnection}, there is an edge $p_n'$ with $A(p_n)=A(p_n')$ one of whose endpoints is $v_{i_n'}^{y_n}$.  
\end{itemize}
\item[B:] If $(i_0')^->i_0$, then the same arguments hold with the values of $\epsilon$ exchanged.
\end{itemize}

\item[\underline{Case 4:}] Assume that $v_{i_0}^{y_0}, v_{i_n}^{y_n} \in \REND$.  
\begin{itemize}
\item[A:] Suppose $(i_0')^-<i_0$, so $\epsilon(y_0, [v_{i_0}^{y_0}]_1) = -1$ and $\epsilon(y_0, p_0)=1$.
\begin{itemize}
\item[A1:] If $(i_n')^-<i_n$, then $\epsilon(y_n, q_n)=-1$ by lemma \ref{lem:rightleftconnection}.  But if this were the case, then there would be an edge $e$ in $\Gamma$ with $w(e)=[v_{i_n}^{y_n}]_1$ and one of whose endpoints was $v_{i_n}^{y_n}$.  This contradicts the assumption that said vertex was a 1-target.
\item[A2:] If $(i_n')^->i_n$, then $\epsilon(y_n,q_n) = 1$ by lemma \ref{lem:rightleftconnection}.  If this were the case, then there would be an edge $e$ in $\Gamma$ with $w(e)=[v_{i_n}^{y_n}]_1$ and one of whose endpoints was $v_{i_n}^{y_n}$.  This contradicts the assumption that said vertex was a 1-target.
\end{itemize}
\item[B:] If $(i_0')^-<i_0$, then the same argument holds with the values of $\epsilon$ exchanged.
\end{itemize}
\end{itemize}
\end{proof}

\subsection{Homology and the $\EXT$ graph}
Let us pause to interpret the above results into data concerning the maps $\Hom(\delta(M_\lambda)_1, M_\mu)$ and $\Hom(\delta(M_\lambda)_0, M_\mu)$.  Recall that a vertex $v_i^x\boxtimes v_j^y$ corresponds to the basis element $v_i^x \otimes e_j^y$.  By lemma \ref{lem:noisolatedvertices}, there are no isolated vertices in $\EXT(1)$, and by lemma \ref{lem:stringandband}, if $\Hom(\delta(M_\lambda)_1, M_\mu): v_i^x \otimes e_j^y \mapsto v_{i'}^{x'} \otimes e_{j'}^{y'}$, then after reordering the chosen basis, $\Hom(\delta(M_\lambda)_1, M_\mu)$ takes the form
\[
\begin{bmatrix} 
1 & 0 & \dotsc & 0 \\
0 & \ast & \dotsc & \ast\\
\vdots & \vdots & \ddots & \vdots\\
0 & \ast & \dotsc & \ast \end{bmatrix}
\]
In particular, $\ker(\Hom(\delta(M_\lambda)_1, M_\mu))$ is precisely the span of those vertices in $\EXT(1)$ that have an edge in common with a vertex in $\EXT(0)$.  

It remains to be shown that every other vertex in $\EXT(1)$ corresponds to a basis element that is in the image of $\Hom(\delta(M_\lambda)_0, M_\mu)$.  This will show that the image of said map equals the kernel of $\Hom(\delta(M_\lambda)_1, M_\mu)$.  Let us denote by $C_1, C_2,\dotsc, C_m$ the connected components of the induced subgraph on the vertices $\EXT(0)\cup \EXT(1)$.  Then $\Hom(\delta(M_\lambda)_1, M_\mu)$ can be written in block form:
\[ \begin{bmatrix} \delta_{C_1} & 0 &   &\dotsc& 0 \\ 0 & \delta_{C_2}  & \ddots & & \vdots \\ \vdots & & \ddots & \ddots & \\
0 & \dotsc & 0 & \delta_{C_m} & 0\end{bmatrix}\] Therefore, it suffices to show that each block corresponding to a connected component is surjective.

\begin{lemma}\label{lem:surjectivestrings} If $v_j^x \boxtimes v_i^y \in \EXT(1)$ is contained in a string between levels 0 and 1, then $v_j^x \otimes e_i^y \in \im(\Hom(\delta(M_\lambda)_0, M_\mu))$.  
\end{lemma}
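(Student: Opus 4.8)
The plan is to analyze the connected component $C$ of the subgraph on $\EXT(0)\cup\EXT(1)$ containing the given vertex $v_j^x\boxtimes v_i^y$, using Lemma \ref{lem:stringandband} (which says $C$ is a string or band) together with Lemma \ref{lem:stringandbandtypes} (no string in $\EXT$ has both endpoints in $\EXT(1)$). First I would record what these two facts force: since $v_j^x\boxtimes v_i^y$ lies on a string between levels $0$ and $1$, by Lemma \ref{lem:stringandbandtypes} that string cannot have both ends in $\EXT(1)$, hence at least one endpoint is in $\EXT(0)$. Combined with the shape of $\Hom(\delta(M_\lambda)_1,M_\mu)$ recorded just before the lemma — each vertex in $\EXT(1)$ lying on such a string has exactly one incident edge to $\EXT(0)$, so the relevant block $\delta_{C}$ of $\Hom(\delta(M_\lambda)_1,M_\mu)$ has, in a suitable ordering of the basis, the bidiagonal/alternating form dictated by the string — I would reduce to a purely linear-algebra statement about the block map $\Hom(\delta(M_\lambda)_0,M_\mu)$ restricted to $C$.

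Next I would set up the explicit string. Writing the string in $C$ as an alternating sequence
\[
u_0 - w_1 - u_1 - w_2 - \dots,
\]
with $u_k\in\EXT(0)$ and $w_k\in\EXT(1)$, the target vertex $v_j^x\boxtimes v_i^y$ is one of the $w_k$. The basis elements $v_j^x\otimes e_i^y$ indexed by the $\EXT(0)$-vertices $u_k$ are precisely the domain basis vectors on which $\Hom(\delta(M_\lambda)_0,M_\mu)$ acts; by property E1 (for the $\RCOR$ case) and properties E2–E4 (for the $\REND$, $\LEND$, $\ISO$ cases), the coefficient $s$ of $w_k$ in $\Hom(\delta(M_\lambda)_0,M_\mu)(u_k)$ equals $\pm 1$ or $\pm\lambda_b$ — in all cases a unit of $\C$ — and the only $\EXT(1)$-vertices appearing in the image of $u_k$ are the (at most two) neighbors of $u_k$ on the string. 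Thus in the ordered basis $u_0,u_1,u_2,\dots$ (domain) and $w_1,w_2,\dots$ (the relevant part of the codomain), the matrix of the restricted map is bidiagonal with unit entries, possibly with a final short column depending on which end of the string the $w_k$ in question sits at. Reading off: if the string starts $u_0 - w_1 - \dots$ (endpoint in $\EXT(0)$ on the left), then $w_1$ is hit by $u_0$ alone; peeling off $w_1$, then $w_2$ is hit by $u_1$ up to the already-accounted contribution of $u_1$ to $w_1$, and so on. Concretely I would argue by induction along the string from an $\EXT(0)$-endpoint: $w_1 = s_1^{-1}\,\Hom(\delta(M_\lambda)_0,M_\mu)(u_0)$ modulo $w_2$, then substitute to clear $w_2$ using $u_1$, etc., so that every $w_k$ — in particular our $v_j^x\otimes e_i^y$ — lies in the image.

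The main obstacle I expect is bookkeeping the two cases for the endpoints of the string and making sure the induction terminates correctly: a string of length $\geq 2$ between levels $0$ and $1$ could have (by Lemma \ref{lem:stringandbandtypes}) either both endpoints in $\EXT(0)$, or one in $\EXT(0)$ and one in $\EXT(1)$; in the first case the $\EXT(0)$ basis vectors outnumber the $\EXT(1)$ ones by exactly one and the reduction is a clean triangular elimination, while in the second they are equal in number and one must start the elimination from the $\EXT(0)$-endpoint to avoid running out of domain vectors. I would handle this by always initiating the telescoping elimination at an $\EXT(0)$-endpoint — which exists by Lemma \ref{lem:stringandbandtypes} — so that the incidence pattern is lower-triangular with unit diagonal, forcing surjectivity onto the span of the $w_k$'s in $C$. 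A minor secondary point is to confirm, via E1–E4, that no $\EXT(0)$-vertex on the string contributes (with nonzero coefficient) to an $\EXT(1)$-vertex outside $C$ or to a level-$\geq 2$ vertex in a way that obstructs the argument; but since $C$ is a connected component of the level-$0$–$1$ subgraph and the level-$\geq 2$ edges only touch the $\REND/\LEND/\ISO$-type vertices that by Lemma \ref{lem:stringandband} sit on strings of length one, this does not interfere.
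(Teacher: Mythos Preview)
Your proposal is correct and follows essentially the same route as the paper: invoke Lemma~\ref{lem:stringandbandtypes} to force at least one endpoint of the string into $\EXT(0)$, split into the two cases (both endpoints in level~0, or one in each level), and observe that in either case the block $\delta_C$ is bidiagonal with invertible diagonal entries, hence surjective. The paper's version is terser---it simply says the block is upper triangular with nonzero diagonal (square case) or has one more column than row with nonzero diagonal (non-square case)---but your telescoping elimination from an $\EXT(0)$-endpoint is exactly the same linear-algebra content unpacked. One small correction: the nonzero coefficients are drawn from $\{\pm 1,\pm\lambda_b,\pm\mu_{b'}\}$, not just $\{\pm 1,\pm\lambda_b\}$, since the $\mu$'s enter through the action on $M_\mu$; this does not affect your argument, as all are units.
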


\begin{proof}
Suppose that the vertex is contained in the connected component $C_i$, and that $C_i$ is a string.  We have shown in lemma \ref{lem:stringandbandtypes} that if a string is between levels 0 and 1, then either one endpoint lies in level 0 and the other in level 1, or  both endpoints lie in level 0.  In the first case, $\delta_{C_i}$ is strictly upper triangular with nonzero entries on the diagonal which must be from the set $\{\pm 1, \pm \lambda, \pm \mu\}$.  Therefore, the map is invertible.  In the second case, there is one more vertex in level $\EXT(0)$ than in $\EXT(1)$, and $(\delta_{C_i})_{j,j}\neq 0$ for each $j$, so the given map is surjective.
\end{proof}

\begin{lemma}\label{lem:bandexactness}
If $C_i$ is a band, then $\delta(C_i)$ is an isomorphism.
\end{lemma}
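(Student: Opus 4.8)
The plan is to reduce the lemma to a determinant computation and then to the hypothesis $\lambda_b\neq\lambda'_{b'}$ of Theorem~\ref{thm:vanishingextbands}(a), under which this lemma is being applied (write $\mu=\lambda'$ for the target parameter, as above). By Lemma~\ref{lem:stringandband} a band component $C_i$ of $\EXT$ is a cycle that alternates between levels $\EXT(0)$ and $\EXT(1)$; list its $\EXT(0)$-vertices as $u_1,\dots,u_n$ and its $\EXT(1)$-vertices as $w_1,\dots,w_n$, cyclically indexed so that $w_t$ is joined to $u_t$ and to $u_{t+1}$ (subscripts modulo $n$). Since $C_i$ meets the two levels in the same number of vertices, $\delta(C_i)$ is a square matrix, and since each $w_t$ lies on exactly two edges (Lemma~\ref{lem:stringandband}), in the evident bases $\delta(C_i)$ has exactly two nonzero entries in each row and column. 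Writing $\alpha_t$ for the entry from $u_t$ to $w_t$ and $\gamma_t$ for the entry from $u_{t+1}$ to $w_t$, an element $\sum_s c_s u_s$ of $\ker\delta(C_i)$ satisfies $c_t\alpha_t+c_{t+1}\gamma_t=0$ for every $t$, so $c_{t+1}=-(\alpha_t/\gamma_t)c_t$ and, going once around the cycle, $c_1=(-1)^n\bigl(\prod_t\alpha_t/\prod_t\gamma_t\bigr)c_1$; hence $\ker\delta(C_i)=0$ precisely when $\prod_t\alpha_t\neq(-1)^n\prod_t\gamma_t$, and then $\delta(C_i)$, being square with trivial kernel, is an isomorphism. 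So it suffices to compute these two products and show they disagree.

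Next I would read off the entries. A vertex $w_t$ of $C_i$ in $\EXT(1)$ has the form $v_{i_t}^{y_t}\boxtimes e_{i_t'}^{y_t}$ with $v_{i_t}^{y_t}\in\RCOR$, and by Proposition~\ref{prop:projectiveresolution}(i) the corresponding block of $\delta(M_\lambda)_0$ is $\bigl[\,lp^+(v_{i_t}^{y_t})\ ;\ -lp^-(v_{i_t}^{y_t})\,\bigr]$, the $lp^-$-branch carrying an extra factor $\lambda_b$ precisely when $v_{i_t}^{y_t}=\Theta(b)$; one branch targets the $\EXT(0)$-summand $u_t$ and the other targets $u_{t+1}$, by the definitions of $l^{\pm}$. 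Applying $\Hom(-,M_\mu)$ and using the distinguished bases, each of $\alpha_t,\gamma_t$ is $\pm$ the scalar by which the direct path $lp^{\pm}(v_{i_t}^{y_t})$ acts on the relevant basis vector of $M_\mu$, times the possible $\lambda_b$. Because $\C Q/I_c$ is finite dimensional every direct path in $\Gamma$ has bounded length, and such a path, being a walk inside a single connected component of $\Gamma$, crosses any distinguished band-vertex $\Theta(\cdot)$ at most once; hence each such scalar is a monomial in the $\mu$'s, and the only minus signs occurring anywhere in $\delta(C_i)$ are those attached to the $lp^-$-branches.

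Now compare the two products around $C_i$. By property E1 of Proposition~\ref{prop:EXTPROPERTIES} the two ``strands'' of $C_i$ (first and second tensor factor) each run along a connected component of $\Gamma$, and since $C_i$ is a cycle these components must be bands; call them $b$ (the $\Theta$-carrying one) and $b'$. The cyclic words of $b$ and $b'$ are primitive (Butler--Ringel \cite{BR}), so $C_i$ winds exactly once along each strand; in one trip around, the differential inserts the scalar $\lambda_b$ exactly once (at the unique occurrence of $\Theta(b)$, on an $lp^-$-branch), while the action on $M_\mu$ inserts $\mu_{b'}$ exactly once (at the unique occurrence of the distinguished target of $b'$ in $M_\mu$ met by the opposite strand), and, by Proposition~\ref{note:UDGraph} governing the signs $\epsilon$, the minus signs from the $lp^-$-branches combine with the factor $(-1)^n$ to leave a genuine difference. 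The upshot is $\prod_t\alpha_t-(-1)^n\prod_t\gamma_t=\pm(\mu_{b'}-\lambda_b)$, the same difference of parameters that governs the classical fact that two homogeneous band modules with distinct parameters have no extensions. In the setting of Theorem~\ref{thm:vanishingextbands}(a) we are given $\lambda_b\neq\lambda'_{b'}$ for \emph{all} bands $b,b'$, so $\mu_{b'}-\lambda_b\neq0$, whence $\ker\delta(C_i)=0$ and $\delta(C_i)$ is an isomorphism.

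I expect the main obstacle to be exactly the scalar-and-sign bookkeeping of the third paragraph: identifying the bands $b,b'$ underlying $C_i$ (via E1--E4), and checking that once around the cycle \emph{exactly} one factor $\lambda_b$ is inserted by $\delta(M_\lambda)_0$, \emph{exactly} one factor $\mu_{b'}$ is picked up from $M_\mu$, every other $\mu$-factor cancels between the two strands, and the $lp^-$ minus signs combine with $(-1)^n$ to produce a difference $\pm(\mu_{b'}-\lambda_b)$ rather than a sum; the ``winds exactly once'' step is where primitivity of band cyclic words from \cite{BR} is essential, and the sign step is where one must exclude a ``twist'' in the cyclic labeling of $C_i$. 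The degenerate case $n=1$, in which $C_i$ consists of a single $\EXT(0)$-vertex and a single $\EXT(1)$-vertex joined by two edges, is treated the same way, reading the ``two products around the cycle'' as the two summands comprising the unique matrix entry. The reduction to a determinant, the shape of the matrix, and the final conclusion are formal.
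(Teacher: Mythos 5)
Your argument follows essentially the same route as the paper's: identify a band component $C_i$ of $\EXT$ as a cycle alternating between levels $0$ and $1$ with an equal number of vertices in each level, observe via Proposition~\ref{prop:projectiveresolution}(i) that each level-$1$ vertex contributes two entries of the form $\pm lp^+$ and $\mp\lambda_b\,lp^-$ applied to $M_\mu$, and conclude that $\det\delta(C_i)=\pm(\lambda-\mu)\neq 0$. The only cosmetic difference is that the paper writes out the bidiagonal-plus-corner matrix explicitly and computes its determinant, whereas you solve the kernel condition recursively around the cycle, which amounts to the same determinant computation. Your extra discussion of why exactly one $\lambda_b$ and one $\mu_{b'}$ appear (via both strands winding once around their respective band components, invoking primitivity from \cite{BR}) makes explicit a point the paper treats pictorially and implicitly; this is a reasonable and correct elaboration, not a different proof.
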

\begin{proof}
If a component $C_i$ is cyclic, then it must come from the following cycles on $\Gamma_{Q, c}(\beta, r, \epsilon)$:
\begin{align*}
\xymatrix{
v_{i_0}^{x_0} \ar@{~}[dddr] \ar@{~}[r]^{p_1} & v_{j_0}^{y_0}\\
v_{i_1}^{x_1} \ar@{~}[ur]^{q_1} \ar@{~}[r]^{p_2} & v_{j_1}^{y_1}\\
 \ar@{~}[ur]\ar@{}[r]|{\vdots} & \\
 v_{i_n}^{x_n} \ar@{~}[ur]^{q_{n-1}}\ar@{~}[r]_{p_n} & v_{j_n}^{y_n}} & \qquad &\xymatrix{
v_{i_0'}^{x_0} \ar@{~}[dddr] \ar@{~}[r]^{p_1} & v_{j_0'}^{y_0}\\
v_{i_1'}^{x_1} \ar@{~}[ur]^{q_1} \ar@{~}[r]^{p_2} & v_{j_1'}^{y_1}\\
 \ar@{~}[ur]\ar@{}[r]|{\vdots} & \\
 v_{i_n'}^{x_n} \ar@{~}[ur]^{q_{n-1}}\ar@{~}[r]_{p_n} & v_{j_n'}^{y_n}} 
\end{align*}
In particular, by definition of $\delta(M_\lambda)_1$, the matrix of $\delta(C_i)$ takes the following form:
\[
\begin{bmatrix}
1  & -\lambda & 0 & 0 & \dotsc & 0 \\
0 & \pm 1 & \pm 1 & 0& \dotsc& 0 \\ 
0 & 0 & \pm 1 & 0 & \dotsc & 0 \\
\vdots &  & \ddots &&& \vdots\\
\pm 1 & 0 & \dotsc & && \pm 1
\end{bmatrix} \]
where one of the diagonal entries is $\mu$, and in each row there is exactly one positive and one negative entry.  Then it is an elementary exercise (expanding by the first column and calculating the determinant of upper or lower triangular matrices) to show that $\det \delta(C_i)=\pm (\lambda-\mu)$.  Since, by assumption, $\lambda\neq \mu$, we have that $\delta(C_i)$ is nonsingular.  
\end{proof}
Now that part (a) of the theorem is proved, we move to part (b), recalled here:
\begin{proposition}
Suppose that $\Gamma_{Q, c}(\beta, r, \epsilon)$ consists of a single band component, and let $\lambda \in (\C^*)^{B(\Gamma)} = \C^*$.  Let $M_\lambda = M_{Q, c}(\beta, r, \epsilon)_\lambda$.  Then $$\Ext^1_{\C Q/I_c}(M_\lambda, M_\lambda) =1.$$
\end{proposition}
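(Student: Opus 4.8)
The plan is to compute $\Ext^1_{\C Q/I_c}(M_\lambda, M_\lambda)$ as the first cohomology of the complex $\Hom(P(M_\lambda)_\bullet, M_\lambda)$ obtained from the projective resolution of Proposition \ref{prop:projectiveresolution}, using exactly the machinery of the $\EXT$-graph that has just been developed. The key point is that the argument proving part (a) of Theorem \ref{thm:vanishingextbands} goes through almost verbatim, \emph{except} at the unique band component, where the determinant computation of Lemma \ref{lem:bandexactness} now yields $\det\delta(C_i) = \pm(\lambda - \mu)$ with $\mu = \lambda$, hence $0$. So one extra dimension of cohomology appears, and the task is to show it is \emph{exactly} one, not more.

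First I would observe that since $\Gamma_{Q,c}(\beta, r, \epsilon)$ is a single band, it has no $\ISO$, $\LEND$, or $\REND$ vertices: every vertex is a 2-source or a 2-target. Consequently $P(M_\lambda)_l = 0$ for $l \geq 2$, so $\Ext^i = 0$ for $i \geq 2$ and the resolution is just $0 \to P(M_\lambda)_1 \to P(M_\lambda)_0 \to M_\lambda \to 0$ with $P(M_\lambda)_1 = \bigoplus_{v_i^y \in \RCOR} P_y$ and $P(M_\lambda)_0 = \bigoplus_{v_j^x \in \LCOR} P_x$. Then $\EXT = \EXT(0) \cup \EXT(1)$ with all the structure of a single band, and by Lemma \ref{lem:stringandband} and Lemma \ref{lem:stringandbandtypes} the graph $\EXT$ decomposes into connected components $C_1, \dots, C_m$, each of which is either a string with exactly one endpoint in $\EXT(0)$ and one in $\EXT(1)$, or a band between levels $0$ and $1$ (there are no strings with both endpoints in level $1$, and no $\EXT(l)$ for $l\ge 2$ to worry about). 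By Lemma \ref{lem:surjectivestrings} every string component contributes a $\delta_{C_i}$ that is invertible (upper triangular, nonzero diagonal). Thus $\Ext^1 = \ker(\text{zero map})/\im\,\delta$ collapses to $\bigoplus_i \operatorname{coker}(\delta_{C_i})$, and only the band components can contribute.

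Next I would show there is \emph{exactly one} band component in $\EXT$, namely the one ``tracing around'' the band $\Gamma$ itself with $\Theta(b)$ as the distinguished target. Indeed, a band component of $\EXT$ corresponds (via properties E1, E2) to a closed walk in $\Gamma$, and the only closed walk available in the single-band graph $\Gamma$ is the band $b$ (up to the parametrization of the $\beta_x$ copies of each level); the accounting of the $\Theta(b)$-vertex forces precisely one such cycle in $\EXT$ in which the scalar $\lambda_b$ (from the map $(M_\lambda)_a$ at $\Theta(b)$) and the scalar $\mu_b = \lambda_b$ (from the target copy $M_\mu = M_\lambda$) both appear. For this component, Lemma \ref{lem:bandexactness}'s determinant computation gives $\det\delta(C) = \pm(\lambda_b - \lambda_b) = 0$; moreover the matrix there, after removing one row and column, is triangular with nonzero diagonal, so $\delta(C)$ has rank exactly one less than full, i.e. $\dim\operatorname{coker}\delta(C) = 1$. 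Summing over all components then gives $\dim \Ext^1_{\C Q/I_c}(M_\lambda, M_\lambda) = 1$.

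\textbf{The main obstacle} I anticipate is pinning down rigorously that $\EXT$ contains exactly one band component (and that its associated matrix has corank exactly $1$) — this is where one must be careful about how the cyclic word $\omega_b$ of the band interacts with the $\beta_x$-fold multiplicities at each level and with the single distinguished vertex $\Theta(b)$, to rule out the degenerate possibilities of either no cyclic $\EXT$-component or several. Once the structure of $\EXT$ is understood, the rest is the elementary determinant bookkeeping already packaged in Lemma \ref{lem:bandexactness}, specialized to $\mu = \lambda$. A secondary (minor) point to check is that $\Ext^1$ receives \emph{no} contribution one might have expected from level-$2$ phenomena — but this is immediate since $P(M_\lambda)_l = 0$ for $l \geq 2$ when $\Gamma$ is a band.
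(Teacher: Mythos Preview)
Your proposal is correct and follows essentially the same route as the paper: reduce to the $\EXT$-graph between levels $0$ and $1$ (since a pure band has projective dimension one), observe that string components contribute nothing to the cokernel, identify the unique band component of $\EXT$ coming from the pair $(b,b)$, and show its matrix has corank exactly one. The only cosmetic differences are that the paper verifies corank one by an explicit column reduction of the matrix $C$ rather than by exhibiting a nonvanishing $(n-1)\times(n-1)$ minor, and that string blocks are in general surjective (not necessarily square/invertible, since both endpoints may lie in $\EXT(0)$)---but this does not affect your conclusion.
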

\begin{proof}
The projective dimension of $M_\lambda$ is one by the constructions above.  Furthermore, there is exactly one band component in the graph $\EXT$, since there is exactly one pair of bands $b_1, b_2$ in $\Gamma$ with the $A(p_i)=A(p_i')$ and $A(q_i) = A(q_i')$ as in the proof of lemma \ref{lem:bandexactness}.  Therefore, the image of the restriction of the map $\Hom(P(M_\lambda)_\bullet, M_\lambda)$ to the vectors $v_{i_k}^{x_k} \otimes e_{i_k}^{x_k}$ is in the span of the vectors $v_{j_k'}^{y_k} \otimes e_{j_k'}^{y_k}$.  Again, as in the proof of lemma \ref{lem:bandexactness}, the restriction of said map to the aforementioned subspaces relative to the basis given above is 
\[
C = \begin{bmatrix}
-\lambda		&			&			&			&		&			&			&		&1		\\
\pm\lambda			&\pm1			&			&			&		&			&			&		&		\\
			&\pm1			&	\pm1		&			&		&			&			&		&		\\
			&			&	\pm1		&	\pm1		&		&			&			&		&		\\
			&			&			&	\pm1		&\ddots	&			&			&		&		\\
			&			&			&			&		&  	\pm1		&			&		&		\\
			&			&			&			&		&	\pm1		&	1		&		&		\\
			&			&			&			&		&			&	\ddots	&\pm1		&		\\
			&			&			&			&		&			&			&\pm1		&\pm1			\end{bmatrix}.
			\]
Recall that in each row there is exactly one positive and one negative entry.  Therefore, the sum of the last $n-1$ columns of this matrix is $\begin{bmatrix} 1 \\ \pm1 \\ 0 \\ \vdots \\ 0\end{bmatrix}$ where the sign of the second entry is opposite of the sign of $\pm \lambda$.  Therefore, the first column is in the span of the last $n-1$ columns.  Column reducing gives the matrix 
\begin{align*}
C = \begin{bmatrix}
	0	&			&			&			&		&			&			&		&1		\\
			0&\pm1			&			&			&		&			&			&		&		\\
			&\pm1			&	\pm1		&			&		&			&			&		&		\\
			&			&	\pm1		&	\pm1		&		&			&			&		&		\\
			&			&			&	\pm1		&\ddots	&			&			&		&		\\
			&			&			&			&		&  	\pm1		&			&		&		\\
			&			&			&			&		&	\pm1		&	1		&		&		\\
			&			&			&			&		&			&	\ddots	&\pm1		&		\\
			&			&			&			&		&			&			&\pm1		&\pm1			\end{bmatrix}.
			\end{align*}
The lower right $n-1 \times n-1$ minor is clearly non-zero, since it is a strictly lower triangular matrix, so this map has rank $n-1$, showing that the complex $\Hom(P(M_\lambda)_\bullet, M_\lambda)$ has exactly one dimensional homology at $\Hom(P(M_\lambda)_1, M_\lambda)$.  
\end{proof}

\noindent{\bf Proof of theorem \ref{thm:vanishingextbands}}
By lemma \ref{lem:surjectivestrings}, blocks corresponding to strings on $\EXT$ are surjective, and by lemma \ref{lem:bandexactness}, blocks corresponding to bands on $\EXT$ are surjective, so the homology of the complex
\[ \xymatrix{ \Hom(P(M_\lambda)_0, M_\mu) \ar[r] & \Hom(P(M_\lambda)_1, M_\mu) \ar[r] & \dotsc }\]
vanishes in the first degree.  \begin{flushright} $\square$\end{flushright}

\section{Higher Extension Groups}
The graphical representation given above can be used to calculate higher extension groups.  For each vertex $v_j^x \in \LEND \cup \REND$, let $X_{j,x}$ be the complex 
\begin{align*}
\xymatrix@C=10ex{ M_x \ar[r]^{[v_j^x]_1)} & M_{h([v_j^x]_1)} \ar[r]^{[v_j^x]_2} & M_{h([v_j^x]_2} \ar[r]^{[v_j^x]_3} & \dotsc}.
\end{align*}
Furthermore, if $v_j^x \in \ISO$, let $X_{j,x}^+$ be the complex
\begin{align*}
\xymatrix@C=10ex{ M_x \ar[r]^{[v_j^x]^+_1} & M_{h([v_j^x]^+_1)} \ar[r]^{[v_j^x]^+_2} & \dotsc},
\end{align*}
and analogously for $X_{j,x}^-$.  Let $h^i(X)$ be the dimension of the $i$-th homology space of the complex $X$.
\begin{corollary}
Let $\Gamma_{Q, c}(\beta, r, \epsilon)$ be an up-and-down graph for $(Q, c)$ a gentle string algebra.  Then 
\begin{align*}
\dim \Ext^i(M_{Q, c}(\beta, r)_\lambda, M_{Q, c}(\beta, r)_\mu) &= \sum\limits_{v_j^x \in \LEND \cup \REND} h^i(X_{j,x}) + \sum\limits_{v_j^x \in \ISO}\left( h^i(X_{j,x}^+) + h^i(X_{j,x}^-)\right).
\end{align*}
\end{corollary}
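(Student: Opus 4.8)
The plan is to apply the functor $\Hom_{\C Q/I_c}(-, M_\mu)$ to the projective resolution of $M_\lambda=M_{Q,c}(\beta,r)_\lambda$ built in Proposition~\ref{prop:projectiveresolution}, and to read the homology off the $\EXT$ graph. First I would use the natural identification $\Hom_{\C Q/I_c}(P_z,N)\cong N_z$: under it the cochain complex $\Hom(P(M_\lambda)_\bullet,M_\mu)$ becomes a complex assembled from the spaces $(M_\mu)_z$, whose differentials $\Hom(\delta(M_\lambda)_l,M_\mu)$ are given, by parts (iv)--(v) of Proposition~\ref{prop:projectiveresolution}, by the module actions of the arrows $[v_j^x]_l$ and $[v_i^y]_l^{\pm}$, and, by parts (i)--(iii), by the actions of the connecting paths $lp^{\pm}(v_i^y)$, $[v_i^y]_1A(p_i^y)$, and the two arrows at an isolated vertex. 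This is precisely the complex for which $\EXT$ is, by construction, the basis-graph recording (up to nonzero scalars) the nonzero entries of the differentials; hence $\dim\Ext^i(M_\lambda,M_\mu)=\dim H^i$ of this complex, to be computed from $\EXT$. In particular the spaces $M_z$ appearing in the complexes $X_{j,x}$, $X_{j,x}^{\pm}$ are the spaces $(M_\mu)_z$, with differentials the corresponding arrow actions on $M_\mu$.

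The next step is a direct-sum decomposition of this complex. By Lemmas~\ref{lem:stringandband}, \ref{lem:noisolatedvertices} and \ref{lem:stringandbandtypes}, $\EXT$ breaks into string and band components; every band component and every string of length $>1$ is confined between $\EXT(0)$ and $\EXT(1)$; and each vertex $v_j^x\boxtimes v_{i'}^{h([v_j^x]_l)}$ with $l\ge 1$ lies in exactly one edge (non-isolated by Lemma~\ref{lem:noisolatedvertices}, at most one by Lemma~\ref{lem:stringandband}), which joins it either to level $l-1$ or to level $l+1$ inside the same $v_j^x$-family. Consequently the complex splits, as a complex, into: one summand isomorphic to $X_{j,x}$ for each $v_j^x\in\LEND\cup\REND$; the pair $X_{j,x}^{+}\oplus X_{j,x}^{-}$ for each $v_j^x\in\ISO$; and a residual complex $D_\bullet$, supported in degrees $0$ and $1$ only, whose components are the band components of $\EXT$ together with the ``horizontal'' strings (those built from $\RCOR$-vertices of $\EXT(1)$, the down-connecting ends of the $\LEND\cup\REND\cup\ISO$-families, and the $S(\Gamma)$-vertices of $\EXT(0)$). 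Identifying the degree-zero term $M_x$ of $X_{j,x}$ with the appropriate source summand of $\Hom(P(M_\lambda)_0,M_\mu)$ uses that the maximal direct path $p_j^x$ through $v_j^x$ induces the needed isomorphism on the relevant subquotient, which is where Lemma~\ref{lem:rightleftconnection} enters.

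With the decomposition in hand the conclusion is quick. By Lemma~\ref{lem:surjectivestrings} every horizontal string of $\EXT$ contributes a surjective block, and by Lemma~\ref{lem:bandexactness} every band component contributes an isomorphism (this invokes the standing hypothesis of Theorem~\ref{thm:vanishingextbands}(a), that no band-scalar of $M_\lambda$ equals a band-scalar of $M_\mu$); hence $H^1(D_\bullet)=0$, and since $D_\bullet$ is concentrated in degrees $\le 1$ we get $H^i(D_\bullet)=0$ for every $i\ge 1$. Summing homology over the summands yields
\[
\dim\Ext^i\bigl(M_{Q,c}(\beta,r)_\lambda,M_{Q,c}(\beta,r)_\mu\bigr)=\sum_{v_j^x\in\LEND\cup\REND}h^i(X_{j,x})+\sum_{v_j^x\in\ISO}\bigl(h^i(X_{j,x}^{+})+h^i(X_{j,x}^{-})\bigr)
\]
for all $i\ge 1$; the case $i=0$ (where $\Ext^0=\Hom$) follows in the same manner once one checks that the remaining $H^0(D_\bullet)$ is absorbed, again by tracking the string-and-band components of $\EXT$ between levels $0$ and $1$.

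The step I expect to be the main obstacle is exactly this bookkeeping at the interface between $\EXT(0)$ and $\EXT(1)$: the complexes $X_{j,x}$ are defined intrinsically, starting from the level $x$ with first differential the single arrow $[v_j^x]_1$, whereas the matching piece of $\Hom(P(M_\lambda)_\bullet,M_\mu)$ starts from the level of the far source and has first differential the action of the \emph{composite} path $[v_j^x]_1A(p_j^x)$ (or of $lp^{\pm}(v_j^x)$). Showing these carry the same homology amounts to showing that the direct path $p_j^x$ acts as an isomorphism on the relevant subquotient, which I would extract from Lemma~\ref{lem:rightleftconnection}; for the degree-one part this is reinforced by the rank identities of Lemmas~\ref{note:corners} and \ref{lem:maximalrank}, which force $r([v_j^x]_1)+r([v_j^x]_2)=\beta_{h([v_j^x]_1)}$ and hence $h^1(X_{j,x})=0$, in agreement with Theorem~\ref{thm:vanishingextbands}(a). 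The remaining verifications -- that the differentials really split along the claimed summands, and that the scalars in the band blocks are as in Lemma~\ref{lem:bandexactness} -- are routine once the $\EXT$-graph description is set up.
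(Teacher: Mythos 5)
Your overall strategy is the paper's intended one (the paper offers no explicit proof for this corollary beyond pointing at the $\EXT$-graph machinery), and you correctly identify the ingredients: apply $\Hom(-,M_\mu)$ to $P(M_\lambda)_\bullet$, read the matrices off the $\EXT$-graph, and separate the ``family'' pieces from the band/horizontal-string pieces. However, the claim that the complex ``splits, as a complex'' into summands isomorphic to $X_{j,x}$ is an overstatement, and it is precisely the degree-$\le 1$ interface you flag that breaks it.

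Concretely, the differentials $\Hom(\delta(M_\lambda)_l, M_\mu)$ for $l\ge 1$ are genuinely block-diagonal by parts (iv)--(v) of Proposition~\ref{prop:projectiveresolution}: the level-$l$ and level-$(l{+}1)$ summands labelled by a fixed $v_j^x\in\LEND\cup\REND\cup\ISO$ map only to each other, and the $\RCOR$-block of level $1$ maps to zero. Hence for $i\ge 2$ the homology decomposes as a direct sum over families, and since the level-$(i{-}1),i,(i{+}1)$ slice of the $v_j^x$-family agrees verbatim with the corresponding slice of $X_{j,x}$, one gets $\Ext^i=\sum h^i(X_{j,x})+\sum(h^i(X_{j,x}^+)+h^i(X_{j,x}^-))$ with no need for the ``splitting'' of the whole complex. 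By contrast, $\Hom(\delta(M_\lambda)_0, M_\mu)$ is \emph{not} block-diagonal: an $\EXT(0)$-vertex $v_j^x\boxtimes v_{j'}^x$ for $v_j^x\in\LEND$ can lie in an edge going to its own family at level $1$ (property E3) \emph{and} in an edge going to an $\RCOR$- or $\REND$-block (properties E1/E2), and for $v_j^x\in\REND$ the degree-$0$ term $M_x$ of $X_{j,x}$ does not even occur as a summand of $\Hom(P(M_\lambda)_0,M_\mu)$ (since $v_j^x\notin S(\Gamma)$); instead one sees $M_{x'}$ for the far source $v_{j'}^{x'}$ with a composite-path differential. So the complex does not decompose into $X_{j,x}\oplus D_\bullet$, and the fix you gesture at (``$p_j^x$ acts as an isomorphism on the relevant subquotient'') is not the right framing and would need substantial work. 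For $i=1$ the statement is nevertheless consistent because both sides vanish: Lemmas~\ref{note:corners} and \ref{lem:maximalrank} force $r([v_j^x]_1)+r([v_j^x]_2)=\beta_{h([v_j^x]_1)}$, giving $h^1(X_{j,x})=0$, and Theorem~\ref{thm:vanishingextbands}(a) gives $\Ext^1=0$, so the equality holds without needing the splitting. Your final sentence about ``absorbing $H^0(D_\bullet)$'' to handle $i=0$ is misleading; the claimed identity does not appear to hold in degree $0$ (one can check in Example~\ref{example1} that $\dim\Hom(M_\lambda,M_\mu)=1$ while the right-hand side is larger), and the corollary should be read, as the section title ``Higher Extension Groups'' suggests, for $i\ge 1$. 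In short: your approach is the paper's, but replace the global direct-sum claim with the observation that the differentials in degrees $\ge 1$ are already block-diagonal (giving $i\ge 2$ immediately), handle $i=1$ by the separate vanishing argument, and drop the $i=0$ remark.
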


\subsection{Example}
We finish by exhibiting the $\EXT$ graph for example \ref{example1}.  Recall that we chose $\Theta(b)=v_1^{(6)}$ for the band component.  By proposition \ref{prop:projectiveresolution}, the projective resolution of the representation in the example is given by 
\[
\xymatrix@C=15ex{M_\lambda & \ar[l] P_1^3 \oplus P_4^2  & P_2 \oplus P_3 \oplus P_5^2\oplus P_6 \ar[l]_{\delta_0}  & \ar[l]_{\delta_1} P_3}
\] 
where 
\[
\delta_0 = \begin{bmatrix} -r_1 & 0 &0&0 & -\lambda b_2g_1\\0 &0&-g_1 &0& p_2r_1\\ 0 & 0 & 0 & g_1&0\\  p_1 & -g_2b_1 &0 &0& 0\\ 0 & r_2p_1 & b_1 & 0&0 \end{bmatrix} \hspace{1in} 
\delta_1 = \begin{bmatrix} 0 \\ g_2 \\ 0 \\ 0 \\ 0 \end{bmatrix}\]
The associated $\EXT$ graph is obtained by applying $\Hom(-, M_\mu)$ to the resolution, so we have the complex:
\begin{align*}
\xymatrix@C=15ex{
(M_\mu)_1^3 \oplus (M_\mu)_4^2 \ar[r]^<<<<<<<<<<{\Hom(\delta_0, M_\mu)} & (M_\mu)_2 \oplus (M_\mu)_3 \oplus (M_\mu)^2_5 \oplus (M_\mu)_6 \ar[r]^<<<<<<<<<<<<<{\Hom(\delta_1, M_\mu)} & (M_\mu)_3}\end{align*}
The $\EXT$ graph is depicted below, with the vertices lying in a cyclic component of the graph boxed.  
\begin{align*}
\includegraphics{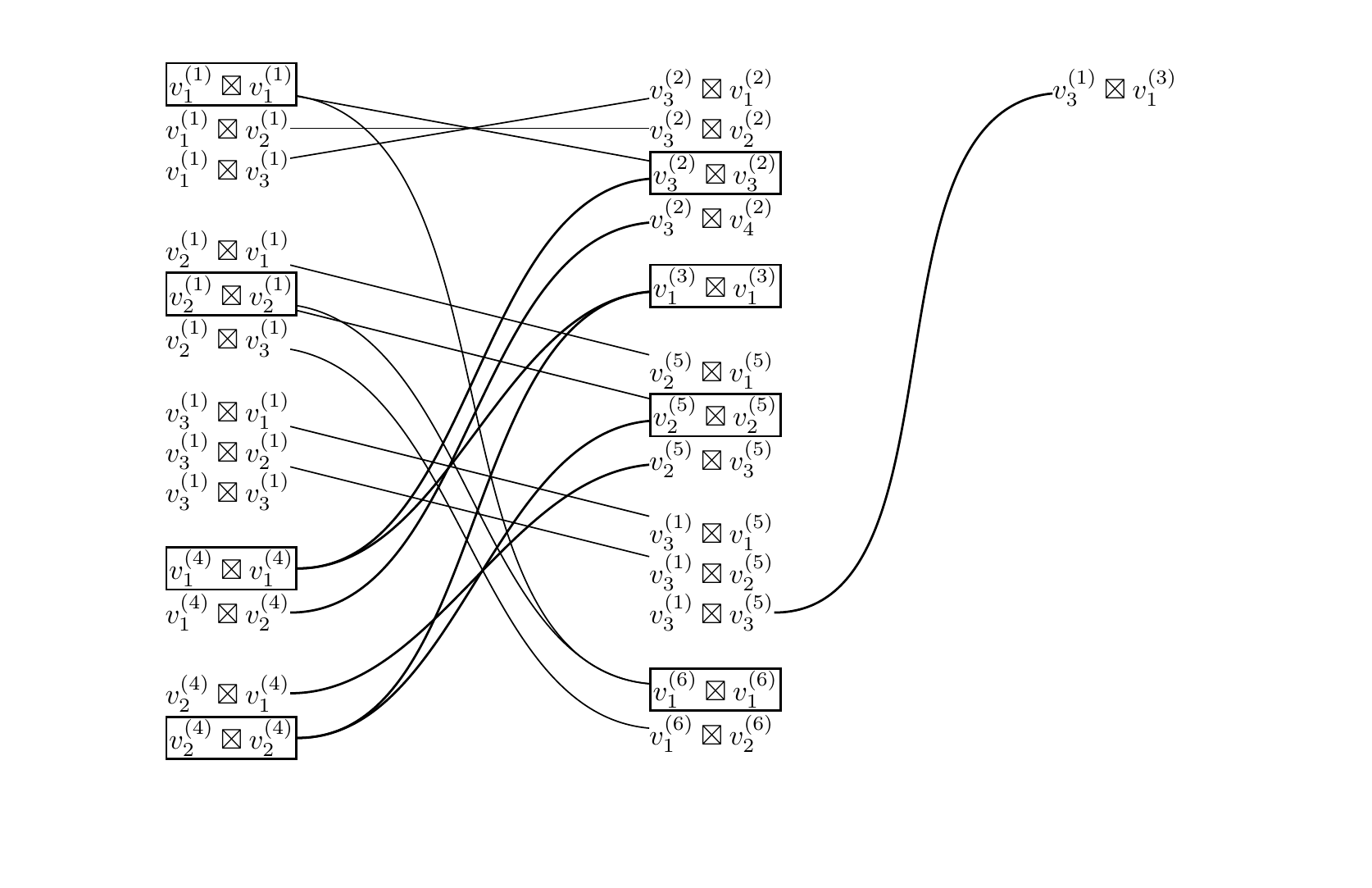}
\end{align*}

\newpage

\end{document}